\documentclass[final,1p]{elsarticle}
\usepackage{fancyhdr,graphicx}
\usepackage{amsmath,amssymb}
\allowdisplaybreaks[3]
\usepackage{latexsym}
\usepackage{color}
\usepackage{subfig}
\usepackage{algorithm}
\usepackage{algorithmic}
\usepackage{epstopdf}
\usepackage{multirow}
\usepackage{longtable}
\usepackage{cases}
\usepackage[colorlinks,linkcolor=blue,hyperindex]{hyperref}

\usepackage[pagewise]{lineno}%\linenumbers
\newtheorem{theorem}{Theorem}
\newtheorem{lemma}{Lemma}
\newtheorem{remark}{Remark}

\newtheorem{definition}{Definition}

\numberwithin{equation}{section}
\usepackage{epsfig}
\usepackage{subfig}
\journal{ }
\begin{document}
\begin{frontmatter}

% \title[short title]{TITLE}
\title{Inverse Source Problems for a Class of Fractional Elliptic Equations with Singular Coefficients}

\author[GMUSCI]{Zewen Wang}
\ead{zwwang@ecut.edu.cn; zwwang6@163.com}

\author[ECUT,GMUSCI]{Liang Guo}
\ead{2714901483@qq.com}

\author[GMUSCI,GMUAI]{Shufang Qiu \texorpdfstring{\corref{cor1}}{}}% 
\ead{qiushufang@gzmtu.edu.cn; shfqiu@163.com}

\author[NUIST]{Bin Wu}

\cortext[cor1]{Corresponding author.}

\address[GMUSCI]{School of Arts and Sciences, Guangzhou Maritime University, Guangzhou 510725, P. R. China}

\address[ECUT]{School of Science, East China University of Technology, Nanchang, Jiangxi, 330013, P. R. China}

\address[GMUAI]{School of Artificial Intelligence, Guangzhou Maritime University, Guangzhou 510725, P. R. China}

\address[NUIST]{School of Mathematics and Statistics, Nanjing University of
Information Science
and Technology, Nanjing 210044, P. R. China}

% ----------------------------------------------------------------
\begin{abstract}
An inverse source problem for a class of fractional elliptic equations with singular coefficients is investigated in this paper. For the corresponding direct problem, a formal solution is derived and the well-posedness of the solution is established. For the inverse problem, a H\"older-type conditional stability estimate is obtained in a Hilbert scale associated with exponential operators. Based on this stability framework, two regularization methods are proposed for reconstructing the unknown source term: the exponential-type Tikhonov regularization method and the exponential quasi-boundary value regularization method. Convergence estimates for the regularized solutions are derived under both a priori and a posteriori choices of the regularization parameter. In addition, finite-dimensional spectral approximation results show that the proposed methods are also applicable to general square-integrable source terms, without requiring the exact source to satisfy an exponential-type source condition. Numerical experiments demonstrate that the proposed methods provide stable and accurate reconstructions for both smooth and piecewise smooth sources even under low signal-to-noise ratio conditions. 
\end{abstract}

\begin{keyword}
fractional elliptic equation; singular coefficients; inverse source problem; exponential Tikhonov regularization; exponential quasi-boundary value regularization; convergence estimate 
\end{keyword}

\end{frontmatter}

%%%%%%%%%%%%%%%%%%%%%%%%%%%%%%%%%%%%%%%%%%%%
%% MAINMATTER
%%%%%%%%%%%%%%%%%%%%%%%%%%%%%%%%%%%%%%%%%%%%

%==============================================
\section{Introduction} \label{sec:introduction}
%==============================================
Fractional calculus has grown into an important branch of modern analysis. The books by Podlubny \cite{ref01} and by Kilbas, Srivastava, and Trujillo \cite{ref02} give clear and systematic introductions to fractional derivatives, fractional differential equations, the main tools for solving them, and many applications. In particular, \cite{ref02} stresses the importance of fractional elliptic (degenerate/weighted) structures and shows how algebraic weights can change well-posedness and the form of solutions in a fundamental way. A simple but representative model is 
$\partial_{0,x}^{\alpha} y(x) - \lambda x^{\beta} y(x) = 0$, 
whose behavior is controlled by the parameter $\beta$. When $\beta=0$, the model can be extended to a nonhomogeneous equation with a source term $f(x)$, and standard tools such as the Laplace transform and convolution formulas work well. When $\beta\neq 0$, however, the factor $x^{\beta}$ makes these tools harder to use, and adding a source term becomes much more delicate. 

Fractional elliptic and mixed-type equations appear in many scientific and engineering settings. Examples include subsonic and transonic aerodynamics \cite{ref03}, aspects of explosion dynamics and nonlinear waves \cite{ref04}, and microscopic problems in Fermi-liquid theory where Keldysh-type ideas are relevant \cite{ref05}. Within this research field, several classical mixed-type models---the Tricomi \cite{ref06}, Gellerstedt \cite{ref07}, and Keldysh \cite{ref08} equations---have inspired extensive research. On the theory side, results on global existence, exact solutions in special geometries, and nonexistence for related models can be found in \cite{ref09,ref10,ref11,ref12}. More recently, well-posedness for fractional versions of Tricomi–Gellerstedt–Keldysh-type problems was established in \cite{ref13}. Inverse and regularization questions for fractional elliptic models have also received growing attention; see \cite{DjemouiS01,ZhangX01}.

Motivated by the above developments, we study a fractional elliptic equation with singular coefficients:
\begin{equation}\label{equ101}
\frac{1}{x^{\beta}}\,
\partial_{0,x}^{\alpha}\!\left(
\frac{1}{x^{\beta}}\,
\partial_{0,x}^{\alpha} u(x,y)
\right)
+ L u(x,y) = g(y),
\end{equation}
where $\alpha>0$, $\beta\in\mathbb{R}$, $-L$ is a symmetric uniformly elliptic operator acting on the transverse variable $y$, and $g$ depends only on $y$. The weights $x^{-\beta}$ encode a singular/degenerate structure similar to Keldysh-type operators, and hence \eqref{equ101} naturally belongs to the fractional Tricomi-Gellerstedt-Keldysh family \cite{ref06,ref07,ref08,ref13}. It should be emphasized that the weighted fractional structure in \eqref{equ101} is different from the equation considered in \cite{ref13}, where the singular coefficient appears as a multiplicative factor before the elliptic operator. Therefore, the separated fractional ordinary differential equation and the corresponding fundamental solution representation require a separate derivation in the present work.

In particular, when $\alpha=1$ and $L=\partial^2/\partial y^2$, equation \eqref{equ101} becomes
\[
x^{-2\beta}u_{xx}(x,y)-\beta x^{-2\beta-1}u_x(x,y)+u_{yy}(x,y)=g(y),
\]
or equivalently,
\[
u_{xx}(x,y)-\beta x^{-1}u_x(x,y)+x^{2\beta}u_{yy}(x,y)
=
x^{2\beta}g(y),
\]
which can be regarded as a variant of the Gellerstedt-type equation with an additional first-order singular term. We first prove the well-posedness of the direct problem under suitable assumptions on $(\alpha,\beta)$, the domain, and the boundary data. We then turn to the inverse source problem for \eqref{equ101}, aiming to recover the forcing term from interior observations.

The remainder of this paper is organized as follows.  
In Section 2, we formulate the inverse source problem under consideration and introduce the necessary preliminaries, including the required fractional operators and functional settings.  
Section 3 is devoted to proving the well-posedness of the corresponding direct problem.  
In Section 4, we investigate the inverse problem, show that it is ill-posed in the sense of Hadamard, and derive a conditional stability estimate. Section 5 proposes two regularization methods for reconstructing the source term: the exponential-type Tikhonov regularization method and the exponential quasi-boundary value regularization method. Convergence results for the corresponding regularized solutions are established under both a priori and a posteriori parameter choice strategies. Section 6 presents two numerical examples to illustrate the effectiveness of the proposed method: one with a smooth source function and the other with a piecewise smooth source. Finally, Section 6 concludes the paper.

%==============================================
\section{Problem formulation and Preliminaries} \label{sec:Preliminary}
%==============================================
In this section we state the model and data, and gather the main analytical ingredients needed later. We specify the domain and the symmetric uniformly elliptic operator $-L$, introduce the Caputo derivative and the Riemann–Liouville integral, and define the singularly weighted fractional operators that enter our equation. We also recall properties of the three-parameter Mittag–Leffler (Kilbas–Saigo) function \cite{Kilbas1995}, together with exponential-type operators and the Hilbert spaces they generate. These preliminaries set the stage for the precise formulation of the boundary value problem and for the analysis in Sections 3–5.

\subsection{Problem formulation}
Let $\Omega\subset \mathbb{R}^{N}(N\geq1)$ be a bounded connected domain with smooth boundary, denoted by $\partial\Omega$. Let $\alpha\in(0,1]$ be a constant, and $\partial_{0,x}^{\alpha}$ denote the $\alpha$-order Caputo fractional derivative with respect to the variable $x$ \cite{ref01}  defined by
\begin{equation}\label{equa101}
\partial_{0,x}^\alpha u(x,y)=
\begin{cases}
\frac1{\Gamma(1-\alpha)}\int_0^x (x-s)^{-\alpha}\partial_su(s,y)ds, & \alpha\in(0, 1),\\
\partial_x u(x,y), & \alpha = 1
\end{cases}
\end{equation}
where $\partial_x u(x,y)$ denotes the first-order partial derivative of $u(x,y)$ with respect to $x$. 
$I^\alpha_{0,x}$ denotes the $\alpha$-order Riemann-Liouville fractional integral with respect to the variable $x$ \cite{ref01} defined by
\begin{equation}\label{equa102}
I^{\alpha}_{0,x}u(x,y)=\frac{1}{\Gamma(\alpha)}\int_{0}^{x}(x-t)^{\alpha-1}u(t,y)dt. 
\end{equation}
Thus, we have $\partial_{0,x}^\alpha u(x,y)=I^{1-\alpha}_{0,x} \partial_x u(x,y)$. For $x\in(0,+\infty)$ and $\beta\geq 0$, we further define
$$D_{x}^{\alpha}=\frac{1}{x^\beta}\partial_{0,x}^{\alpha},\; D_{x}^{2\alpha}=D_{x}^{\alpha}D_{x}^{\alpha}=\frac{1}{x^\beta}\partial_{0,x}^{\alpha}\frac{1}{x^\beta}\partial_{0,x}^{\alpha}. $$

In this paper, we consider the boundary value problem for a nonhomogeneous fractional elliptic equation with singular coefficients:
\begin{equation}\label{equa103}
\begin{cases}
D_x^{2\alpha}u(x,y)+Lu(x,y)=g(y), & (x,y)\in(0,\infty)\times\Omega,\\
u(x,y)=0, & (x,y)\in(0,\infty)\times\partial\Omega,\\
u(0,y)=f(y), & y\in\Omega,\\
\displaystyle \lim\limits_{x\to\infty}\|u(x,\cdot)\|_{L^2(\Omega)}<\infty. &
\end{cases}
\end{equation}
Here $\alpha\in(0,1]$, $\beta\geq 0$, and
$
-L:\; H^2(\Omega)\cap H_0^1(\Omega)\to L^2(\Omega)
$
is symmetric and uniformly elliptic. Let $(\lambda_k,\varphi_k)$ be the eigenpairs of $-L$:
\[
-L\varphi_k=\lambda_k\varphi_k,\quad k=1,2,\ldots.
\]
By the spectral theory of symmetric uniformly elliptic operators, the eigenvalues satisfy
\[
0<\lambda_1 \leq \lambda_2 \leq \cdots \leq \lambda_k \leq \cdots,
\quad \lim_{k\to\infty}\lambda_k = +\infty,
\]
and the eigenfunctions $\{\varphi_k\}_{k=1}^\infty$ form a complete orthonormal basis of $L^2(\Omega)$.

The inverse problem we study is to reconstruct the unknown source $g(y)$ in \eqref{equa103} from the model and a measurement at $x=l$:
\begin{equation}\label{equa104}
u(l,y)=h^\delta(y), \quad y\in\Omega,
\end{equation}
where $h^\delta$ is a noisy observation of the exact (noise-free) trace $h(y)=u(l,y)$. The noise level $\delta>0$ is assumed known and satisfies
\[
\|h^\delta - h\|_{L^2(\Omega)} \le \delta.
\]
Our analysis first establishes well-posedness of the direct problem and then provides a stable framework for recovering $g$ from the data $h^\delta$.

Since the solution representation of the direct problem \eqref{equa103} involves the three-parameter Mittag-Leffler function, also known as the Kilbas-Saigo function, the recovery of the source term $g(y)$ from the noisy trace data $h^\delta(y)$ is highly unstable. To overcome this ill-posedness, we first develop an exponential-type Tikhonov regularization method inspired by \cite{ref16,Yu2023}, where a similar exponential-type strategy was used to stabilize inverse source problems for fractional diffusion equations. Furthermore, motivated by the modified quasi-boundary value regularization method in \cite{Hao2019,Wei2022,ref20}, we propose an exponential quasi-boundary value regularization method for the present fractional elliptic model with singular coefficients. To the best of our knowledge, the proposed exponential quasi-boundary value regularization method has not been seen in the literature. For these two methods, we derive convergence estimates under both a priori and a posteriori parameter choice rules. In particular, the Morozov discrepancy principle is employed as the a posteriori strategy, which is more suitable for practical applications where the regularity information of the exact source term is usually unavailable.	

\subsection{Preliminaries}
\begin{definition}\label{def1}
The three-parameter Mittag-Leffler function $E_{\alpha, m, n}(z)$ is defined as
\begin{equation}\label{equa201}
E_{\alpha,m,n}(z)=1+\sum_{k=1}^{\infty}\prod_{j=0}^{k-1}\frac{\Gamma(\alpha(jm+n)+1)}{\Gamma(\alpha(jm+n+1)+1)}z^{k},\; z\in\mathbb{C},
\end{equation}
where $\alpha, m>0$, $n>-\frac1\alpha.$
\end{definition}

\begin{lemma}\label{lemma1}
For $\alpha\in(0,1]$, $m>0$, $m=1+n$, $z=\mu x^{\alpha m}$, we have
\begin{equation}\label{equa202}
\partial_{0,x}^{\alpha} E_{\alpha,m,n}(z)={\mu}{x^{\alpha m-\alpha}} E_{\alpha,m,n}(z).
\end{equation}
\end{lemma}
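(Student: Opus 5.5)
We differentiate the series defining $E_{\alpha,m,n}(\mu x^{\alpha m})$ term by term, using the Caputo derivative of a power function, and then show that the product coefficients telescope so that the differentiated series is exactly $\mu x^{\alpha m-\alpha}$ times the original one.

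\textbf{Setup.} Write $c_0=1$ and, for $k\ge1$,
\[
c_k=\prod_{j=0}^{k-1}\frac{\Gamma(\alpha(jm+n)+1)}{\Gamma(\alpha(jm+n+1)+1)},
\]
so that
\[
E_{\alpha,m,n}(\mu x^{\alpha m})=\sum_{k\ge0}c_k\mu^k x^{\alpha m k}.
\]

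\textbf{Step 1: derivatives of the powers.} The constant term is annihilated by $\partial^\alpha_{0,x}$. For $k\ge1$ the exponent $\gamma=\alpha mk$ is positive, and the standard formula (from the Beta integral applied to $I^{1-\alpha}_{0,x}\partial_x x^\gamma$) gives
\[
\partial^\alpha_{0,x}x^{\gamma}=\frac{\Gamma(\gamma+1)}{\Gamma(\gamma+1-\alpha)}\,x^{\gamma-\alpha}.
\]
For $\alpha=1$ this is just ordinary differentiation.

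\textbf{Step 2: reindex.} Set $k=i+1$ with $i\ge0$. Using $m=1+n$,
\[
\alpha mk-\alpha=\alpha m i+\alpha m-\alpha=\alpha(mi+n).
\]
Hence the $k$-th term of the derivative equals
\[
c_{i+1}\,\mu^{i+1}\,\frac{\Gamma(\alpha(mi+m)+1)}{\Gamma(\alpha(mi+n)+1)}\;x^{\alpha(mi+n)}.
\]

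\textbf{Step 3: telescoping (the key identity).} The last factor of the product $c_{i+1}$, namely the $j=i$ factor, is
\[
\frac{\Gamma(\alpha(im+n)+1)}{\Gamma(\alpha(im+n+1)+1)}.
\]
Since $n+1=m$, its denominator is $\Gamma(\alpha(im+m)+1)$, which is exactly the numerator of the Caputo factor. So the Caputo factor cancels this last factor, and
\[
c_{i+1}\,\frac{\Gamma(\alpha(mi+m)+1)}{\Gamma(\alpha(mi+n)+1)}=c_i .
\]
Also $x^{\alpha(mi+n)}=x^{\alpha m-\alpha}\,x^{\alpha m i}$. Summing over $i\ge0$ therefore gives
\[
\partial^\alpha_{0,x}E_{\alpha,m,n}(\mu x^{\alpha m})=\mu\, x^{\alpha m-\alpha}\sum_{i\ge0}c_i\mu^i x^{\alpha m i}=\mu\, x^{\alpha m-\alpha}E_{\alpha,m,n}(\mu x^{\alpha m}),
\]
which is the claim.

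\textbf{Step 4: justify term-by-term differentiation (main obstacle).} The algebra above is routine; the technical point is interchanging $\partial^\alpha_{0,x}=I^{1-\alpha}_{0,x}\partial_x$ with the infinite sum. We first show the series is entire. By the asymptotics $\Gamma(s+a)/\Gamma(s+b)\sim s^{a-b}$, each factor of $c_k$ behaves like $(\alpha jm)^{-\alpha}$, so $c_{k+1}/c_k\to0$. Hence the power series in $z$ converges for all $z$, and so do the formally differentiated series.

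Then, on any compact $[0,X]$, the series $\sum_k c_k\mu^k\,\alpha mk\,x^{\alpha mk-1}$ converges uniformly once $k$ is large enough that $\alpha mk-1\ge0$. For $\alpha m<1$, finitely many terms remain that are only integrable near $0$, with $x^{\alpha mk-1}$ and $\alpha mk>0$. Their sum is dominated by an integrable function of the form
\[
C\,x^{\alpha m-1}\exp\bigl(C x^{\alpha m}\bigr).
\]
So $\partial_x$ passes inside the sum in $L^1(0,X)$, and $I^{1-\alpha}_{0,x}$ is a bounded operator on $L^1$ with a positive kernel. Dominated convergence therefore justifies exchanging $I^{1-\alpha}_{0,x}$ with the sum.

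The only case needing care is $n\in(-1/\alpha,0)$, where the exponents $\alpha(mi+n)$ can be negative. Even then $\alpha(mi+n)+1>0$ holds, so the Gamma factors are finite and every term is locally integrable, and the argument above goes through.
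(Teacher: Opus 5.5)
Your proof is correct and follows essentially the same route as the paper: term-by-term Caputo differentiation of the series via the Beta integral, an index shift, and cancellation of the factor $\Gamma(\alpha mk+1)/\Gamma(\alpha mk+1-\alpha)$ against the last ($j=k-1$) factor of the product using $m=1+n$. Beyond the paper, your Step 4 justifies interchanging $I^{1-\alpha}_{0,x}\partial_x$ with the infinite sum, and your power formula covers $\alpha=1$ directly, whereas the paper's intermediate $\Gamma(1-\alpha)$ factors are formally undefined there; the paper leaves both points implicit.
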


\textbf{Proof}. By direct calculation, we obtain
\begin{align*}&\partial_{0,x}^{\alpha}E_{\alpha,m,n}(\mu x^{\alpha m})=\partial_{0,x}^{\alpha}\left[1+\sum_{k=1}^{\infty}\prod_{j=0}^{k-1}\frac{\Gamma(\alpha(jm+n)+1)}{\Gamma(\alpha(jm+n+1)+1)}{\mu}^{k}x^{(\alpha m)k}\right]\\	&=I_{0,x}^{1-\alpha}\frac{d}{dx}\left[1+\sum_{k=1}^{\infty}\prod_{j=0}^{k-1}\frac{\Gamma(\alpha(jm+n)+1)}{\Gamma(\alpha(jm+n+1)+1)}{\mu}^{k}x^{\alpha mk}\right]\\		&=\sum_{k=1}^{\infty}\prod_{j=0}^{k-1}\frac{\Gamma(\alpha(jm+n)+1)}{\Gamma(\alpha(jm+n+1)+1)}{\mu}^{k}\frac{\alpha m k}{\Gamma(1-\alpha)}\int_{0}^{x}(x-t)^{-\alpha}t^{\alpha m k-1}dt\\	&=\sum_{k=1}^{\infty}\prod_{j=0}^{k-1}\frac{\Gamma(\alpha(jm+n)+1)}{\Gamma(\alpha(jm+n+1)+1)}{\mu}^{k}\frac{\alpha m k}{\Gamma(1-\alpha)}\frac{\Gamma(1-\alpha)\Gamma(\alpha m k)}{\Gamma(\alpha m k+1-\alpha)}x^{(\alpha m)k-\alpha}\\	
&={\mu}{x^{\alpha m-\alpha}}\sum_{k=1}^{\infty}\prod_{j=0}^{k-1}\frac{\Gamma(\alpha(jm+n)+1)}{\Gamma(\alpha(jm+n+1)+1)}\frac{\Gamma(\alpha m k+1)}{\Gamma(\alpha m k+1-\alpha)}{\mu}^{k-1} x^{(\alpha m)(k-1)}\\
& = {\mu}{x^{\alpha m-\alpha}}\left[1+\sum_{k=1}^{\infty}\prod_{j=0}^{k-1}\frac{\Gamma(\alpha(jm+n)+1)}{\Gamma(\alpha(jm+n+1)+1)}{\mu}^{k}x^{(\alpha m)k}\right]. 
\end{align*}
The last equality follows from the fact that, for $m = 1 + n$, we have
\[
\alpha\bigl((k-1)m + n + 1\bigr) + 1 = \alpha mk + 1,
\quad \text{and} \quad
\alpha\bigl((k-1)m + n\bigr) + 1 = \alpha mk - \alpha + 1.
\]
Hence, we obtain
\[
\partial_{0,x}^{\alpha} E_{\alpha,m,n}(\mu x^{\alpha m})
= \mu\, x^{\alpha m - \alpha} \, E_{\alpha,m,n}(\mu x^{\alpha m}),
\]
which completes the proof.
\hfill $\Box$

\begin{lemma}\label{lemma2}
Let $\alpha \in (0,1]$, $m>0$, and $q = n+1-m > 0$. Then the following identity holds:
\begin{equation}\label{equa203}
\partial_{0,x}^{\alpha} \left(x^{\alpha q} E_{\alpha,m,n}(\mu x^{\alpha m}) \right)
=
\mu\, x^{\alpha n} E_{\alpha,m,n}(\mu x^{\alpha m})
+
\frac{\Gamma(\alpha q + 1)}{\Gamma(\alpha q + 1 - \alpha)}\, x^{\alpha (n - m)}.
\end{equation}
\end{lemma}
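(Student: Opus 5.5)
The plan is to argue exactly as in the proof of Lemma \ref{lemma1}: expand the Kilbas--Saigo function into its power series, apply the Caputo derivative term by term using the power rule, and then re-index the resulting series so that it matches the series of $E_{\alpha,m,n}$ again. Write
\[
c_0=1,\qquad c_k=\prod_{j=0}^{k-1}\frac{\Gamma(\alpha(jm+n)+1)}{\Gamma(\alpha(jm+n+1)+1)},\quad k\ge1,
\]
so that
\[
x^{\alpha q}E_{\alpha,m,n}(\mu x^{\alpha m})=\sum_{k\ge0}c_k\mu^k x^{p_k},\qquad p_k=\alpha(q+mk).
\]
Since $q>0$, every exponent satisfies $p_k\ge \alpha q>0$. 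The power rule therefore applies to each term:
\[
\partial_{0,x}^{\alpha}x^{p}=I_{0,x}^{1-\alpha}\bigl(p\,x^{p-1}\bigr)=\frac{\Gamma(p+1)}{\Gamma(p+1-\alpha)}x^{p-\alpha},\qquad p>0.
\]
This follows from the Beta integral, as in Lemma \ref{lemma1}; for $\alpha=1$ it is the ordinary derivative.

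\textbf{Step 1: the $k=0$ term.} It contributes
\[
\frac{\Gamma(\alpha q+1)}{\Gamma(\alpha q+1-\alpha)}x^{\alpha q-\alpha}.
\]
Because $q-1=n-m$, this equals $\frac{\Gamma(\alpha q+1)}{\Gamma(\alpha q+1-\alpha)}x^{\alpha(n-m)}$, which is exactly the inhomogeneous term in \eqref{equa203}.

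\textbf{Step 2: the terms with $k\ge1$.} The key identity is
\[
q+mk=n+1+(k-1)m .
\]
It gives $p_k+1=\alpha((k-1)m+n+1)+1$ and $p_k+1-\alpha=\alpha((k-1)m+n)+1$. Hence the gamma ratio produced by the power rule is the reciprocal of the last factor in the product defining $c_k$, that is,
\[
c_k\,\frac{\Gamma(p_k+1)}{\Gamma(p_k+1-\alpha)}=c_{k-1}.
\]
The exponent becomes $p_k-\alpha=\alpha n+\alpha m(k-1)$. So the $k$-th term equals $\mu\,x^{\alpha n}\,c_{k-1}\mu^{k-1}x^{\alpha m(k-1)}$. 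Summing over $k\ge1$ and shifting the index yields $\mu x^{\alpha n}E_{\alpha,m,n}(\mu x^{\alpha m})$, which completes \eqref{equa203}.

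\textbf{Main obstacle: justifying the termwise application of $\partial_{0,x}^{\alpha}=I^{1-\alpha}_{0,x}\frac{d}{dx}$.} The bookkeeping above is routine; the real issue is interchanging the operator with the infinite sum. I would first show that $E_{\alpha,m,n}$ is entire. Each factor $\Gamma(\alpha(jm+n)+1)/\Gamma(\alpha(jm+n)+\alpha+1)$ behaves like $(\alpha jm)^{-\alpha}$, so $c_k$ decays faster than any geometric sequence. It follows that the differentiated series $\sum_k c_k\mu^k p_k x^{p_k-1}$ converges uniformly on $[\varepsilon,X]$. Near $0$ it is dominated by $C\,x^{\alpha q-1}$, which is integrable since $\alpha q>0$. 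Dominated convergence then allows the exchange with both $\frac{d}{dx}$ and the Riemann--Liouville integral $I^{1-\alpha}_{0,x}$, exactly as was implicitly used in Lemma \ref{lemma1}.
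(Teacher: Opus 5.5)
Your proof is correct and takes the same route as the paper: expand in the Kilbas--Saigo series, apply the Caputo power rule termwise, and take the $k=0$ term as the inhomogeneous part. For $k\ge1$, the identity $q+mk=n+1+(k-1)m$ cancels the last factor of the product defining $c_k$, and shifting the index recovers $\mu x^{\alpha n}E_{\alpha,m,n}(\mu x^{\alpha m})$. Your justification of the termwise interchange is something the paper leaves implicit; it uses the superexponential decay of $c_k$ and the integrable majorant $C x^{\alpha q-1}$ near $0$.
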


\textbf{Proof}. 
Using the series representation \eqref{equa201}, we expand
\[
x^{\alpha q} E_{\alpha,m,n}(\mu x^{\alpha m})
= x^{\alpha q}
+ \sum_{k=1}^{\infty}
\left(
\prod_{j=0}^{k-1}
\frac{\Gamma(\alpha(jm+n)+1)}{\Gamma(\alpha(jm+n+1)+1)}
\right)
\mu^k x^{\alpha mk + \alpha q}.
\]
Applying the Caputo derivative $\partial_{0,x}^{\alpha} = I_{0,x}^{1-\alpha} \frac{d}{dx}$ term by term, we obtain
\begin{align*}
\partial_{0,x}^{\alpha}\!&\left(x^{\alpha q} E_{\alpha,m,n}(\mu x^{\alpha m})\right)
= \frac{\Gamma(\alpha q + 1)}{\Gamma(\alpha q + 1 - \alpha)}\, x^{\alpha q - \alpha}  \\
&\quad
+ \sum_{k=1}^{\infty}
\left(
\prod_{j=0}^{k-1}
\frac{\Gamma(\alpha(jm+n)+1)}{\Gamma(\alpha(jm+n+1)+1)}
\right)
\frac{\Gamma(\alpha(mk+q)+1)}{\Gamma(\alpha(mk+q)+1-\alpha)} \mu^k x^{\alpha(mk+q)-\alpha}.
\end{align*}
Using $q = n + 1 - m$, we have:
\[
\alpha(mk + q) + 1 = \alpha m(k-1) + \alpha (n+1) + 1,
\quad
\alpha(mk + q) + 1 - \alpha = \alpha mk + \alpha(n - m) + 1.
\]
Factor out $\mu x^{\alpha n}$, shift the summation index, and recognize again the series of $E_{\alpha,m,n}$ to obtain
\[
\partial_{0,x}^{\alpha} \left(x^{\alpha q} E_{\alpha,m,n}(\mu x^{\alpha m}) \right)
= 
\mu\, x^{\alpha n} E_{\alpha,m,n}(\mu x^{\alpha m})
+
\frac{\Gamma(\alpha q + 1)}{\Gamma(\alpha q + 1 - \alpha)}\, x^{\alpha(n - m)}.
\]
This completes the proof.\hfill $\Box$.

In particular, setting $n=m$ (so $q=1$) in Lemma \ref{lemma2}, we have $x^{\alpha(n-m)}=1$ and
\[
\frac{\Gamma(\alpha q+1)}{\Gamma(\alpha q+1-\alpha)}
=\frac{\Gamma(\alpha+1)}{\Gamma(1)}
=\Gamma(\alpha+1).
\]
Hence,
\[
\partial_{0,x}^{\alpha}\!\bigl(x^{\alpha}E_{\alpha,m,m}(\mu x^{\alpha m})\bigr)
=\mu\,x^{\alpha m}\,E_{\alpha,m,m}(\mu x^{\alpha m})
+\Gamma(\alpha+1),
\]
which is exactly the asserted identity when $n=m$.

\begin{lemma}\label{lemma5}
Let $\alpha\in(0,1]$, $m>0$, $n\geq m-1$, $z\in \mathbb{R}$ and $z\geq 0$. For the three-parameter Mittag-Leffler function $E_{\alpha,m,n}(z)$, the following inequalities hold \cite{ref17}:
\begin{equation}\label{equa205}
\frac{1}{1+\frac{\Gamma(1+\alpha(n-m))}{\Gamma(1+\alpha(n-m+1))}z}
\;\leq\; E_{\alpha,m,n}(-z)\;\leq\;
\frac{1}{1+\frac{\Gamma(1+\alpha n)}{\Gamma(1+\alpha(n+1))}z}, 
\end{equation}
\begin{equation}\label{equ2002}
E_{\alpha,m,n}(z)\;\geq\;\frac{\Gamma(\alpha n+1)}{\Gamma(\alpha(n+1)+1)}\,z .
\end{equation}		
\end{lemma}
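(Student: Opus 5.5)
The lower bound \eqref{equ2002} is immediate and I would dispose of it first. For $z\ge 0$ every term of the series \eqref{equa201} is nonnegative, since all the Gamma factors are positive when $n\ge m-1$. Discarding all terms except the $k=1$ term, and also dropping the leading $1$, gives $E_{\alpha,m,n}(z)\ge \frac{\Gamma(\alpha n+1)}{\Gamma(\alpha(n+1)+1)}z$.

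The two-sided estimate \eqref{equa205} is the substantive part. I would follow the standard route of \cite{ref17}: turn Lemma \ref{lemma1}-type identities into a Volterra integral equation, then compare with a monotone kernel. Write $c_j=\frac{\Gamma(\alpha(jm+n)+1)}{\Gamma(\alpha(jm+n+1)+1)}$, so that $E_{\alpha,m,n}(z)=\sum_k \big(\prod_{j<k}c_j\big)z^k$. The key structural fact is a ratio monotonicity. Since $\Gamma(s+1)/\Gamma(s+\alpha+1)$ is decreasing in $s$ (log-convexity of $\Gamma$), the sequence $c_j$ is decreasing in $j$, with $c_0=\frac{\Gamma(1+\alpha n)}{\Gamma(1+\alpha(n+1))}$. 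One also has the comparison constant $c_{-1}=\frac{\Gamma(1+\alpha(n-m))}{\Gamma(1+\alpha(n-m+1))}\ge c_0$, which is well defined because $n-m\ge -1$ gives $1+\alpha(n-m)\ge 1-\alpha\ge0$ (the borderline case $\alpha=1$, $n=m-1$ would need a separate limiting remark). The claim then reads
\[
\frac{1}{1+c_{-1}z}\le \sum_{k\ge0}(-1)^k\Big(\prod_{j<k}c_j\Big)z^k\le\frac{1}{1+c_0 z}.
\]
That is, $E_{\alpha,m,n}(-z)$ is squeezed between the geometric series with constant ratio $c_0$ (the largest ratio actually used) and the one with ratio $c_{-1}$.

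To prove this I would first show complete monotonicity of $z\mapsto E_{\alpha,m,n}(-z)$ on $[0,\infty)$. The cleanest way is to write the coefficients as moments, $\prod_{j<k}c_j=\int_0^\infty t^k\,d\nu(t)$ for a probability measure $\nu$, which gives $E_{\alpha,m,n}(-z)=\int_0^\infty\frac{d\nu(t)}{1+zt}$ formally. Failing that, I would use the Bernstein-function approach of \cite{ref17}.

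Granting the Stieltjes representation, the upper bound follows from Jensen's inequality applied to the convex function $t\mapsto 1/(1+zt)$: it gives $E(-z)\ge 1/(1+z\,\mathbb{E}t)$ with $\mathbb{E}t=c_0$. That is the reverse direction, so I would swap the roles. The correct pairing is the one in the statement, and I would need to check which inequality Jensen delivers, and use the log-convexity of the moment sequence (the decreasing ratios $c_j$) to get the other inequality, via $\mathbb{E}[t^{k}]\le c_{-1}^{\,?}\cdots$.

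I expect the main obstacle to be this step: establishing the Stieltjes/moment representation with the right monotone ratio comparison. It is the heart of the result in \cite{ref17}. Given that the lemma is explicitly quoted from there, I would ultimately cite it rather than reprove it.
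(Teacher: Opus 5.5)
The paper gives no argument for this lemma; it quotes it from \cite{ref17}. Your termwise proof of \eqref{equ2002} is correct. All Gamma arguments are positive, since $n\ge m-1>-1$ gives $\alpha n+1>1-\alpha\ge 0$, and the same argument even yields the stronger bound $E_{\alpha,m,n}(z)\ge 1+c_0z$.

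For \eqref{equa205}, however, the route you sketch cannot work, and swapping roles will not repair it. You correctly observe that $c_j$ is strictly decreasing, but that fact is exactly what rules out the representation $a_k:=\prod_{j<k}c_j=\int_0^\infty t^k\,d\nu(t)$.
\begin{itemize}
\item By Cauchy--Schwarz, every moment sequence satisfies $\mu_k^2\le\mu_{k-1}\mu_{k+1}$. So it is log-\emph{convex}, with nondecreasing ratios.
\item Your sequence has ratios $a_{k+1}/a_k=c_k$ that strictly decrease, so $(a_k)$ is strictly log-\emph{concave}. Your phrase ``log-convexity of the moment sequence (the decreasing ratios $c_j$)'' conflates the two.
\item Already at $k=1$, the putative $\nu$ would have mean $c_0$ and second moment $c_0c_1<c_0^2$, i.e.\ negative variance.
\item Concretely, $\alpha=m=1$, $n=0$ gives $E_{1,1,0}(-z)=e^{-z}$. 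Any $\int_0^\infty d\nu(t)/(1+zt)$ decays no faster than $C/z$, so $e^{-z}$ is not of this form.
\end{itemize}
The sign problem you ran into is the same phenomenon. If such a $\nu$ existed, Jensen would give $E_{\alpha,m,n}(-z)\ge 1/(1+c_0z)$. Combined with the asserted upper bound, this would force $E_{\alpha,m,n}(-z)\equiv 1/(1+c_0z)$, i.e.\ $c_j\equiv c_0$, which is false. For example, with $\alpha=m=n=1$ one has $E_{1,1,1}(-z)=(1-e^{-z})/z<2/(2+z)=1/(1+c_0z)$ for every $z>0$.

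The structure that fits the statement is a Laplace rather than a Stieltjes representation. Write $1/(1+cz)=\int_0^\infty e^{-s}e^{-czs}\,ds$, the Laplace transform of the exponential law with mean $c$. Then \eqref{equa205} compares $E_{\alpha,m,n}(-z)=\int_0^\infty e^{-zt}\,d\mu(t)$ (complete monotonicity; the $k$-th moment of $\mu$ must then be $k!\,a_k$) with the exponential laws of means $c_0$ and $c_{-1}$.
\begin{itemize}
\item A Stieltjes representation would make $\mu$ a scale mixture of exponentials. Such a mixture dominates the exponential law of the same mean in convex order, and that is what Jensen detects.
\item The upper bound in \eqref{equa205} asserts the opposite comparison. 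This is the direction your decreasing $c_j$ point to: $k!\,a_k\le k!\,c_0^k$, the moments of the exponential law with mean $c_0$.
\end{itemize}
Proving complete monotonicity together with this comparison is genuinely nontrivial; moment domination alone does not suffice, because the series alternates. That is the content of \cite{ref17}. So in your proposal, \eqref{equa205} is ultimately carried only by the citation, exactly as in the paper, and the sketched Stieltjes/Jensen argument should be discarded rather than patched.
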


From inequality \eqref{equa205}, let us set $z = \mu\,l^{\alpha+\beta}$ with $\mu > 0$ and $l > 0$.  
Then, by choosing 
$
m = 1 + \frac{\beta}{\alpha}, 
\;
n = \frac{\beta}{\alpha},
$ 
we obtain
\begin{equation}\label{equ203}
\frac{1}{1 + \eta_1\,\mu\,l^{\alpha+\beta}}
\;\leq\;
E_{\alpha,\,1+\frac{\beta}{\alpha},\,\frac{\beta}{\alpha}}
\!\left(-\mu\,l^{\alpha+\beta}\right)
\;\leq\;
\frac{1}{1 + \eta_2\,\mu\,l^{\alpha+\beta}},
\end{equation}
where 
\[
\eta_1 = \Gamma(1 - \alpha),
\qquad 
\eta_2 = \frac{\Gamma(1 + \beta)}{\Gamma(1 + \alpha + \beta)}.
\]
Similarly, for 
$
m = 1 + \frac{\beta}{\alpha},
\;
n = 1 + \frac{2\beta}{\alpha},
$ 
we obtain the estimate
\begin{equation}\label{equ208}
\frac{1}{1 + \eta_2\,\mu\,l^{\alpha+\beta}}
\;\leq\;
E_{\alpha,\,1+\frac{\beta}{\alpha},\,1+\frac{2\beta}{\alpha}}
\!\left(-\mu\,l^{\alpha+\beta}\right)
\;\leq\;
\frac{1}{1 + \eta_3\,\mu\,l^{\alpha+\beta}},
\end{equation}
where 
\[
\eta_3 = \frac{\Gamma(1 + \alpha + 2\beta)}{\Gamma(1 + 2\alpha + 2\beta)}.
\]

Next, we introduce several normed function spaces that will be used throughout this paper. 

Let $C(0,+\infty)$ denote the space of all continuous functions on $(0,+\infty)$.  
For a parameter $0 \leq \gamma < 1$, we define
\[
C_{\gamma}(0,+\infty)
=
\left\{
f(x)\,\bigg|\,
x^{\gamma}f(x) \in C(0,+\infty)
\right\},
\]
that is, functions whose weighted form $x^\gamma f(x)$ remains continuous on $(0,+\infty)$. 
Similarly, for $\gamma \le \alpha$, we define the space
\[
C_{\gamma}^{\alpha}(0,+\infty)
=
\left\{
f(x) \in C(0,+\infty)\,\bigg|\,
\partial_{0,x}^{\alpha} f(x) \in C_{\gamma}(0,+\infty)
\right\}.
\]
These spaces are equipped with the norms
\[
\|f\|_{C_\gamma}
= \sup_{x \in (0,+\infty)} \left| x^\gamma f(x) \right|,
\qquad
\|f\|_{C_\gamma^{\alpha}}
= \sup_{x \in (0,+\infty)} |f(x)|
+ \sup_{x \in (0,+\infty)} \left| x^\gamma \partial_{0,x}^{\alpha} f(x) \right|.
\]

For the symmetric uniformly elliptic operator $-L$, we define its exponential operator by the series
\begin{equation}
\exp\bigl((-L)^r\bigr)
= I + \frac{1}{1!}(-L)^r
+ \frac{1}{2!}(-L)^{2r}
+ \frac{1}{3!}(-L)^{3r}
+ \cdots
= \sum_{m=0}^{\infty} \frac{(-L)^{mr}}{m!},
\end{equation}
where $r \in \mathbb{R}$, $I$ is the identity operator, and $(-L)^{mr}$ is defined spectrally via
\[
(-L)^{mr}\varphi_k = \lambda_k^{mr}\varphi_k, \quad m=0,1,2,\dots,
\]
with $\{(\lambda_k,\varphi_k)\}_{k=1}^{\infty}$ being the eigenpairs of $-L$. 
Accordingly, we introduce the following function spaces:
\[
D((-L)^r)
=
\left\{
\psi \in L^2(\Omega)
\,\Big|\,
\sum_{k=1}^{\infty} \lambda_k^{2r} \big|(\psi,\varphi_k)\big|^2 < \infty
\right\},
\]
\[
D\!\left(\exp\!\left(\frac{(-L)^r}{2}\right)\right)
=
\left\{
\psi \in L^2(\Omega)
\,\Big|\,
\sum_{k=1}^{\infty} \exp(\lambda_k^{r}) \big|(\psi,\varphi_k)\big|^2 < \infty
\right\},
\]
where $(\cdot,\cdot)$ denotes the inner product in $L^2(\Omega)$. 

Clearly, both $D((-L)^r)$ and $D\!\left(\exp\!\left(\frac{(-L)^r}{2}\right)\right)$ are Hilbert spaces equipped respectively with the norms
\[
\|\psi\|_{D((-L)^r)}
=
\left( \sum_{k=1}^{\infty} \lambda_k^{2r} \big|(\psi,\varphi_k)\big|^2 \right)^{1/2},
\quad
\|\psi\|_{r,\mathrm{exp}}
=
\left( \sum_{k=1}^{\infty} \exp(\lambda_k^r) \big|(\psi,\varphi_k)\big|^2 \right)^{1/2}.
\]
Particularly, we write $D((-L)^1)$ and $D\!\left(\exp\!\left(\frac{(-L)^1}{2}\right)\right)$ simply as $D(-L)$ and $D\!\left(\exp\!\left(-\frac{L}{2}\right)\right)$, respectively.  
When $r=0$, both spaces reduce to the Hilbert space $L^2(\Omega)$ with norm $\|\cdot\|_{L^2(\Omega)}$.

%==============================================
\section{Well-posedness of the Direct Problem} \label{sec:Well-posedness of the direct problem}
%==============================================
The direct problem associated with \eqref{equa103} consists in determining the solution $u(x,y)$ once the initial data $f(y)$ and the source term $g(y)$ are given.

\begin{theorem}\label{th03}
Assume that $f \in D(-L)$ and $g \in L^2(\Omega)$, and that the compatibility conditions at $x=0$ are satisfied. Then the boundary value problem \eqref{equa103} admits a unique generalized solution given by
\begin{align}\label{equ301}
u(x,y)
&=
\sum_{k=1}^{\infty}
f_k\,
E_{\alpha,\,1+\frac{\beta}{\alpha},\,\frac{\beta}{\alpha}}
\left(-\sqrt{\lambda_k}\,x^{\alpha+\beta}\right)
\,\varphi_k(y) \nonumber\\
&\quad
-\sum_{k=1}^{\infty}
\frac{\Gamma(1+\beta)}{\Gamma(1+\alpha+\beta)}\,
\frac{g_k}{\sqrt{\lambda_k}}\,
x^{\alpha+\beta}
E_{\alpha,\,1+\frac{\beta}{\alpha},\,1+\frac{2\beta}{\alpha}}
\left(-\sqrt{\lambda_k}\,x^{\alpha+\beta}\right)\,
\varphi_k(y),
\end{align}
where
$
f_k = (f,\varphi_k), 
\quad 
g_k = (g,\varphi_k),
\quad k = 1,2,\dots.
$
\end{theorem}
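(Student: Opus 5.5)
Expand the solution in the eigenbasis of $-L$, $u(x,y)=\sum_k u_k(x)\varphi_k(y)$ with $u_k(x)=(u(x,\cdot),\varphi_k)$. This reduces \eqref{equa103} to the family of fractional ODEs
\[
D_x^{2\alpha}u_k(x)-\lambda_k u_k(x)=g_k,\quad x>0,\qquad u_k(0)=f_k,\qquad \sup_{x>0}|u_k(x)|<\infty .
\]
Factor the operator as $D_x^{2\alpha}-\lambda_k=(D_x^\alpha-\sqrt{\lambda_k})(D_x^\alpha+\sqrt{\lambda_k})$ at the level of eigenfunctions. For homogeneous solutions, I would use Lemma~\ref{lemma1} with $m=1+\beta/\alpha$, $n=\beta/\alpha$, so that $\alpha m=\alpha+\beta$ and $x^{\alpha m-\alpha}=x^\beta$. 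This gives
\[
D_x^\alpha E_{\alpha,m,n}(\mu x^{\alpha+\beta})=\mu E_{\alpha,m,n}(\mu x^{\alpha+\beta}),
\]
and hence $D_x^{2\alpha}E=\mu^2E$. Thus $\mu=\pm\sqrt{\lambda_k}$ gives two homogeneous solutions. The growing one ($\mu=+\sqrt{\lambda_k}$) is excluded by the boundedness condition via \eqref{equ2002}. The decaying one is bounded by \eqref{equ203}, and its coefficient is fixed by $E(0)=1$, which yields the $f_k$ term.

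For a particular solution I would seek $v_k=c\,x^{\alpha+\beta}E_{\alpha,m,n'}(-\sqrt{\lambda_k}x^{\alpha+\beta})$ with $n'=1+2\beta/\alpha$. Then $q=n'+1-m=1+\beta/\alpha$ and $\alpha q=\alpha+\beta$. Lemma~\ref{lemma2} gives
\[
D_x^\alpha v_k=c\Bigl[-\sqrt{\lambda_k}\,x^{\alpha+\beta}E(\cdot)+\tfrac{\Gamma(1+\alpha+\beta)}{\Gamma(1+\beta)}\Bigr],
\]
using $x^{-\beta}x^{\alpha n'}=x^{\alpha+\beta}$ and $x^{-\beta}x^{\alpha(n'-m)}=1$. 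Applying $D_x^\alpha$ once more, the constant is killed by the Caputo derivative and the first term reproduces $\lambda_k v_k$ plus a constant. So $D_x^{2\alpha}v_k-\lambda_k v_k=-c\sqrt{\lambda_k}\,\Gamma(1+\alpha+\beta)/\Gamma(1+\beta)$, and setting this equal to $g_k$ determines $c=-\frac{\Gamma(1+\beta)}{\Gamma(1+\alpha+\beta)}\frac{g_k}{\sqrt{\lambda_k}}$, matching \eqref{equ301}. Since $v_k(0)=0$, the initial condition is not disturbed. I expect this bookkeeping of the exponents and constants to be the main obstacle, and I would check it carefully. Note that $v_k$ is bounded, since by \eqref{equ208} $x^{\alpha+\beta}E(\cdot)\le 1/(\eta_3\sqrt{\lambda_k})$.

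Next comes convergence and regularity of the series. From \eqref{equ203},
\[
|u_k(x)|\le |f_k|+\eta_2|g_k|/(\eta_3\lambda_k),
\]
so $\|u(x,\cdot)\|_{L^2}\le C(\|f\|+\|g\|)$ uniformly in $x$. For $Lu$ we need $\sum\lambda_k^2|u_k|^2<\infty$ uniformly. The $f$ part is controlled by $f\in D(-L)$. For the $g$ part, $\lambda_k|v_k|\lesssim |g_k|\,\sqrt{\lambda_k}x^{\alpha+\beta}/(1+\eta_3\sqrt{\lambda_k}x^{\alpha+\beta})\le |g_k|/\eta_3$. Then $D_x^{2\alpha}u=g-Lu$ in $L^2$, and $D_x^\alpha u$ is similarly bounded by $C(\sqrt{\lambda_k}|f_k|+|g_k|/\sqrt{\lambda_k})$, which is enough for term-by-term application. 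These estimates give well-posedness (continuous dependence) in $C([0,\infty);L^2)$ and in the weighted spaces $C^\alpha_\gamma$.

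Uniqueness: if $f=g=0$, each $u_k$ solves the homogeneous ODE with $u_k(0)=0$ and is bounded. The general solution of $D_x^{2\alpha}w=\lambda_k w$ is a combination of $E(\pm\sqrt{\lambda_k}x^{\alpha+\beta})$, since the second-order problem has a two-dimensional solution space given the initial values $w(0)$ and $(D_x^\alpha w)(0)$. This could be shown by reducing to a Volterra integral equation with a weakly singular kernel and using a contraction argument. Boundedness kills the growing mode and $u_k(0)=0$ kills the other, so $u_k\equiv0$, and completeness of $\{\varphi_k\}$ gives $u=0$.
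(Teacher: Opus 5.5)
Your proposal is correct and follows essentially the same route as the paper: reduction to the modal problems $D_x^{2\alpha}u_k-\lambda_k u_k=g_k$, homogeneous solutions $E_{\alpha,1+\frac{\beta}{\alpha},\frac{\beta}{\alpha}}(\pm\sqrt{\lambda_k}\,x^{\alpha+\beta})$, the particular solution from Lemma~\ref{lemma2} with $n=1+\frac{2\beta}{\alpha}$, boundedness plus $E(0)=1$ to fix the constants, and modewise uniqueness. The differences are only in how steps are justified: you verify the homogeneous solutions directly from Lemma~\ref{lemma1} where the paper cites Kilbas et al., and you get the two-dimensionality of the homogeneous solution space from a Volterra system in $(w, D_x^\alpha w)$, whereas the paper uses the one-dimensional kernels of the two first-order factors $D_x^\alpha\pm\sqrt{\lambda_k}$.
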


\textbf{Proof.}
Since the eigenfunctions $\{\varphi_k\}_{k=1}^\infty$ of the operator $-L$ form a complete orthonormal basis of $L^2(\Omega)$, the solution $u(x,y)$, the source term $g(y)$, and the boundary value data $f(y)$ can be expanded in the form
\begin{equation}\label{equa302}
\begin{cases}
u(x,y) = \displaystyle\sum_{k=1}^{\infty} u_k(x)\, \varphi_k(y), \\
g(y) = \displaystyle\sum_{k=1}^{\infty} g_k\, \varphi_k(y), \quad
f(y) = \displaystyle\sum_{k=1}^{\infty} f_k\, \varphi_k(y),
\end{cases}
\end{equation}
where the Fourier coefficients are defined by
\[
u_k(x) = (u(x,\cdot), \varphi_k), 
\qquad
g_k = (g, \varphi_k), 
\qquad
f_k = (f, \varphi_k).
\]

Substituting the expansions \eqref{equa302} into \eqref{equa103} and using the orthogonality of $\{\varphi_k\}$, we obtain a sequence of one-dimensional boundary value problems for $u_k(x)$:
\begin{equation}\label{equ303}
\begin{cases}
D_x^{2\alpha} u_k(x) - \mu_k^2 u_k(x) = g_k, \quad x > 0, \\[1mm]
u_k(0) = f_k, \\[1mm]
\displaystyle \lim_{x \to +\infty} |u_k(x)| < \infty,
\end{cases}
\end{equation}
where $\mu_k = \sqrt{\lambda_k}$. 

We first study the homogeneous part of \eqref{equ303}, namely
\begin{equation}\label{equ304}
D_x^{2\alpha} u_k(x) - \mu_k^2 u_k(x) = 0.
\end{equation}
Using the definition $D_x^{2\alpha} = D_x^{\alpha}D_x^{\alpha}$, equation \eqref{equ304} can be rewritten as
\begin{equation}\label{equ305}
\bigl(D_x^{\alpha} - \mu_k\bigr)\bigl(D_x^{\alpha} + \mu_k\bigr) u_k(x) = 0.
\end{equation}
Since the operators $D_x^{\alpha} - \mu_k$ and $D_x^{\alpha} + \mu_k$ commute, equation \eqref{equ305} implies that the general solution of \eqref{equ304} is determined by the solutions of the following two first-order fractional differential equations:
\begin{equation}\label{equ306}
\bigl(D_x^{\alpha} + \mu_k\bigr) u_k(x) = 0,
\end{equation}
or
\begin{equation}\label{equ307}
\bigl(D_x^{\alpha} - \mu_k\bigr) u_k(x) = 0.
\end{equation}

From Theorem 3.25 and formula (4.1.82) in \cite{ref02}, the general solution of \eqref{equ306} is given by  
\[
C_1\,E_{\alpha,\,1+\frac{\beta}{\alpha},\,\frac{\beta}{\alpha}}\!\left(-\mu_k\,x^{\alpha+\beta}\right),
\]
while the general solution of \eqref{equ307} takes the form  
\[
C_2\,E_{\alpha,\,1+\frac{\beta}{\alpha},\,\frac{\beta}{\alpha}}\!\left(\mu_k\,x^{\alpha+\beta}\right),
\]
where $C_1$ and $C_2$ are arbitrary constants.  
Therefore, the general solution of the homogeneous equation \eqref{equ304} can be written as
\begin{equation}\label{equ308}
u_k^0(x)
=
C_1\,E_{\alpha,\,1+\frac{\beta}{\alpha},\,\frac{\beta}{\alpha}}\!\left(-\mu_k\,x^{\alpha+\beta}\right)
+
C_2\,E_{\alpha,\,1+\frac{\beta}{\alpha},\,\frac{\beta}{\alpha}}\!\left(\mu_k\,x^{\alpha+\beta}\right).
\end{equation}

We now show that \eqref{equ308} indeed represents the full set of solutions to \eqref{equ305}.  
Suppose, on the contrary, that there exists a solution $u_k(x)$ of \eqref{equ304} such that
\[
(D_x^{\alpha} + \mu_k)u_k(x) \neq 0, 
\qquad
(D_x^{\alpha} - \mu_k)u_k(x) \neq 0,
\]
but
\[
(D_x^{\alpha} - \mu_k)(D_x^{\alpha} + \mu_k)u_k(x) = 0
\quad\text{and}\quad
(D_x^{\alpha} + \mu_k)(D_x^{\alpha} - \mu_k)u_k(x) = 0.
\]
This implies that both compositions of the operators annihilate $u_k(x)$, while neither factor does. Then, we obtain
$$(D_x^{\alpha} + \mu_k ) u_k(x)=C_2 E_{\alpha,1+\frac{\beta}{\alpha},\frac{\beta}{\alpha}}(\mu_k x^{\alpha+\beta}),$$
$$(D_x^{\alpha} - \mu_k ) u_k(x)=C_1 E_{\alpha,1+\frac{\beta}{\alpha},\frac{\beta}{\alpha}}(-\mu_k x^{\alpha+\beta}).$$
Subtracting the two equations above gives
$$u_k(x) = \frac{1}{2\mu_k}\left[-C_1 E_{\alpha,1+\frac{\beta}{\alpha},\frac{\beta}{\alpha}}(-\mu_k x^{\alpha+\beta})+
C_2 E_{\alpha,1+\frac{\beta}{\alpha},\frac{\beta}{\alpha}}(\mu_k x^{\alpha+\beta})
\right].$$
Since $C_1$ and $C_2$ are arbitrary constants, any solution of the homogeneous equation \eqref{equ305} must be of the form $u_k^0(x)$ given in \eqref{equ308}. 
Furthermore, the functions 
\[
E_{\alpha,\,1+\frac{\beta}{\alpha},\,\frac{\beta}{\alpha}}(-\mu_k x^{\alpha+\beta})
\quad \text{and} \quad
E_{\alpha,\,1+\frac{\beta}{\alpha},\,\frac{\beta}{\alpha}}(\mu_k x^{\alpha+\beta})
\]
are linearly independent. 
Therefore, $u_k^0(x)$ represents the complete set of solutions to the homogeneous equation \eqref{equ305}, and hence also to \eqref{equ304}.

On the other hand, by choosing 
$
m = 1 + \frac{\beta}{\alpha}, 
\; 
n = 1 + \frac{2\beta}{\alpha}
$
in Lemma \ref{lemma2}, we obtain
\[
\partial_{0,x}^{\alpha}\!\left(x^{\alpha+\beta}
E_{\alpha,\,1+\frac{\beta}{\alpha},\,1+\frac{2\beta}{\alpha}}(\mu\,x^{\alpha+\beta})\right)
=
\mu\,x^{\alpha+2\beta}
E_{\alpha,\,1+\frac{\beta}{\alpha},\,1+\frac{2\beta}{\alpha}}(\mu\,x^{\alpha+\beta})
+
\frac{\Gamma(\alpha+\beta+1)}{\Gamma(\beta+1)}\,x^{\beta}.
\]
Hence, for $\mu_k = \sqrt{\lambda_k}$, we compute
\[
\begin{aligned}
&D_x^{2\alpha}
\left(
-\frac{\Gamma(\beta+1)}{\Gamma(\alpha+\beta+1)}\,
\frac{g_k}{\mu_k}\,
x^{\alpha+\beta}
E_{\alpha,\,1+\frac{\beta}{\alpha},\,1+\frac{2\beta}{\alpha}}
\left(-\mu_k x^{\alpha+\beta}\right)
\right) \\
&\quad=
\frac{1}{x^{\beta}}\,\partial_{0,x}^{\alpha}
\left[
\frac{\Gamma(\beta+1)}{\Gamma(\alpha+\beta+1)}\,g_k\,
x^{\alpha+\beta}
E_{\alpha,\,1+\frac{\beta}{\alpha},\,1+\frac{2\beta}{\alpha}}
\left(-\mu_k x^{\alpha+\beta}\right)
- \frac{g_k}{\mu_k}
\right] \\
&\quad=
g_k
\left[
-\mu_k
\frac{\Gamma(\beta+1)}{\Gamma(\alpha+\beta+1)}\,
x^{\alpha+\beta}
E_{\alpha,\,1+\frac{\beta}{\alpha},\,1+\frac{2\beta}{\alpha}}
\left(-\mu_k x^{\alpha+\beta}\right)
+
1
\right] \\
&\quad=
\mu_k^2
\left(
-\frac{\Gamma(\beta+1)}{\Gamma(\alpha+\beta+1)}\,
\frac{g_k}{\mu_k}\,
x^{\alpha+\beta}
E_{\alpha,\,1+\frac{\beta}{\alpha},\,1+\frac{2\beta}{\alpha}}
\left(-\mu_k x^{\alpha+\beta}\right)
\right)
+ g_k.
\end{aligned}
\]
A similar computation yields
\[
\begin{aligned}
&D_x^{2\alpha}
\left(
\frac{\Gamma(\beta+1)}{\Gamma(\alpha+\beta+1)}\,
\frac{g_k}{\mu_k}\,
x^{\alpha+\beta}
E_{\alpha,\,1+\frac{\beta}{\alpha},\,1+\frac{2\beta}{\alpha}}
\left(\mu_k x^{\alpha+\beta}\right)
\right)\\
&\quad =
\mu_k^2
\left(
\frac{\Gamma(\beta+1)}{\Gamma(\alpha+\beta+1)}\,
\frac{g_k}{\mu_k}\,
x^{\alpha+\beta}
E_{\alpha,\,1+\frac{\beta}{\alpha},\,1+\frac{2\beta}{\alpha}}
\left(\mu_k x^{\alpha+\beta}\right)
\right) 
+ g_k.
\end{aligned}
\]
Therefore, a particular solution of the nonhomogeneous ODE \eqref{equ303} is given by
\[
u_k^*(x)
=
\frac{\Gamma(\beta+1)}{\Gamma(\alpha+\beta+1)}\,
\frac{g_k}{\mu_k}\,
x^{\alpha+\beta}
\left[
-d_1\,
E_{\alpha,\,1+\frac{\beta}{\alpha},\,1+\frac{2\beta}{\alpha}}
\left(-\mu_k x^{\alpha+\beta}\right)
+
d_2\,
E_{\alpha,\,1+\frac{\beta}{\alpha},\,1+\frac{2\beta}{\alpha}}
\left(\mu_k x^{\alpha+\beta}\right)
\right],
\]
where the constants $d_1$ and $d_2$ satisfy $d_1 + d_2 = 1$. 
To satisfy the boundedness condition 
\[
\lim_{x \to +\infty} |u_k(x)| < \infty,
\]
we note that
\[
E_{\alpha,\,1+\frac{\beta}{\alpha},\,1+\frac{2\beta}{\alpha}}
\left(\mu_k x^{\alpha+\beta}\right)
\to \infty \quad \text{as } x \to +\infty,
\]
while
\[
E_{\alpha,\,1+\frac{\beta}{\alpha},\,1+\frac{2\beta}{\alpha}}
\left(-\mu_k x^{\alpha+\beta}\right)
\to 0.
\]
Thus, boundedness requires $d_2 = 0$ and hence $d_1 = 1$.  
Therefore, a particular solution satisfying the boundary condition is given by
\[
u_k^*(x)
=
-\frac{\Gamma(\beta+1)\,g_k}{\Gamma(\alpha+\beta+1)\,\mu_k}
\,
x^{\alpha+\beta}
E_{\alpha,\,1+\frac{\beta}{\alpha},\,1+\frac{2\beta}{\alpha}}
\left(-\mu_k x^{\alpha+\beta}\right).
\]

In summary, combining the homogeneous and particular solutions, the solution of \eqref{equ303} can be written as
\[
u_k(x)
=
C_1\,
E_{\alpha,\,1+\frac{\beta}{\alpha},\,\frac{\beta}{\alpha}}
\left(-\mu_k x^{\alpha+\beta}\right)
+
C_2\,
E_{\alpha,\,1+\frac{\beta}{\alpha},\,\frac{\beta}{\alpha}}
\left(\mu_k x^{\alpha+\beta}\right)
+
u_k^*(x).
\]
To determine the constants $C_1$ and $C_2$, we use the boundary conditions
\[
u_k(0) = f_k,
\quad
\lim_{x \to +\infty} |u_k(x)| < \infty.
\]
Since
$
E_{\alpha,\,1+\frac{\beta}{\alpha},\,\frac{\beta}{\alpha}}
\left(\mu_k x^{\alpha+\beta}\right)
\to \infty
\quad \text{as } x \to +\infty,
$
boundedness implies $C_2 = 0$. Evaluating at $x = 0$ further yields $C_1 = f_k$. Therefore,
\[
u_k(x)
=
f_k\,
E_{\alpha,\,1+\frac{\beta}{\alpha},\,\frac{\beta}{\alpha}}
\left(-\mu_k x^{\alpha+\beta}\right)
-
\frac{\Gamma(\beta+1)}{\Gamma(\alpha+\beta+1)}
\frac{g_k}{\mu_k}\,
x^{\alpha+\beta}
E_{\alpha,\,1+\frac{\beta}{\alpha},\,1+\frac{2\beta}{\alpha}}
\left(-\mu_k x^{\alpha+\beta}\right).
\]
Substituting this expression into the Fourier expansion of $u(x,y)$, we obtain the solution of the direct problem:
\begin{align*}
u(x,y)
&=
\sum_{k=1}^{\infty}
f_k\,
E_{\alpha,\,1+\frac{\beta}{\alpha},\,\frac{\beta}{\alpha}}
\left(-\sqrt{\lambda_k}\,x^{\alpha+\beta}\right)
\varphi_k(y)
\\
&\quad
-
\sum_{k=1}^{\infty}
\frac{\Gamma(1+\beta)}{\Gamma(1+\alpha+\beta)}
\frac{g_k}{\sqrt{\lambda_k}}\,
x^{\alpha+\beta}
E_{\alpha,\,1+\frac{\beta}{\alpha},\,1+\frac{2\beta}{\alpha}}
\left(-\sqrt{\lambda_k}\,x^{\alpha+\beta}\right)
\varphi_k(y).
\end{align*}

To show the uniqueness of the solution, let $w_1(x,y)$ and $w_2(x,y)$ be two solutions of the forward problem \eqref{equa103} corresponding to the same data $f(y)$ and $g(y)$. 
Define 
\[
w(x,y) := w_1(x,y) - w_2(x,y).
\]
Then $w(x,y)$ satisfies the associated homogeneous problem
\[
\begin{cases}
D_x^{2\alpha} w(x,y) + L w(x,y) = 0, \quad & (x,y) \in (0,\infty) \times \Omega, \\
w(x,y) = 0, \quad & (x,y) \in (0,\infty) \times \partial \Omega, \\
w(0,y) = 0, \quad & y \in \Omega, \\
\displaystyle \lim_{x \to \infty} \|w(x,\cdot)\|_{L^2(\Omega)} = 0. &
\end{cases}
\]
Expanding $w(x,y)$ in terms of the eigenfunctions $\{\varphi_k\}_{k=1}^{\infty}$ of $-L$, we write
\[
w(x,y) = \sum_{k=1}^{\infty} w_k(x)\,\varphi_k(y), 
\quad 
w_k(x) = (w(x,\cdot), \varphi_k).
\]
Substituting this into the homogeneous equation and using orthogonality, we obtain for each $k \geq 1$:
\[
\begin{cases}
D_x^{2\alpha} w_k(x) - \lambda_k w_k(x) = 0, \\
w_k(0) = 0, \\
\displaystyle \lim_{x \to \infty} |w_k(x)| = 0.
\end{cases}
\]
The general solution of this homogeneous problem is given by
\[
w_k(x)
=
C_k\,E_{\alpha,\,1+\frac{\beta}{\alpha},\,\frac{\beta}{\alpha}}
\left(-\sqrt{\lambda_k}\,x^{\alpha+\beta}\right),
\]
for some constants $C_k$. 
Since $w_k(0)=0$ and $E_{\alpha,1+\frac{\beta}{\alpha},\frac{\beta}{\alpha}}(0)=1$, it follows that $C_k=0$ for all $k$. 
Therefore $w_k(x) \equiv 0$ and hence
\[
w(x,y) \equiv 0 \quad \text{for all } (x,y) \in (0,\infty) \times \Omega.
\]
This shows that $w_1(x,y)\! = \!w_2(x,y)$, and thus the solution of the direct problem \eqref{equa103} is unique.
\hfill $\Box$

\begin{theorem}\label{th02}
Let $f(y) \in D(-L)$ and $g(y) \in L^2(\Omega)$ satisfy the compatibility conditions. 
Then the solution $u(x,y)$ to the forward problem \eqref{equa103} enjoys the following regularity estimates:
\begin{equation}\label{equa310}
\begin{cases}
\displaystyle
\|u(x,\cdot)\|_{L^2(\Omega)}
\le 
\|f\|_{L^2(\Omega)} + \eta_4\,\|g\|_{L^2(\Omega)}, \\
\displaystyle
\sup_{x \in (0,\infty)}
\left\|
D_x^{2\alpha} u(x,\cdot)
\right\|_{L^2(\Omega)}
\le
\|f\|_{D(-L)}
+
\left(
1 + \frac{\eta_2}{\eta_3}
\right)
\|g\|_{L^2(\Omega)}, \\
\displaystyle
\sup_{x \in (0,\infty)}
\left\|
L u(x,\cdot)
\right\|_{L^2(\Omega)}
\le
\|f\|_{D(-L)}
+
\frac{\eta_2}{\eta_3}
\|g\|_{L^2(\Omega)}.
\end{cases}
\end{equation}
Here, 
$
\eta_4 
= \frac{\eta_2}{\eta_3 \lambda_1},
$
and $\lambda_1>0$ denotes the first eigenvalue of $-L$.
\end{theorem}

\textbf{Proof.}
Let $u(x,y)$ be decomposed into the sum of the homogeneous solution $u_1(x,y)$ and a particular solution $u_2(x,y)$ of the forward problem \eqref{equa103}, i.e.,
\[
u(x,y) = u_1(x,y) + u_2(x,y),
\]
where
\begin{equation}
\label{equ_u1u2}
\begin{cases}
\displaystyle
u_1(x,y)
=
\sum_{k=1}^{\infty}
f_k\,
E_{\alpha,\,1+\frac{\beta}{\alpha},\,\frac{\beta}{\alpha}}
\left(
-\sqrt{\lambda_k}\,x^{\alpha+\beta}
\right)
\varphi_k(y),\\
\displaystyle
u_2(x,y)
=
-
\sum_{k=1}^{\infty}
\frac{\Gamma(\beta+1)}{\Gamma(\alpha+\beta+1)}
\frac{g_k}{\sqrt{\lambda_k}}\,
x^{\alpha+\beta}
E_{\alpha,\,1+\frac{\beta}{\alpha},\,1+\frac{2\beta}{\alpha}}
\left(
-\sqrt{\lambda_k}\,x^{\alpha+\beta}
\right)
\varphi_k(y).
\end{cases}
\end{equation}
Here, $\{f_k\}$ and $\{g_k\}$ denote the Fourier coefficients of $f(y)$ and $g(y)$ with respect to the orthonormal basis $\{\varphi_k\}_{k=1}^{\infty}$ of $L^2(\Omega)$.

We now estimate the regularity of $u_1(x,y)$ and $u_2(x,y)$ separately.

For $u_1(x,y)$, by Parseval's identity and the fact that 
\[
\left|E_{\alpha,\,1+\frac{\beta}{\alpha},\,\frac{\beta}{\alpha}}
\left(-\sqrt{\lambda_k}x^{\alpha+\beta}\right)\right|\leq 1 \quad (\forall\,x>0,\;k\geq1),
\]
we obtain
\begin{align*}
\|u_1(x,\cdot)\|_{L^2(\Omega)}
&=
\left(
\sum_{k=1}^{\infty}
\left|
f_k\,E_{\alpha,\,1+\frac{\beta}{\alpha},\,\frac{\beta}{\alpha}}
\left(-\sqrt{\lambda_k}\,x^{\alpha+\beta}\right)
\right|^2
\right)^{1/2} \\
&\le 
\left(
\sum_{k=1}^{\infty}
|f_k|^2
\right)^{1/2}
=
\|f\|_{L^2(\Omega)}.
\end{align*}
For the particular solution $u_2(x,y)$, using the estimate 
\[
\left|
E_{\alpha,\,1+\frac{\beta}{\alpha},\,1+\frac{2\beta}{\alpha}}
\left(-\sqrt{\lambda_k}\,x^{\alpha+\beta}\right)
\right|
\le
\frac{1}{1+\eta_3\sqrt{\lambda_k}\,x^{\alpha+\beta}},
\]
we have
\begin{align*}
\|u_2(x,\cdot)\|_{L^2(\Omega)}
&=
\left(
\sum_{k=1}^{\infty}
\left|
-\frac{\Gamma(\beta+1)}{\Gamma(\alpha+\beta+1)}
\frac{g_k}{\sqrt{\lambda_k}}\,
x^{\alpha+\beta}
E_{\alpha,\,1+\frac{\beta}{\alpha},\,1+\frac{2\beta}{\alpha}}
\left(-\sqrt{\lambda_k}x^{\alpha+\beta}\right)
\right|^2
\right)^{1/2} \\
&\le
\left(
\sum_{k=1}^{\infty}
|g_k|^2\,
\left(
\frac{\Gamma(\beta+1)}{\Gamma(\alpha+\beta+1)}
\frac{x^{\alpha+\beta}}{\sqrt{\lambda_k}\,\bigl(1+\eta_3\sqrt{\lambda_k}x^{\alpha+\beta}\bigr)}
\right)^{\!2}
\right)^{1/2} \\
&\le
\left(
\sum_{k=1}^{\infty}
|g_k|^2\,
\left(
\frac{\Gamma(\beta+1)}{\Gamma(\alpha+\beta+1)}
\frac{1}{\eta_3\,\lambda_k}
\right)^{\!2}
\right)^{1/2} \\
&\le
\frac{\Gamma(\beta+1)}{\Gamma(\alpha+\beta+1)}
\frac{1}{\eta_3\,\lambda_1}
\left(
\sum_{k=1}^{\infty}
|g_n|^2
\right)^{1/2}
=
\eta_4\,\|g\|_{L^2(\Omega)},
\end{align*}
where $\eta_4 = \frac{\eta_2}{\eta_3 \lambda_1}$. Combining the above estimates yields
\[
\|u(x,\cdot)\|_{L^2(\Omega)} 
\le 
\|f\|_{L^2(\Omega)} + \eta_4\,\|g\|_{L^2(\Omega)}.
\]

It remains to estimate the norms of $\mathcal{D}_{x}^{2\alpha}u(x,y)$ and $-Lu(x,y)$. 
Using the eigenfunction expansion $u(x,y) = \sum_{k=1}^{\infty} u_k(x)\varphi_k(y)$ and the identities
\[
\mathcal{D}_{x}^{2\alpha} u_k(x) = \lambda_k u_k(x) + g_k, 
\quad 
-L\varphi_k = \lambda_k \varphi_k,
\]
we obtain
\begin{align*}
\mathcal{D}_{x}^{2\alpha}u(x,y)
&= 
\sum_{k=1}^{\infty}
\bigl(\lambda_k u_k(x) + g_k\bigr)\,\varphi_k(y) \\
&=
\sum_{k=1}^{\infty}
\lambda_k f_k
E_{\alpha,\,1+\frac{\beta}{\alpha},\,\frac{\beta}{\alpha}}
\left(-\sqrt{\lambda_k}\,x^{\alpha+\beta}\right)
\varphi_k(y) \\
&\quad+
\sum_{k=1}^{\infty}
g_k
\left[
\frac{\Gamma(\beta+1)}{\Gamma(\alpha+\beta+1)}
\bigl(-\sqrt{\lambda_k}\,x^{\alpha+\beta}\bigr)
E_{\alpha,\,1+\frac{\beta}{\alpha},\,1+\frac{2\beta}{\alpha}}
\left(-\sqrt{\lambda_k}x^{\alpha+\beta}\right)
+ 1
\right]
\varphi_k(y).
\end{align*}
Similarly,
\begin{align*}
-Lu(x,y)
&=
\sum_{k=1}^{\infty}
\lambda_k u_k(x)\,\varphi_k(y) \\
&=
\sum_{k=1}^{\infty}
\lambda_k f_k
E_{\alpha,\,1+\frac{\beta}{\alpha},\,\frac{\beta}{\alpha}}
\left(-\sqrt{\lambda_k}\,x^{\alpha+\beta}\right)\varphi_k(y)
\\
&\quad
-
\sum_{k=1}^{\infty}
g_k
\frac{\Gamma(\beta+1)}{\Gamma(\alpha+\beta+1)}
\sqrt{\lambda_k}\,x^{\alpha+\beta}
E_{\alpha,\,1+\frac{\beta}{\alpha},\,1+\frac{2\beta}{\alpha}}
\left(-\sqrt{\lambda_k}x^{\alpha+\beta}\right)
\varphi_k(y).
\end{align*}

By Parseval’s identity and using the bounds in Lemma \ref{lemma2}, we have
\[
\sup_{x\in(0,\infty)}
\|\mathcal{D}_{x}^{2\alpha}u(x,\cdot)\|_{L^2(\Omega)}
\le
\|f\|_{D(-L)}
+
\left(1 + \frac{\eta_2}{\eta_3}\right)
\|g\|_{L^2(\Omega)},
\]
and
\[
\sup_{x\in(0,\infty)}
\|Lu(x,\cdot)\|_{L^2(\Omega)}
\le
\|f\|_{D(-L)}
+
\frac{\eta_2}{\eta_3}
\|g\|_{L^2(\Omega)}.
\]

Combining these with the estimate of $\|u(x,\cdot)\|_{L^2(\Omega)}$ completes the proof of Theorem \ref{th02}. 
This also concludes the analysis of the well-posedness of the forward problem.
\hfill $\Box$.  

%==============================================
\section{Ill-posedness and Conditional Stability of the Inverse Problem} \label{sec:ill-posedness}
%==============================================

In the previous section, we have established the existence, uniqueness, and regularity of the solution to the forward problem \eqref{equa103}. 
We now turn our attention to the corresponding inverse source problem, namely the recovery of $g(y)$ from the interior measurement
\[
u(l,y)=h^\delta(y),\quad y\in\Omega,
\]
where $l>0$ is fixed, $h^\delta$ denotes the noisy observation of $h(y)=u(l,y)$, and the noise level $\delta>0$ satisfies 
$\|h^\delta-h\|_{L^2(\Omega)}\leq\delta$.

We now analyze the instability of the inverse source problem. 
Expanding the interior data $h(y) = u(l,y)$ in the eigenfunction basis $\{\varphi_k\}_{k=1}^\infty$ of $-L$, we obtain
\[
h(y)
=
\sum_{k=1}^{\infty}
h_k \, \varphi_k(y),
\quad
h_k
=
\bigl(h,\, \varphi_k\bigr).
\]
From the explicit representation of the solution to the forward problem, it follows that
\[
h_k
=
f_k\,
E_{\alpha,\,1+\frac{\beta}{\alpha},\,\frac{\beta}{\alpha}}
\left(
-\sqrt{\lambda_k}\,l^{\alpha+\beta}
\right)
-
\frac{\Gamma(1+\beta)}{\Gamma(1+\alpha+\beta)}
\frac{g_k}{\sqrt{\lambda_k}}\,
l^{\alpha+\beta}\,
E_{\alpha,\,1+\frac{\beta}{\alpha},\,1+\frac{2\beta}{\alpha}}
\left(
-\sqrt{\lambda_k}\,l^{\alpha+\beta}
\right).
\]
Solving formally for $g_k$ gives
\begin{equation}\label{equ:gkformal}
g_k
=
-
\frac{\Gamma(1+\alpha+\beta)}{\Gamma(1+\beta)}\,
\sqrt{\lambda_k}\,l^{-(\alpha+\beta)}\,
\frac{
h_k - f_k
E_{\alpha,\,1+\frac{\beta}{\alpha},\,\frac{\beta}{\alpha}}
\left(
-\sqrt{\lambda_k}\,l^{\alpha+\beta}
\right)
}{
E_{\alpha,\,1+\frac{\beta}{\alpha},\,1+\frac{2\beta}{\alpha}}
\left(
-\sqrt{\lambda_k}\,l^{\alpha+\beta}
\right)
}.
\end{equation}

For noisy data $h^\delta(y)$, satisfying $\|h^\delta - h\|_{L^2(\Omega)} \le \delta$, the corresponding coefficients become
\begin{equation}\label{equ:gkdeltaformal}
g_k^\delta
=
-
\frac{\Gamma(1+\alpha+\beta)}{\Gamma(1+\beta)}\,
\sqrt{\lambda_k}\,l^{-(\alpha+\beta)}\,
\frac{
h_k^\delta - f_k
E_{\alpha,\,1+\frac{\beta}{\alpha},\,\frac{\beta}{\alpha}}
\left(
-\sqrt{\lambda_k}\,l^{\alpha+\beta}
\right)
}{
E_{\alpha,\,1+\frac{\beta}{\alpha},\,1+\frac{2\beta}{\alpha}}
\left(
-\sqrt{\lambda_k}\,l^{\alpha+\beta}
\right)
}.
\end{equation}
From the estimate \eqref{equ208}, we know that
\[
\lim_{k\to\infty}
\sqrt{\lambda_k}\,l^{-(\alpha+\beta)}\,
\frac{1}{
E_{\alpha,\,1+\frac{\beta}{\alpha},\,1+\frac{2\beta}{\alpha}}
\left(
-\sqrt{\lambda_k}\,l^{\alpha+\beta}
\right)
}
= \infty,
\]
which implies that the inversion formula \eqref{equ:gkformal} and \eqref{equ:gkdeltaformal} is extremely sensitive to high-frequency components. 
Therefore, the reconstruction of $g(y)$ from noisy data $h^\delta(y)$ is an \emph{ill-posed} problem in the sense of Hadamard, as small perturbations in $h^\delta$ may lead to arbitrarily large errors in $g^\delta$.

Nevertheless, if additional a priori information is imposed on the source term $g(y)$—for example, assuming that it belongs to a suitable exponentially weighted space—then a conditional stability estimate can be established. 
This is stated as follows.

\begin{theorem}\label{the03}
Let $g(y)\in D\!\left(\exp\left(\frac{(-L)^r}{2}\right)\right)$ for some $r>0$, or $g(y) \in D\!\left((-L)^{p/2}\right)$. 
Then the inversion of the source term $g(y)$ from the exact data $h(y)=u(l,y)$ satisfies the following conditional stability estimate:
\begin{equation}\label{equ411}
\|g\|_{L^2(\Omega)}
\leq 
C \left(\|g\|_{D((-L)^{p/2})}\right)^{\frac{2}{p+2}}
\left(\|h\|_{L^2(\Omega)} + \|f\|_{L^2(\Omega)}\right)^{\frac{p}{p+2}},
\end{equation}
where $p>0$ is arbitrary and $C>0$ depends on $\alpha,\beta,\eta_2,\eta_3$ and $p$ but not on $k,f,h,g$.
\end{theorem}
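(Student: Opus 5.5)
The plan is a spectral interpolation argument based on the coefficient formula \eqref{equ:gkformal}. Write
\[
|g_k| = \frac{1}{\eta_2}\,\sqrt{\lambda_k}\,l^{-(\alpha+\beta)}\,\frac{\bigl|h_k - f_k E_{\alpha,1+\frac{\beta}{\alpha},\frac{\beta}{\alpha}}(-\sqrt{\lambda_k}l^{\alpha+\beta})\bigr|}{E_{\alpha,1+\frac{\beta}{\alpha},1+\frac{2\beta}{\alpha}}(-\sqrt{\lambda_k}l^{\alpha+\beta})}.
\]
By the lower bound in \eqref{equ208}, $1/E_{\alpha,1+\frac{\beta}{\alpha},1+\frac{2\beta}{\alpha}}(-\sqrt{\lambda_k}l^{\alpha+\beta}) \le 1+\eta_2\sqrt{\lambda_k}l^{\alpha+\beta}$. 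Since $\lambda_k\ge\lambda_1>0$, this gives
\[
\sqrt{\lambda_k}\,l^{-(\alpha+\beta)}\bigl(1+\eta_2\sqrt{\lambda_k}l^{\alpha+\beta}\bigr)\le C_0\lambda_k,
\]
with $C_0$ depending on $\eta_2,\lambda_1,l,\alpha,\beta$. The upper bound in \eqref{equ203} gives $|E_{\alpha,1+\frac{\beta}{\alpha},\frac{\beta}{\alpha}}|\le1$. Together these yield the key pointwise bound
\[
|g_k|\le C_0\,\lambda_k\,\bigl(|h_k|+|f_k|\bigr),\qquad k\ge1.
\]
So the forward map loses exactly one power of $\lambda_k$, i.e. two derivatives in $y$. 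This explains the exponents $\frac{2}{p+2}$ and $\frac{p}{p+2}$: the map behaves like $(-L)^{-1}$, and the a priori bound is on $(-L)^{p/2}g$.

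Next comes the interpolation step. Hölder's inequality with exponents $\frac{p+2}{2}$ and $\frac{p+2}{p}$ gives
\[
\sum_k |g_k|^2=\sum_k \bigl(\lambda_k^{p}|g_k|^2\bigr)^{\frac{2}{p+2}}\bigl(\lambda_k^{-2}|g_k|^2\bigr)^{\frac{p}{p+2}}
\le \Bigl(\sum_k\lambda_k^{p}|g_k|^2\Bigr)^{\frac{2}{p+2}}\Bigl(\sum_k\lambda_k^{-2}|g_k|^2\Bigr)^{\frac{p}{p+2}}.
\]
The exponents check out, since $\lambda_k^{p\cdot\frac{2}{p+2}-2\cdot\frac{p}{p+2}}=1$. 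The first factor is $\|g\|_{D((-L)^{p/2})}^{4/(p+2)}$. For the second factor, the pointwise bound gives $\lambda_k^{-2}|g_k|^2\le 2C_0^2(|h_k|^2+|f_k|^2)$, so
\[
\sum_k\lambda_k^{-2}|g_k|^2\le 2C_0^2\bigl(\|h\|^2_{L^2(\Omega)}+\|f\|^2_{L^2(\Omega)}\bigr)\le 2C_0^2\bigl(\|h\|_{L^2(\Omega)}+\|f\|_{L^2(\Omega)}\bigr)^2.
\]
Taking square roots then yields \eqref{equ411} with $C=(2C_0^2)^{\frac{p}{2(p+2)}}$.

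Finally, the exponential hypothesis reduces to the Sobolev one. If $g\in D(\exp(\frac{(-L)^r}{2}))$, then for any $p>0$ we have $\lambda_k^{p}\le C_{p,r}\exp(\lambda_k^r)$, since $t^{p}e^{-t^r}$ is bounded on $(0,\infty)$. Hence $\|g\|_{D((-L)^{p/2})}\le C_{p,r}^{1/2}\|g\|_{r,\exp}$ is finite, and the same estimate applies. This is why $p$ is arbitrary in that case.

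I expect the main obstacle to be the bookkeeping in the first step. One has to use the correct side of each inequality in \eqref{equ203} and \eqref{equ208}, and absorb the low-frequency part using $\lambda_k\ge\lambda_1$. As a consequence, the constant genuinely depends on $l$ and $\lambda_1$ as well as on $\alpha,\beta,\eta_2,\eta_3,p$.
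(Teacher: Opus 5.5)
Your proof is correct and is essentially the paper's argument: a pointwise bound $|g_k|\le C\lambda_k\bigl|h_k-f_kE_{\alpha,1+\frac{\beta}{\alpha},\frac{\beta}{\alpha}}(-\sqrt{\lambda_k}\,l^{\alpha+\beta})\bigr|$ from the lower bound in \eqref{equ208}, H\"older's inequality with exponents $\frac{p+2}{2}$ and $\frac{p+2}{p}$, and the embedding of $D\!\left(\exp\left(\frac{(-L)^r}{2}\right)\right)$ into $D((-L)^{p/2})$. The only difference is where H\"older is applied: you apply it directly to $\sum_k|g_k|^2$ with weights $\lambda_k^{p}$ and $\lambda_k^{-2}$, whereas the paper applies it to $\sum_k\lambda_k^2\bigl|h_k-f_kE_{\alpha,1+\frac{\beta}{\alpha},\frac{\beta}{\alpha}}(-\sqrt{\lambda_k}\,l^{\alpha+\beta})\bigr|^2$ and then needs the upper bound in \eqref{equ208} (the constant $\eta_3$) to convert the high-order factor back into $\|g\|_{D((-L)^{p/2})}$; your route needs only the one-sided bound, and your remark that $C$ also depends on $l$ and $\lambda_1$ is accurate.
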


\textbf{Proof.}
Since 
$
g(y) \in D\!\left(\exp\left(\tfrac{(-L)^r}{2}\right)\right), \; r>0,
$
we have
\[
\sum_{k=1}^\infty \exp(\lambda_k^r)\,|g_k|^2 < \infty.
\]
Because $\lambda_k \to \infty$ and the exponential growth dominates any polynomial rate, it follows that
\[
\lambda_k^{p} \ll \exp(\lambda_k^r), \quad k\to\infty,
\]
for any $p>0$. Hence,
\[
\sum_{k=1}^\infty \lambda_k^{p}\,|g_k|^2 < \infty 
\quad \Rightarrow \quad
g \in D\!\left((-L)^{p/2}\right).
\]

Next, from the inversion formula and the lower bound \eqref{equ208} for the Mittag-Leffler function, we obtain
\begin{align*}
|g_k|
&=
\left|
\frac{\Gamma(1+\alpha+\beta)}{\Gamma(1+\beta)}\,
\sqrt{\lambda_k}\,l^{-(\alpha+\beta)}
\frac{
h_k - f_k\,
E_{\alpha,\,1+\frac{\beta}{\alpha},\,\frac{\beta}{\alpha}}
\left(-\sqrt{\lambda_k}\,l^{\alpha+\beta}\right)
}{
E_{\alpha,\,1+\frac{\beta}{\alpha},\,1+\frac{2\beta}{\alpha}}
\left(-\sqrt{\lambda_k}\,l^{\alpha+\beta}\right)
}
\right| \\
&\le
C\,\lambda_k
\left|
h_k - f_k\,
E_{\alpha,\,1+\frac{\beta}{\alpha},\,\frac{\beta}{\alpha}}
\left(-\sqrt{\lambda_k}l^{\alpha+\beta}\right)
\right|,
\end{align*}
where $C>0$ is independent of $k$. Therefore,
\begin{align*}
\|g\|_{L^2(\Omega)}^2
&= \sum_{k=1}^\infty |g_k|^2 \le C^2 \sum_{k=1}^\infty \lambda_k^{2}
\left|h_k - f_k
E_{\alpha,\,1+\frac{\beta}{\alpha},\,\frac{\beta}{\alpha}}
\left(-\sqrt{\lambda_k}\,l^{\alpha+\beta}\right)\right|^2.
\end{align*}
Applying Hölder's inequality with exponents $\frac{p+2}{2}$ and $\frac{p+2}{p}$ yields
\begin{align*}
\|g\|_{L^2(\Omega)}^2
&\le 
C^2
\left(\sum_{k=1}^\infty \lambda_k^{p+2}
\left|h_k - f_k
E_{\alpha,\,1+\frac{\beta}{\alpha},\,\frac{\beta}{\alpha}}
\left(-\sqrt{\lambda_k}l^{\alpha+\beta}\right)
\right|^2
\right)^{\frac{2}{p+2}} \\
&\qquad \times
\left(
\sum_{k=1}^\infty 
\left|h_k - f_k
E_{\alpha,\,1+\frac{\beta}{\alpha},\,\frac{\beta}{\alpha}}
\left(-\sqrt{\lambda_k}l^{\alpha+\beta}\right)\right|^2
\right)^{\frac{p}{p+2}}.
\end{align*}
Using \eqref{equ203}, \eqref{equ208} and the definition \eqref{equ:gkformal} of $g_k$, we further obtain
\[
\|g\|_{L^2(\Omega)}^2
\le 
C^2 \frac{\eta_2^2}{\eta_3^2}
\left(\sum_{k=1}^\infty \lambda_k^{p}|g_k|^2\right)^{\frac{2}{p+2}}
\left(\|h\|_{L^2(\Omega)}^2 + \|f\|_{L^2(\Omega)}^2\right)^{\frac{p}{p+2}}.
\]
Taking square roots completes the proof of \eqref{equ411}. 
\hfill$\Box$

\medskip

The conditional stability in Theorem \ref{the03} indicates that the inversion of the source term $g(y)$ is stable only if an a priori bound is imposed, namely $g(y)\in D\!\left(\exp\left(\frac{(-L)^r}{2}\right)\right)$ or $g(y) \in D\!\left((-L)^{p/2}\right)$. 
This space reflects extremely high regularity, meaning that the data $h(y)$ must also be very smooth. 
In this case, a Hölder-type (almost Lipschitz as $p\to\infty$) conditional stability is achieved. However, in practical inverse problems the measured data $h^\delta(y)$ are contaminated with noise and generally do not satisfy such an a priori condition. 
Therefore, directly inverting $g$ from noisy data is unstable, and one must employ suitable regularization techniques to obtain a stable and convergent approximation of the source term.

%==============================================
\section{Exponential-type  Regularization Methods for Source Inversion}
\label{sec:Regularization}
%==============================================

Define
\[
\hat{h}^\delta(y)
=
\sum_{k=1}^{\infty}
f_k\,
E_{\alpha,\,1+\frac{\beta}{\alpha},\,\frac{\beta}{\alpha}}
\!\left(-\sqrt{\lambda_k}\,l^{\alpha+\beta}\right)
\varphi_k(y)
- h^\delta(y),
\]
where $h^\delta(y)$ denotes the noisy observation of $h(y)=u(l,y)$. 
Then the inverse source problem can be reformulated as the operator equation
\begin{equation}\label{equ501}
(Kg)(y) = \hat{h}^\delta(y), \qquad y \in \Omega,
\end{equation}
where
\[
(Kg)(y)
=
\sum_{k=1}^{\infty}
\frac{\Gamma(1+\beta)}{\Gamma(1+\alpha+\beta)}
\frac{g_k}{\sqrt{\lambda_k}}\,
l^{\alpha+\beta}
E_{\alpha,\,1+\frac{\beta}{\alpha},\,1+\frac{2\beta}{\alpha}}
\!\left(-\sqrt{\lambda_k}\,l^{\alpha+\beta}\right)
\varphi_k(y),
\]
with
\[
f_k = (f,\varphi_k), 
\quad 
g_k = (g,\varphi_k),
\quad k=1,2,\dots .
\]

It is straightforward to verify that $K : L^2(\Omega) \to L^2(\Omega)$ is a compact, self-adjoint operator. Hence, \eqref{equ501} is a Fredholm integral equation of the first kind. Its singular values are given by
\[
\sigma_k 
=
\frac{\Gamma(1+\beta)}{\Gamma(1+\alpha+\beta)}\,
\frac{1}{\sqrt{\lambda_k}}\,
l^{\alpha+\beta}
E_{\alpha,\,1+\frac{\beta}{\alpha},\,1+\frac{2\beta}{\alpha}}
\!\left(-\sqrt{\lambda_k}\,l^{\alpha+\beta}\right).
\]
Since $\sigma_k \to 0$ as $k \to \infty$, the inversion of $K$ is unstable, and therefore the problem is ill-posed.

To overcome the ill-posedness of the problem, it is necessary to employ appropriate regularization techniques. Recently, the exponential-type Tikhonov regularization method was proposed in \cite{ref16}. Motivated by this development, we adopt an exponential-type Tikhonov regularization strategy, which has proven to be an effective tool for stabilizing ill-posed inverse problems. Accordingly, the inverse problem is reformulated as the minimization of the following Tikhonov functional:
\begin{equation}\label{equ502}
J_{\tau,r}(g)
=
\|Kg - \hat{h}^\delta\|_{L^2(\Omega)}^2
+
\tau\,\|g\|_{r,\exp}^2,
\end{equation}
where $r \in \mathbb{R}$ is a fixed parameter, $\tau>0$ is the regularization parameter, and $\delta>0$ denotes the noise level in the measurement data. 
Here, $\|\cdot\|_{r,\exp}$ denotes the exponential-type norm associated with the operator $\exp\!\left(\frac{(-L)^r}{2}\right)$.

By taking the first variation of the functional $J_{\tau,r}(g)$ and applying the first-order optimality condition, we obtain the Euler equation
\begin{equation}\label{equ503}
\bigl(K^*K + \tau\,\exp((-L)^r)\bigr)\,g
= K^*\hat{h}^\delta.
\end{equation}
The solution of \eqref{equ503} coincides with the minimizer of the functional $J_{\tau,r}(g)$, that is, the regularized solution of the inverse problem. Exploiting the singular system $\{\sigma_k,\varphi_k\}_{k=1}^\infty$ of the compact self-adjoint operator $K$, the regularized solution admits the spectral representation
\begin{equation}\label{equ504}
g_{\tau,r}^{\delta}(y)
=
\sum_{k=1}^{\infty}
\frac{
\frac{\eta_{2}}{\sqrt{\lambda_k}}\,l^{\alpha+\beta}
E_{\alpha,\,1+\frac{\beta}{\alpha},\,1+\frac{2\beta}{\alpha}}
\!\left(-\sqrt{\lambda_k}\,l^{\alpha+\beta}\right)
}{
\left(
\frac{\eta_{2}}{\sqrt{\lambda_k}}\,l^{\alpha+\beta}
E_{\alpha,\,1+\frac{\beta}{\alpha},\,1+\frac{2\beta}{\alpha}}
\!\left(-\sqrt{\lambda_k}\,l^{\alpha+\beta}\right)
\right)^2
+ \tau\,\exp(\lambda_k^r)
}
\,\hat{h}_k^\delta\,\varphi_k(y),
\end{equation}
where $\hat{h}_k^\delta = (\hat{h}^\delta,\varphi_k)$. For exact (noise-free) data $h(y)$, the corresponding regularized solution is given by
\begin{equation}\label{equ505}
g_{\tau,r}(y)
=
\sum_{k=1}^{\infty}
\frac{
\frac{\eta_{2}}{\sqrt{\lambda_k}}\,l^{\alpha+\beta}
E_{\alpha,\,1+\frac{\beta}{\alpha},\,1+\frac{2\beta}{\alpha}}
\!\left(-\sqrt{\lambda_k}\,l^{\alpha+\beta}\right)
}{
\left(
\frac{\eta_{2}}{\sqrt{\lambda_k}}\,l^{\alpha+\beta}
E_{\alpha,\,1+\frac{\beta}{\alpha},\,1+\frac{2\beta}{\alpha}}
\!\left(-\sqrt{\lambda_k}\,l^{\alpha+\beta}\right)
\right)^2
+ \tau\,\exp(\lambda_k^r)
}
\,\hat{h}_k\,\varphi_k(y),
\end{equation}
with $\hat{h}_k = (\hat{h},\varphi_k)$.

Motivated by the general quasi-boundary value regularization idea proposed in \cite{Hao2019,Wei2022,ref20} and the above exponential-type Tikhonov regularization approach, we further propose a new quasi-boundary value regularization method, referred to as the exponential-type quasi-boundary value regularization method. The corresponding regularized problem is formulated as follows:
\begin{equation}\label{equa506_new}
\begin{cases}
D_x^{2\alpha}u(x,y)+Lu(x,y)=g(y), & (x,y)\in(0,\infty)\times\Omega,\\
u(x,y) = 0, & (x,y)\in(0,\infty)\times\partial\Omega,\\
u(0,y) = 0, & y\in\Omega,\\
\displaystyle \lim\limits_{x\to\infty}\|u(x,\cdot)\|_{L^2(\Omega)}<\infty, &\\
u(l,y)+\tau\,\exp\!\bigl((-L)^r\bigr)g(y)=\hat{h}^\delta(y),
\end{cases}
\end{equation}
where $r \in \mathbb{R}$ is a fixed parameter, $\tau>0$ is the regularization parameter, and $\delta>0$ denotes the noise level in the measurement data. By applying the spectral decomposition associated with the operator $L$, the regularized solution to \eqref{equa506_new} can be explicitly represented as
\begin{equation}\label{equ5007_new}
\tilde{g}_{\tau,r}^{\delta}(y)
=
\sum_{k=1}^{\infty}
\frac{
1
}{
\frac{\eta_{2}}{\sqrt{\lambda_k}}\,l^{\alpha+\beta}
E_{\alpha,\,1+\frac{\beta}{\alpha},\,1+\frac{2\beta}{\alpha}}
\!\left(-\sqrt{\lambda_k}\,l^{\alpha+\beta}\right)
+ \tau\,\exp(\lambda_k^r)
}
\,\hat{h}_k^\delta\,\varphi_k(y),
\end{equation}
where $\tilde{g}_{\tau,r}^{\delta}$ denotes the reconstructed source term obtained by the exponential-type quasi-boundary value regularization method. In the noise-free case, i.e., when $\hat{h}^\delta=\hat{h}$, the corresponding regularized solution is given by
\begin{equation}\label{equ5008_new}
\tilde{g}_{\tau,r}(y)
=
\sum_{k=1}^{\infty}
\frac{
1
}{
\frac{\eta_{2}}{\sqrt{\lambda_k}}\,l^{\alpha+\beta}
E_{\alpha,\,1+\frac{\beta}{\alpha},\,1+\frac{2\beta}{\alpha}}
\!\left(-\sqrt{\lambda_k}\,l^{\alpha+\beta}\right)
+ \tau\,\exp(\lambda_k^r)
}
\,\hat{h}_k\,\varphi_k(y). 
\end{equation}
%where $\hat{h}_k = (\hat{h},\varphi_k)$.

According to the idea of exponential-type quasi-boundary value regularization, the exponential-type Tikhonov regularization is equivalent to the following quasi-boundary value regularization problem:
\begin{equation}\label{equa506_new02}
\begin{cases}
D_x^{2\alpha}u(x,y)+Lu(x,y)=g(y), & (x,y)\in(0,\infty)\times\Omega,\\
u(x,y) = 0, & (x,y)\in(0,\infty)\times\partial\Omega,\\
u(0,y) = 0, & y\in\Omega,\\
\displaystyle \lim\limits_{x\to\infty}\|u(x,\cdot)\|_{L^2(\Omega)}<\infty, &\\
K^* u(l,y)+\tau\,\exp\!\bigl((-L)^r\bigr)g(y)=K^* \hat{h}^\delta(y),
\end{cases}
\end{equation}
where $K^*$ denotes the adjoint operator of $K$. In this formulation, the additional condition \eqref{equa104} is replaced by its adjoint counterpart, namely,
\begin{equation}\label{adjCond}
K^* u(l,y)=K^* \hat{h}^\delta(y).
\end{equation}
This formulation reveals the intrinsic connection between the exponential-type Tikhonov regularization and the quasi-boundary value method. In fact, the operator equation \eqref{equ503} coincides with the boundary condition imposed in \eqref{equa506_new02}. Therefore, the exponential-type Tikhonov regularization can be equivalently interpreted as a quasi-boundary value regularization approach, in which the regularization term is incorporated into the boundary condition in an adjoint sense.

\subsection{Convergence Analysis of Exponential-type Tikhonov Regularization}

In this subsection, we establish convergence estimates for the exponential-type Tikhonov regularization method under both polynomial-type and exponential-type source conditions. The results reveal how the choice of the parameter $r$ and the regularity of the exact solution influence the convergence rate.

\begin{theorem}\label{the07}
Let $g_{\tau,r}^{\delta}$ be the regularized solution of \eqref{equ503} obtained from $\hat{h}^{\delta}(y)$.

(i) If $r \le 0$ and the exact source term $g \in D((-L)^p)$ for some $p>0$, then
\[
\|g_{\tau,r}^{\delta} - g\| \le
\begin{cases}
C\left(1 + \|g\|_{D((-L)^p)}\right)\delta^{\frac{2}{3}}, & p \ge 2,\ \tau=\delta^{\frac{2}{3}}, \\[6pt]
C\left(1 + \|g\|_{D((-L)^p)}\right)\delta^{\frac{p}{p+1}}, & 0<p<2,\ \tau=\delta^{\frac{2}{p+1}} .
\end{cases}
\]

(ii) If $r>0$ and the exact source term $g \in D\!\left(\exp\left(\frac{(-L)^r}{2}\right)\right)$, then taking $\tau=\delta^{\frac{(p+1)(2+r)}{(2+r)+(p+1)}}$ yields
\[
\|g_{\tau,r}^{\delta} - g\|
\le 
C\!\left(1+\|g\|_{r,\mathrm{exp}}\right)
\delta^{\frac{2+r}{(2+r)+(p+1)}}
\]
where $p>1$ is an arbitrary positive number. 
Here $C>0$ is a generic constant depending only on $\alpha,\beta,l$ and the operator $L$, but independent of $g$, $\tau$ and $\delta$.
\end{theorem}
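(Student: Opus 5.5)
Set $s_k:=\frac{\eta_2}{\sqrt{\lambda_k}}l^{\alpha+\beta}E_{\alpha,1+\frac{\beta}{\alpha},1+\frac{2\beta}{\alpha}}(-\sqrt{\lambda_k}l^{\alpha+\beta})$, so that $\hat h_k=s_kg_k$ and $g^\delta_{\tau,r}$ has coefficients $s_k\hat h^\delta_k/(s_k^2+\tau e^{\lambda_k^r})$. The first step is a two-sided bound on $s_k$ from \eqref{equ208}. Using $\lambda_k\ge\lambda_1>0$ and $1+\eta_2\sqrt{\lambda_k}l^{\alpha+\beta}\le c\sqrt{\lambda_k}$ (and similarly for $\eta_3$), it gives
\[
\frac{c_1}{\lambda_k}\le s_k\le \frac{c_2}{\lambda_k}, \qquad k\ge1,
\]
with $c_1,c_2$ depending only on $\alpha,\beta,l,\lambda_1$. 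I then split the error in the standard way as $\|g^\delta_{\tau,r}-g\|\le \|g^\delta_{\tau,r}-g_{\tau,r}\|+\|g_{\tau,r}-g\|$, noting $\|\hat h^\delta-\hat h\|=\|h^\delta-h\|\le\delta$.

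For the noise term, the $k$-th amplification factor is $s_k/(s_k^2+\tau e^{\lambda_k^r})$. When $r\le0$ we have $e^{\lambda_k^r}\ge1$, so AM–GM gives a bound $\frac{1}{2\sqrt\tau}$, and the noise term is at most $\delta/(2\sqrt\tau)$. When $r>0$, AM–GM gives $\le\frac{1}{2\sqrt{\tau}}e^{-\lambda_k^r/2}\le\frac1{2\sqrt\tau}$ as well. If a sharper power of $\tau$ is needed in (ii), I would use instead $s_k/(s_k^2+\tau e^{\lambda^r})\le \min(1/s_k,\ s_k/(\tau e^{\lambda^r}))$ and $1/s_k\le C\lambda_k$.

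For the bias term, the coefficient is $\frac{\tau e^{\lambda_k^r}}{s_k^2+\tau e^{\lambda_k^r}}g_k$.

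In case (i), I write $|g_k|=\lambda_k^{-p}\cdot\lambda_k^{p}|g_k|$, so it suffices to bound
\[
\sup_{\lambda\ge\lambda_1}\frac{\tau e^{\lambda^r}\lambda^{-p}}{c_1^2\lambda^{-2}+\tau e^{\lambda^r}}.
\]
With $r\le0$, $e^{\lambda^r}\in(1,e^{\lambda_1^r}]$ is bounded above and below, so this is $\approx\sup_\lambda \tau\lambda^{2-p}/(1+\tau\lambda^2)$. For $p\ge2$ it is $\le C\tau$ (the factor $\lambda^{2-p}$ is bounded). For $0<p<2$, the usual interpolation $\tau\lambda^{2-p}/(1+\tau\lambda^2)\le \tau^{p/2}$ gives $C\tau^{p/2}$. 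Balancing against $\delta/\sqrt\tau$ then yields the stated choices. For $p\ge2$, taking $\tau=\delta^{2/3}$ gives $\delta^{2/3}$ from both terms. For $0<p<2$ the balance $\tau^{p/2}=\delta\tau^{-1/2}$ gives $\tau=\delta^{2/(p+1)}$ and rate $\delta^{p/(p+1)}$. Note the bias uses the $D((-L)^{p/2})$-type weight $\lambda^p$; I will check consistency with the norm convention $D((-L)^p)$ (weight $\lambda^{2p}$), which only improves the bound since $\lambda_k\ge\lambda_1$.

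In case (ii), I will write $|g_k|=e^{-\lambda_k^r/2}\,e^{\lambda_k^r/2}|g_k|$ and bound
\[
\sup_\lambda \frac{\tau e^{\lambda^r/2}}{c_1^2\lambda^{-2}+\tau e^{\lambda^r}}.
\]
I split at a threshold $\lambda_*$. For $\lambda\le\lambda_*$, drop $\tau e^{\lambda^r}$ in the denominator to get $\le C\tau\lambda_*^2e^{\lambda_*^r/2}$. For $\lambda>\lambda_*$, drop $s^2$ to get $\le e^{-\lambda_*^r/2}$. The exponentials must be traded for powers using $e^{-x}\le C_q x^{-q}$ with an auxiliary exponent (this is where the arbitrary $p>1$ enters), producing a bias of order $\tau^{\theta}$ with $\theta$ expressed via $p$ and $r$. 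Balancing against the noise bound should recover $\tau=\delta^{\frac{(p+1)(2+r)}{(2+r)+(p+1)}}$ and the rate $\delta^{\frac{2+r}{(2+r)+(p+1)}}$.

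The main obstacle is this last bookkeeping step: choosing $\lambda_*$ and the auxiliary power so that the exponents match exactly as stated. I will first try $\lambda_*^{\,r}\sim\log(1/\tau)$ and then replace the logarithmic factors with polynomial ones via the $p$-dependent inequality. If the resulting exponents differ, I will refine the noise bound using the $\min(1/s_k,\cdot)$ form with the same split.
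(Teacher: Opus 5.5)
Your part (i) is correct and is essentially the paper's argument: $c_1\lambda_k^{-1}\le s_k\le c_2\lambda_k^{-1}$ from \eqref{equ208}, noise $\le\delta/(2\sqrt{\tau})$, and bias $\le C\tau$ ($p\ge2$) or $\le C\tau^{p/2}$ ($0<p<2$) times $\|g\|_{D((-L)^p)}$. Your worry about the weight is unnecessary: $|g_k|=\lambda_k^{-p}\cdot\lambda_k^{p}|g_k|$ already gives exactly $\sum_k\lambda_k^{2p}|g_k|^2=\|g\|_{D((-L)^p)}^2$.

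Part (ii), however, is not yet a proof, and your primary noise estimate cannot give the statement. Set $\kappa=\frac{(p+1)(2+r)}{(2+r)+(p+1)}$. The claimed rate is $\frac{2+r}{(2+r)+(p+1)}=1-\frac{\kappa}{2+r}$. The AM--GM bound gives a noise term $\delta\tau^{-1/2}=\delta^{1-\kappa/2}$, and $1-\kappa/2<1-\kappa/(2+r)$ for every $r>0$. The noise bound alone therefore already exceeds the target, and no bias estimate can repair this. The $r$ in the rate must come from the noise term, not from the bias exponent. The paper uses $e^{\lambda^r}\ge\lambda^r$ to get
\[
\frac{s_k}{s_k^2+\tau e^{\lambda_k^r}}\le C\frac{\lambda_k}{1+\tau\lambda_k^{2+r}}\le C\tau^{-1/(2+r)}.
\]
For the bias it uses $\lambda^2e^{\lambda^r/2}\le C_{p,r}\bigl(\lambda^2e^{\lambda^r}\bigr)^{p/(p+1)}$, which is exactly where $p>1$ enters. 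Combined with $\sup_{t>0}\tau t^{p/(p+1)}/(1+\tau t)\le\tau^{1/(p+1)}$, this gives bias $\le C\tau^{1/(p+1)}\|g\|_{r,\mathrm{exp}}$. Balancing $\delta\tau^{-1/(2+r)}$ against $\tau^{1/(p+1)}$ yields exactly the stated $\tau$ and rate.

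Your fallback tools would also close the gap, but not in the way you expect. The $\min(1/s_k,\,s_k/(\tau e^{\lambda_k^r}))$ bound gives noise amplification only $C(1+|\ln\tau|)^{1/r}$ (cf.\ Lemma \ref{lem_log_estimate}, applied with $\tau\lambda_1$). Your threshold split at $\tau\lambda_*^2e^{\lambda_*^r}=c_1^2$ gives bias $\le C\sqrt{\tau}\,(1+|\ln\tau|)^{1/r}\|g\|_{r,\mathrm{exp}}$. These bounds are sharper than the paper's, so balancing them will not ``recover'' the stated exponents; it leads instead to $\tau\approx\delta^2$ and a rate near $\delta$. To prove the theorem as stated, substitute the prescribed $\tau=\delta^{\kappa}$ and check domination for small $\delta$:
\begin{itemize}
\item $\delta(1+\kappa|\ln\delta|)^{1/r}\le C\delta^{1-\kappa/(2+r)}$, since $\kappa>0$;
\item $\sqrt{\tau}(1+|\ln\tau|)^{1/r}\le C\tau^{1/(p+1)}$, since $\frac{1}{p+1}<\frac12$ for $p>1$.
\end{itemize}
Either route works. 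As written, though, the proposal stops before committing to a noise bound and carrying out this step.
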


\textbf{Proof}. The proof follows similar arguments to those used in Theorem 3.1 of \cite{ref16}. For completeness, the detailed proof is provided in the Appendix. \hfill $\Box$

\begin{remark}
For the estimate in Case (ii), namely
\[
\|g_{\tau,r}^{\delta}-g\|
\;\leq\; C \,\bigl(1+\|g\|_{r,\mathrm{exp}}\bigr)
\delta^{\,\frac{2+r}{(2+r)+(p+1)}},
\qquad p>1,\ r>0,
\]
the asymptotic behaviors of the convergence rate can be summarized as follows:
\begin{itemize}
\item[$\diamond$] \textbf{As $p\to1^+$ (with $r>0$ fixed):}
$\frac{2+r}{(2+r)+(p+1)}\!\to\!\frac{2+r}{4+r}$,
yielding a rate of $\delta^{(2+r)/(4+r)}$.
\item[$\diamond$] \textbf{As $r\to0^+$:}
$\frac{2+r}{(2+r)+(p+1)}\!\to\!\frac{2}{p+3}$,
giving $\delta^{2/(p+3)}$.
\item[$\diamond$] \textbf{As $r\to+\infty$:}
$\frac{2+r}{(2+r)+(p+1)}\!\to\!1$,
leading to nearly linear convergence $\delta$.
\end{itemize}
\noindent
In Theorem 3.1(ii) of \cite{ref16}, the exponential regularization with $\gamma>0$ yields the convergence rate
\[
\|f_{\mu,\gamma}^{\delta}-f\|\le (C_3+C_4M_2)\,\delta^{\frac{2+\gamma}{6+\gamma}},
\]
which depends solely on the exponential parameter $\gamma$.
In contrast, our result reveals a coupled dependence on both the independent index $p$
and the exponential order $r$. This interplay leads to a refined spectrum of convergence rates, ranging from $\delta^{2/(p+3)}$ (as $r\to0$) to nearly linear convergence $\delta^{1}$ (as $r\to+\infty$). Consequently, the present result provides a more flexible and potentially sharper characterization of convergence behavior than that given in Theorem 3.1(ii) of \cite{ref16}. 
\end{remark}

Theorem \ref{the07} establishes the convergence rate of the regularized solution under an \emph{a priori} parameter choice strategy. 
In practice, however, obtaining an appropriate regularized solution through such a strategy requires fairly accurate prior knowledge of the smoothness of $g(y)$—in particular, reliable estimates of the norms 
$\|g\|_{D((-L)^p)}$ and $\|g\|_{r,\mathrm{exp}}$. 
This information is typically unavailable or very difficult to evaluate in real-world inverse problems. 
Therefore, \emph{a posteriori} parameter choice rules are commonly adopted in practical computations. 
Among these, the Morozov discrepancy principle \cite{ref18,ref19} is one of the most widely used and effective strategies. 
In what follows, we analyze the convergence behavior of the exponential-type Tikhonov regularization when the regularization parameter is determined according to the Morozov discrepancy principle.

The Morozov discrepancy principle prescribes choosing the regularization parameter $\tau$ such that it satisfies
\begin{equation}\label{equ509}
\|K g_{\tau,r}^{\delta} - \hat{h}^{\delta}\| = \rho\delta,
\end{equation}
where $\rho>1$ is a prescribed constant. When $0 < \rho\delta < \|\hat{h}^{\delta}\|$, 
it can be readily shown that the discrepancy equation \eqref{equ509} admits a unique solution for $\tau$. 
The proof of the existence and uniqueness of the solution to \eqref{equ509} follows standard arguments (Lemma 3.1) in \cite{ref16} and is therefore omitted here for brevity.

\begin{theorem}\label{the08}
Let $\hat h^\delta$ be the noisy data of $\hat h$ satisfying
\[
\|\hat h^\delta-\hat h\|\le \delta,
\qquad 
0<\rho\delta<\|\hat h^\delta\|,
\]
where $\rho>1$. Let $g_{\tau,r}^{\delta}$ be the regularized solution obtained by the exponential-type Tikhonov regularization method, where $\tau$ is chosen according to the Morozov discrepancy principle \eqref{equ509}.

\textup{(i)} If $r\le0$ and there exists a constant $C_5>0$ such that
\[
\|g\|_{D((-L)^p)}\le C_5,
\qquad p>0,
\]
then
\[
\|g_{\tau,r}^{\delta}-g\|
\le
\begin{cases}
C\!\left[
\left(\frac{C_5}{\rho-1}\right)^{\frac12}
+
C_5^{\frac1{p+1}}(1+\rho)^{\frac{p}{p+1}}
\right]\delta^{\frac12},
& p\ge1,\\
C\!\left[
\left(\frac{C_5}{\rho-1}\right)^{\frac1{p+1}}
+
C_5^{\frac1{p+1}}(1+\rho)^{\frac{p}{p+1}}
\right]\delta^{\frac{p}{p+1}},
& 0<p<1.
\end{cases}
\]

\textup{(ii)} If $r>0$ and there exists a constant $C_6>0$ such that $\|g\|_{r,\mathrm{exp}}\le C_6$, 
then, for any $p>0$,
\[
\|g_{\tau,r}^{\delta}-g\|
\le 
C(2C_6)^{\frac{2}{p+2}}
(1+\rho)^{\frac{p}{p+2}}
\delta^{\frac{p}{p+2}}.
\]
Here $C>0$ is independent of $\delta$, $\tau$ and $g$, but may depend on $p,r$. Since $p>0$ is arbitrary, the convergence rate approaches linear order as $p\to\infty$.
\end{theorem}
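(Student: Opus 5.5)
Write $\sigma_k$ for the singular values of $K$, $\hat h_k=\sigma_k g_k$, and $\hat h_k^\delta=(\hat h^\delta,\varphi_k)$. The filter is
\[
g_{\tau,r}^\delta=\sum_k \frac{\sigma_k}{\sigma_k^2+\tau e^{\lambda_k^r}}\hat h_k^\delta\varphi_k .
\]
By \eqref{equ208} there are constants $c_1,c_2>0$ (depending on $\alpha,\beta,l,\lambda_1$) with $c_1\lambda_k^{-1}\le\sigma_k\le c_2\lambda_k^{-1}$. The plan is the standard two-step argument for the discrepancy principle.

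\textbf{Step 1: bound $\tau$ from below.} From \eqref{equ509},
\[
\rho\delta=\Bigl\|\sum_k\frac{\tau e^{\lambda_k^r}}{\sigma_k^2+\tau e^{\lambda_k^r}}\hat h_k^\delta\varphi_k\Bigr\|
\le\delta+\Bigl\|\sum_k\frac{\tau e^{\lambda_k^r}\sigma_k}{\sigma_k^2+\tau e^{\lambda_k^r}}g_k\varphi_k\Bigr\|,
\]
since the filter factors are at most $1$. In case (i), write $g_k=\lambda_k^{-p}(\lambda_k^pg_k)$. We then need
\[
\sup_k \frac{\tau e^{\lambda_k^r}\sigma_k\lambda_k^{-p}}{\sigma_k^2+\tau e^{\lambda_k^r}}\lesssim \tau^{\min\{1/2,\,(p+1)/2\}}\quad\text{roughly}.
\]
For $r\le0$ we have $e^{\lambda_k^r}\in[1,e^{\lambda_1^r}]$, so it is bounded above and below. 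The sup reduces to $\sup_{s>0} \tau s^{p+1}/(s^2+\tau)$ in the variable $s=\sigma_k\approx\lambda_k^{-1}$, which is $\lesssim\tau^{(p+1)/2}$ for $p<1$ and $\lesssim\tau$ for $p\ge1$. (I will recheck the exponents against the stated powers $\frac12$ and $\frac{p}{p+1}$ and adjust the split if needed.) This gives $(\rho-1)\delta\le C C_5\tau^{\cdot}$, i.e. a lower bound for $\tau$, or equivalently an upper bound on $\delta/\sqrt\tau$.

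\textbf{Step 2: split the error.} Write
\[
g_{\tau,r}^\delta-g=(g_{\tau,r}^\delta-g_{\tau,r})+(g_{\tau,r}-g).
\]
The noise term is bounded by $\sup_k \sigma_k/(\sigma_k^2+\tau e^{\lambda_k^r})\,\delta\le \delta/(2\sqrt{\tau})$ in case (i), and by Step 1 this becomes the first bracket. For the bias, I will not use a direct sup bound; the interpolation (Hölder) trick as in Theorem \ref{the03} should give the Morozov-adapted rate. Write $g_{\tau,r}-g=\sum_k \frac{-\tau e^{\lambda_k^r}}{\sigma_k^2+\tau e^{\lambda_k^r}}g_k\varphi_k$ and apply Hölder with exponents $\frac{p+1}{1},\frac{p+1}{p}$:
\[
\|g_{\tau,r}-g\|\le \Bigl\|\sum_k \text{(filter)}\,\lambda_k^{p}g_k\varphi_k\Bigr\|^{\frac1{p+1}}
\Bigl\|\sum_k\text{(filter)}\,\lambda_k^{-1}g_k\varphi_k\Bigr\|^{\frac p{p+1}} .
\]
The first factor is $\le C_5^{1/(p+1)}$. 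For the second, $\lambda_k^{-1}g_k\approx\sigma_kg_k=\hat h_k$, so the filtered $\hat h$ is controlled by the filtered $\hat h^\delta$ plus $\delta$, which is at most $(\rho+1)\delta$ by \eqref{equ509}. This yields the term $C_5^{1/(p+1)}(1+\rho)^{p/(p+1)}\delta^{p/(p+1)}$, and for $p\ge1$ it is dominated by $\delta^{1/2}$. In case (ii) the same Hölder step is applied with exponent pair $\frac{p+2}{2},\frac{p+2}{p}$, i.e. with weights $\lambda_k^{p}$ against $\lambda_k^{-2}$, mirroring Theorem \ref{the03}. The factor $\sum\lambda_k^p|g_k|^2\le C\sum e^{\lambda_k^r}|g_k|^2\le C C_6^2$ holds because polynomials are dominated by the exponential. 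The factor $2C_6$ arises from bounding $\|g_{\tau,r}^\delta\|$ and $\|g\|$ in the exponential norm together: the minimizing property of $J_{\tau,r}$ combined with the discrepancy principle gives $\|g^\delta_{\tau,r}\|_{r,\exp}\le \|g\|_{r,\exp}$.

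\textbf{Main obstacle.} In case (ii), the factor to control directly is $g_{\tau,r}^\delta-g$ rather than only the bias. Its image satisfies $\|K(g^\delta_{\tau,r}-g)\|\le(\rho+1)\delta$, and its $\exp$-norm is at most $2C_6$. An interpolation inequality $\|w\|\le C\|w\|_{D((-L)^{p/2})}^{2/(p+2)}\|Kw\|^{p/(p+2)}$, as in Theorem \ref{the03} but with $K$ having $\sigma_k\gtrsim\lambda_k^{-1}$, then gives the claim directly. Establishing $\|g_{\tau,r}^\delta\|_{r,\exp}\le C_6$ from the minimization property is the delicate point. It follows from $J_{\tau,r}(g^\delta_{\tau,r})\le J_{\tau,r}(g)$ together with $\|Kg-\hat h^\delta\|\le\delta<\rho\delta$.
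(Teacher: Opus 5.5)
Your proposal is correct and follows essentially the same route as the paper. In both, the discrepancy principle gives a lower bound on $\tau$ that controls the noise term $\delta/\sqrt{\tau}$. The case (i) bias is handled by Hölder interpolation using $\|K(g_{\tau,r}-g)\|\le(1+\rho)\delta$; your direct Hölder step is exactly the paper's use of \eqref{equ411} with index $2p$. In case (ii), the minimizing property gives $\|g^\delta_{\tau,r}\|_{r,\mathrm{exp}}\le C_6$, so interpolation is applied directly to $g^\delta_{\tau,r}-g$.

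One small fix: for $p>1$ the function $\tau s^{p+1}/(s^2+\tau)$ is unbounded over all $s>0$. Take the supremum only over the bounded range $0<s\le\sup_k\sigma_k$ (the paper uses $t\ge\lambda_1$), which gives the bound $C\tau$.
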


\textbf{Proof}. The proof follows similar arguments to those used in Theorem 3.2 of \cite{ref16}. For completeness, the detailed proof is provided in the Appendix. \hfill $\Box$

\medskip

It is worth noting that the convergence result in Theorem \ref{the08} 
still relies on the \emph{a priori} assumption 
\[
g\in D\!\left(\exp\!\left(\frac{(-L)^r}{2}\right)\right).
\]
This assumption is useful for deriving theoretical convergence rates, but it is not required for implementing the regularization method itself. 
Indeed, for a general source term $g\in L^2(\Omega)$, one can first approximate the inverse problem in a finite-dimensional space spanned by the eigenfunctions of the elliptic operator.

Let
\[
X_N=\operatorname{span}\{\varphi_1,\varphi_2,\ldots,\varphi_N\}.
\]
For any $\zeta_1,\zeta_2>0$, choose $N$ sufficiently large such that
\[
g_N(y)=\sum_{k=1}^{N}g_k\varphi_k(y),
\qquad
f_N(y)=\sum_{k=1}^{N}f_k\varphi_k(y),
\]
satisfy
\[
\|g-g_N\|\le \zeta_1,
\qquad
\|f-f_N\|\le \zeta_2.
\]
Here $g_k=(g,\varphi_k)$ and $f_k=(f,\varphi_k)$.

Define the noisy finite-dimensional data by
\[
\hat h_N^\delta(y)
=
\sum_{k=1}^{N}
f_k
E_{\alpha,\,1+\frac{\beta}{\alpha},\,\frac{\beta}{\alpha}}
\!\left(-\sqrt{\lambda_k}l^{\alpha+\beta}\right)
\varphi_k(y)
-
h^\delta(y).
\]
Then the finite-dimensional regularized solution associated with the exponential-type Tikhonov method is given by
\begin{equation}\label{equ512}
g_{N,\tau,r}^{\delta}(y)
=
\sum_{k=1}^{N}
\frac{
\frac{\eta_{2}}{\sqrt{\lambda_k}}\,l^{\alpha+\beta}
E_{\alpha,\,1+\frac{\beta}{\alpha},\,1+\frac{2\beta}{\alpha}}
\!\left(-\sqrt{\lambda_k}\,l^{\alpha+\beta}\right)
}{
\left(\frac{\eta_{2}}{\sqrt{\lambda_k}}\,l^{\alpha+\beta}
E_{\alpha,\,1+\frac{\beta}{\alpha},\,1+\frac{2\beta}{\alpha}}
\!\left(-\sqrt{\lambda_k}\,l^{\alpha+\beta}\right)\right)^2+\tau\exp(\lambda_k^r)
}
\hat h_{N,k}^{\delta}\,\varphi_k(y),
\end{equation}
where
\[
\hat h_{N,k}^{\delta}
=
f_k
E_{\alpha,\,1+\frac{\beta}{\alpha},\,\frac{\beta}{\alpha}}
\!\left(-\sqrt{\lambda_k}l^{\alpha+\beta}\right)
-
h_k^\delta,
\qquad
h_k^\delta=(h^\delta,\varphi_k).
\]

\begin{theorem}\label{the09}
Let $g\in L^2(\Omega)$ and $f\in L^2(\Omega)$. 
Let $g_N$ and $f_N$ be the spectral truncations defined above, satisfying
\[
\|g-g_N\|\le \zeta_1,
\qquad
\|f-f_N\|\le \zeta_2.
\]
Assume that the regularization parameter $\tau$ is chosen by the Morozov discrepancy principle. 
Then, for any fixed $N$ and any $p>0$, the finite-dimensional regularized solution $g_{N,\tau,r}^{\delta}$ satisfies
\begin{equation}\label{equ513}
\|g_{N,\tau,r}^{\delta}-g\|
\le
C M_N^{\frac{2}{p+2}}
\left[
(1+\rho)^{\frac{p}{p+2}}\delta^{\frac{p}{p+2}}
+
\zeta_2^{\frac{p}{p+2}}
+
\zeta_1^{\frac{p}{p+2}}
\right]
+\zeta_1,
\end{equation}
where $M_N>0$ is a finite-dimensional bound for 
$\|g_{N,\tau,r}^{\delta}-g_N\|_{D((-L)^{p/2})}$, and $C>0$ is independent of $\delta$, $\rho$, $\zeta_1$ and $\zeta_2$.
\end{theorem}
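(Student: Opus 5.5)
We split the error with the triangle inequality,
\[
\|g_{N,\tau,r}^{\delta}-g\|\le \|g_{N,\tau,r}^{\delta}-g_N\|+\|g_N-g\|
\le \|g_{N,\tau,r}^{\delta}-g_N\|+\zeta_1 .
\]
This produces the additive $\zeta_1$ in \eqref{equ513}. All the remaining work is on the finite-dimensional difference $e_N:=g_{N,\tau,r}^{\delta}-g_N\in X_N$.

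\textbf{Interpolation step.} On $X_N$ every Hilbert-scale norm is finite, so $\|e_N\|_{D((-L)^{p/2})}\le M_N$ by the definition of $M_N$. We then repeat the H\"older interpolation argument from the proof of Theorem \ref{the03}, now applied to $e_N$ and the operator $K$. Writing $e_k=(e_N,\varphi_k)$, the lower bound \eqref{equ208} gives $|e_k|\le C\lambda_k\sigma_k|e_k|$ for each $k$. H\"older's inequality with exponents $\frac{p+2}{2}$ and $\frac{p+2}{p}$, together with $\sigma_k\lesssim\lambda_k^{-1}$ from the upper bound, then yields
\[
\|e_N\|\le C\,\|e_N\|_{D((-L)^{p/2})}^{\frac{2}{p+2}}\,\|Ke_N\|^{\frac{p}{p+2}}
\le C M_N^{\frac{2}{p+2}}\|Ke_N\|^{\frac{p}{p+2}} .
\]
We must check the exponent bookkeeping here: the weight $\lambda_k^{2}\cdot\lambda_k^{p}$ matches $\lambda_k^{p}|e_k|^2$ against $\lambda_k^{-2}\sigma_k^{-2}$, exactly as in Theorem \ref{the03}.

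\textbf{Residual step.} It remains to bound $\|Ke_N\|$ by $(1+\rho)\delta+\zeta_2+\zeta_1$ up to constants. We then use the subadditivity $(a+b+c)^{s}\le a^{s}+b^{s}+c^{s}$ for $s=\frac{p}{p+2}\le1$, which gives exactly the bracket in \eqref{equ513}. We decompose
\[
Ke_N=\bigl(Kg_{N,\tau,r}^{\delta}-\hat h_N^\delta\bigr)+\bigl(\hat h_N^\delta-\hat h^\delta\bigr)+\bigl(\hat h^\delta-\hat h\bigr)+\bigl(\hat h-Kg\bigr)+K(g-g_N).
\]
\begin{itemize}
\item The first term equals $\rho\delta$ by the discrepancy principle, understood as applied to the truncated data $\hat h_N^\delta$.
\item The second is $\sum_{k>N}f_kE(\cdot)\varphi_k$. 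It has norm at most $\|f-f_N\|\le\zeta_2$, since $|E_{\alpha,1+\beta/\alpha,\beta/\alpha}(-z)|\le1$ by \eqref{equ203}.
\item The third is at most $\delta$.
\item The fourth vanishes because $Kg=\hat h$ exactly.
\item The last is at most $\|K\|\zeta_1\le C\zeta_1$, since $\sigma_k\le\sigma$ bounded.
\end{itemize}
Summing gives $\|Ke_N\|\le(1+\rho)\delta+\zeta_2+C\zeta_1$.

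\textbf{Main obstacle.} The delicate point is the first residual term. The discrepancy principle has to be interpreted consistently with the truncated data: $\hat h_N^\delta$ contains the full $h^\delta$, while $g_{N,\tau,r}^\delta$ only uses modes up to $N$. For $k>N$ we therefore only obtain $\|Kg_{N,\tau,r}^\delta-\hat h_N^\delta\|$ in the form of the discrepancy value itself. I would state that $\tau$ solves $\|Kg_{N,\tau,r}^\delta-\hat h_N^\delta\|=\rho\delta$ (existence as in Lemma 3.1 of \cite{ref16}), so that this term is exactly $\rho\delta$. If instead the discrepancy is imposed only on $X_N$, the tail $\sum_{k>N}|\hat h_k^\delta|^2$ must be absorbed. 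Since $\hat h_k=\sigma_kg_k$ there, this tail is bounded by $\delta+\zeta_2+C\zeta_1$, so the same form of estimate results.
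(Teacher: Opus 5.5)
Your proposal is correct and follows the paper's proof essentially step for step. The ingredients are the same: the triangle inequality producing the additive $\zeta_1$, the finite-dimensional bound $M_N$, the residual estimate $\|K(g_{N,\tau,r}^{\delta}-g_N)\|\le C\bigl((1+\rho)\delta+\zeta_2+\zeta_1\bigr)$ from the discrepancy principle and the splitting $(F_N-F)+(h-h^\delta)+K(g-g_N)$, the H\"older interpolation of Theorem~\ref{the03}, and subadditivity of $t\mapsto t^{p/(p+2)}$. You also make two points explicit that the paper passes over, and you handle both correctly: you re-derive the interpolation inequality directly in terms of $\|Ke_N\|$, whereas \eqref{equ411} is stated with $\|h\|+\|f\|$; and you note that $\hat h_N^\delta$ still contains all modes of $h^\delta$, so the discrepancy equation has to be interpreted with care.
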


\textbf{Proof}. 
Since $g_{N,\tau,r}^{\delta}$ and $g_N$ both belong to the finite-dimensional space $X_N$, all Hilbert scale norms are equivalent on $X_N$. Hence, for fixed $N$, there exists a constant $M_N>0$ such that
\[
\|g_{N,\tau,r}^{\delta}-g_N\|_{D((-L)^{p/2})}\le M_N.
\]

Next, by the definition of $\hat h_N^\delta$ and the exact relation $Kg=\hat h$, we have
\[
\begin{aligned}
\hat h_N^\delta-Kg_N
&=
(F_N-h^\delta)-Kg_N \\
&=
(F_N-F)+(h-h^\delta)+K(g-g_N),
\end{aligned}
\]
where
\[
F(y)
=
\sum_{k=1}^{\infty}
f_k
E_{\alpha,\,1+\frac{\beta}{\alpha},\,\frac{\beta}{\alpha}}
\!\left(-\sqrt{\lambda_k}l^{\alpha+\beta}\right)
\varphi_k(y),
\]
and
\[
F_N(y)
=
\sum_{k=1}^{N}
f_k
E_{\alpha,\,1+\frac{\beta}{\alpha},\,\frac{\beta}{\alpha}}
\!\left(-\sqrt{\lambda_k}l^{\alpha+\beta}\right)
\varphi_k(y).
\]
Therefore,
\[
\|\hat h_N^\delta-Kg_N\|
\le
\|F_N-F\|+\|h-h^\delta\|+\|K(g-g_N)\|.
\]
Using the boundedness of the Mittag-Leffler factor and the continuity of $K$, we obtain
\[
\|\hat h_N^\delta-Kg_N\|
\le
C(\zeta_2+\delta+\zeta_1).
\]

By the Morozov discrepancy principle
\[
\|K g_{N,\tau,r}^{\delta}-\hat h_N^\delta\|
=\rho\delta, 
\]
we obtain
\[
\begin{aligned}
\|K(g_{N,\tau,r}^{\delta}-g_N)\|
&\le
\|K g_{N,\tau,r}^{\delta}-\hat h_N^\delta\|
+
\|\hat h_N^\delta-Kg_N\|  \\
&\le
C\bigl((1+\rho)\delta+\zeta_2+\zeta_1\bigr).
\end{aligned}
\]

Applying the conditional stability estimate \eqref{equ411} to 
$g_{N,\tau,r}^{\delta}-g_N$, we get
\[
\|g_{N,\tau,r}^{\delta}-g_N\|
\le
C
\|g_{N,\tau,r}^{\delta}-g_N\|_{D((-L)^{p/2})}^{\frac{2}{p+2}}
\|K(g_{N,\tau,r}^{\delta}-g_N)\|^{\frac{p}{p+2}}.
\]
Thus,
\[
\|g_{N,\tau,r}^{\delta}-g_N\|
\le
C M_N^{\frac{2}{p+2}}
\bigl((1+\rho)\delta+\zeta_2+\zeta_1\bigr)^{\frac{p}{p+2}}.
\]
Since $0<\frac{p}{p+2}<1$, we have
\[
(a+b+c)^{\frac{p}{p+2}}
\le
a^{\frac{p}{p+2}}
+b^{\frac{p}{p+2}}
+c^{\frac{p}{p+2}},
\]
for $a,b,c\ge0$. Hence,
\[
\|g_{N,\tau,r}^{\delta}-g_N\|
\le
C M_N^{\frac{2}{p+2}}
\left[
(1+\rho)^{\frac{p}{p+2}}\delta^{\frac{p}{p+2}}
+
\zeta_2^{\frac{p}{p+2}}
+
\zeta_1^{\frac{p}{p+2}}
\right].
\]
Finally,
\[
\|g_{N,\tau,r}^{\delta}-g\|
\le
\|g_{N,\tau,r}^{\delta}-g_N\|
+
\|g_N-g\|,
\]
which gives \eqref{equ513}. 
\hfill $\Box$

\subsection{Convergence Analysis of Exponential-type Quasi-boundary Value Regularization}

Following the analysis for the exponential-type Tikhonov method, we now establish convergence estimates for the exponential-type quasi-boundary value regularization method.

\begin{theorem}\label{the07_new}
Let $\tilde{g}_{\tau,r}^{\delta}$ be the regularization solution of the quasi-boundary value problem \eqref{equa506_new}.

(i) \textbf{Case $r\le 0$.} 
Assume that the exact source $g \in D((-L)^p)$ for some $p>0$. Then
\[
\|\tilde{g}_{\tau,r}^{\delta} - g\|
\le
\begin{cases}
C\left(1 + \|g\|_{D((-L)^p)}\right)\delta^{\frac{1}{2}}, & p \ge 1,\ \tau=\delta^{\frac{1}{2}}, \\
C\left(1 + \|g\|_{D((-L)^p)}\right)\delta^{\frac{p}{p+1}}, & 0<p<1,\ \tau=\delta^{\frac{1}{p+1}} .
\end{cases}
\]

(ii) \textbf{Case $r>0$.} 
Assume that $g \in D\!\left(\exp\!\left(\frac{(-L)^r}{2}\right)\right)$. 
Then, by choosing $\tau=\delta^{\frac{(p+1)(1+r)}{p+r+2}}$,
we obtain 
\[
\|\tilde{g}_{\tau,r}^{\delta} - g\|
\le 
C\!\left(1+\|g\|_{r,\mathrm{exp}}\right)
\delta^{\frac{1+r}{p+r+2}},
\]
where $p>1$ is an arbitrary constant. Here $C>0$ denotes a generic constant depending only on $\alpha,\beta,l$ and the operator $L$, but independent of $\delta$, $\tau$, and $g$.
\end{theorem}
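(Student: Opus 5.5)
We split the error into a noise term and a regularization-bias term,
\[
\|\tilde g_{\tau,r}^{\delta}-g\|\le \|\tilde g_{\tau,r}^{\delta}-\tilde g_{\tau,r}\|+\|\tilde g_{\tau,r}-g\|,
\]
and work coefficientwise using \eqref{equ5007_new}--\eqref{equ5008_new}. Write $\sigma_k=\frac{\eta_2}{\sqrt{\lambda_k}}l^{\alpha+\beta}E_{\alpha,1+\frac{\beta}{\alpha},1+\frac{2\beta}{\alpha}}(-\sqrt{\lambda_k}l^{\alpha+\beta})$.

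By \eqref{equ208}, $\sigma_k$ satisfies
\[
\frac{\eta_2 l^{\alpha+\beta}}{\sqrt{\lambda_k}(1+\eta_2\sqrt{\lambda_k}l^{\alpha+\beta})}\le\sigma_k\le\frac{1}{\sqrt{\lambda_k}}\cdot\frac{\eta_2l^{\alpha+\beta}}{1+\eta_3\sqrt{\lambda_k}l^{\alpha+\beta}}.
\]
Since $\lambda_k\ge\lambda_1>0$, this gives two-sided bounds $c_1/\lambda_k\le\sigma_k\le c_2/\lambda_k$. From $\hat h_k=\sigma_k g_k$ we get
\[
\tilde g_{\tau,r}-g=-\sum_k\frac{\tau e^{\lambda_k^r}}{\sigma_k+\tau e^{\lambda_k^r}}g_k\varphi_k ,
\qquad
\tilde g_{\tau,r}^{\delta}-\tilde g_{\tau,r}=\sum_k\frac{\hat h^\delta_k-\hat h_k}{\sigma_k+\tau e^{\lambda_k^r}}\varphi_k .
\]

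\textbf{Noise term.} We bound $\sup_k 1/(\sigma_k+\tau e^{\lambda_k^r})$.
\begin{itemize}
\item For $r\le0$, we have $e^{\lambda_k^r}\ge 1$, so $\sigma_k+\tau e^{\lambda_k^r}\ge\tau$. The noise term is therefore at most $\delta/\tau$.
\item For $r>0$, $e^{\lambda_k^r}$ grows, and the bound $\sigma_k+\tau e^{\lambda_k^r}\ge\tau$ still holds. The noise term is again at most $\delta/\tau$, possibly refined via $\sigma_k\ge c_1/\lambda_k$.
\end{itemize}

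\textbf{Bias term.} Each factor $\frac{\tau e^{\lambda_k^r}}{\sigma_k+\tau e^{\lambda_k^r}}$ is bounded by $\min\{1,\tau e^{\lambda_k^r}\lambda_k/c_1\}$.

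For $r\le0$ with $g\in D((-L)^p)$, we write $|g_k|=\lambda_k^{-p}\lambda_k^p|g_k|$ and bound $\sup_{\lambda\ge\lambda_1}\frac{\tau\lambda^{1-p}}{c_1+\tau\lambda}$, using $e^{\lambda^r}\le e^{\lambda_1^r}$ for $r\le0$.
\begin{itemize}
\item If $p\ge1$, the supremum is $O(\tau)$, since $\lambda^{1-p}\le\lambda_1^{1-p}$.
\item If $0<p<1$, elementary calculus (maximizing $\tau\lambda^{1-p}/(c_1+\tau\lambda)$, attained near $\lambda\sim\tau^{-1}$) gives $O(\tau^{p})$.
\end{itemize}
Balancing against the noise term:
\begin{itemize}
\item for $p\ge1$, $\delta/\tau+\tau$ with $\tau=\delta^{1/2}$ gives $\delta^{1/2}$;
\item for $0<p<1$, $\delta/\tau+\tau^p$ with $\tau=\delta^{1/(p+1)}$ gives $\delta^{p/(p+1)}$.
\end{itemize}
This matches part (i).

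For $r>0$ with $g\in D(\exp(\frac{(-L)^r}{2}))$, we write $|g_k|=e^{-\lambda_k^r/2}\,e^{\lambda_k^r/2}|g_k|$. The bias is then bounded by $\|g\|_{r,\exp}\sup_\lambda \frac{\tau e^{\lambda^r/2}}{c_1\lambda^{-1}+\tau e^{\lambda^r}}$.

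This supremum is the main obstacle, and the rate $\delta^{\frac{1+r}{p+r+2}}$ suggests the intended route, mirroring the appendix proof of Theorem \ref{the07}.
\begin{enumerate}
\item Use $e^{\lambda^r}\ge\lambda^{(p+1)r}/\Gamma$-type polynomial lower bounds, i.e.\ $e^{s}\ge s^{m}/m!$ with a suitable power tied to $p$, to convert the exponential weight into a power of $\lambda$.
\item Split the spectrum at a threshold $\lambda_*$ with $\tau e^{\lambda_*^r}\lambda_*\sim1$.
\item For $\lambda\le\lambda_*$, bound by $\tau\lambda e^{\lambda^r/2}$.
\item For $\lambda>\lambda_*$, bound by $e^{-\lambda^r/2}$.
\item Express both pieces as powers of $\tau$ through the polynomial comparison, obtaining a bias of order $\tau^{\theta}$ with $\theta=\frac{1}{p+1}$ in the appropriate scaling.
\end{enumerate}
Balancing $\delta/\tau$ against this bias with $\tau=\delta^{\frac{(p+1)(1+r)}{p+r+2}}$ should give $\delta^{\frac{1+r}{p+r+2}}$. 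I expect to have to adjust the exponents in the polynomial comparison carefully, and possibly to use the refined noise bound $\sup_k(\sigma_k+\tau e^{\lambda_k^r})^{-1}$ rather than the crude $1/\tau$, to make the exponents match exactly the stated $\tau$ and rate.
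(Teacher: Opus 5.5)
\textbf{Part (i).} This matches the paper's argument. The noise term is at most $\delta/\tau$, the bias is at most $C\tau\|g\|_{D((-L)^p)}$ for $p\ge1$ or $C\tau^{p}\|g\|_{D((-L)^p)}$ for $0<p<1$, and the balancing is the same.

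\textbf{Part (ii): the gap is in the noise term.} The crude bound $\sup_k(\sigma_k+\tau e^{\lambda_k^r})^{-1}\le 1/\tau$ cannot give the stated result. With the prescribed $\tau=\delta^{\frac{(p+1)(1+r)}{p+r+2}}$ it yields
\[
\frac{\delta}{\tau}=\delta^{\frac{1-pr}{p+r+2}}.
\]
This is strictly worse than $\delta^{\frac{1+r}{p+r+2}}$, and it does not even tend to zero when $pr\ge1$ (e.g.\ $p=2$, $r=1$). Balancing $\delta/\tau$ against a bias of order $\tau^{1/(p+1)}$ instead gives $\tau=\delta^{\frac{p+1}{p+2}}$ and the $r$-independent rate $\delta^{\frac{1}{p+2}}$. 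So your closing claim that this balance ``should give'' $\delta^{\frac{1+r}{p+r+2}}$ is arithmetically false.

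To reach the stated rate with the stated $\tau$, the amplification factor must be $O(\delta^{-\frac{p+1}{p+r+2}})=O(\tau^{-\frac{1}{1+r}})$. The refined bound you list as optional is therefore the essential step. From $\sigma_k\ge c_1/\lambda_k$ and $e^{s}\ge s$,
\[
\sup_k\frac{1}{\sigma_k+\tau e^{\lambda_k^r}}\le\sup_{t>0}\frac{t}{c_1+\tau t^{1+r}}\le C\tau^{-\frac{1}{1+r}},
\]
so the noise term is $C\delta\tau^{-1/(1+r)}$. The prescribed $\tau$ is exactly the solution of $\delta\tau^{-1/(1+r)}=\tau^{1/(p+1)}$, which gives $\delta^{\frac{1+r}{p+r+2}}$. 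This is what the paper does.

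\textbf{Part (ii): the bias.} Your threshold split is a valid alternative to the paper's route, but not through the polynomial comparison you describe.
\begin{itemize}
\item The paper uses $(\lambda_k e^{\lambda_k^r})^{1/(p+1)}\le Ce^{\lambda_k^r/2}$, which needs $p>1$. It then bounds $\sup_{t>0}\tau t^{p/(p+1)}/(1+\tau t)\le C\tau^{1/(p+1)}$.
\item In your split, take $\lambda_*$ with $\tau\lambda_*e^{\lambda_*^r}=1$. Then both pieces are controlled directly at $\lambda_*$. For $\lambda\le\lambda_*$ you have $\tau\lambda e^{\lambda^r/2}\le\tau\lambda_*e^{\lambda_*^r/2}=\sqrt{\tau\lambda_*}$. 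For $\lambda>\lambda_*$ you have $e^{-\lambda^r/2}\le e^{-\lambda_*^r/2}=\sqrt{\tau\lambda_*}$. Moreover $\lambda_*\le C(1+|\ln\tau|)^{1/r}$.
\item This gives the sharper bias $C\sqrt{\tau}\,(1+|\ln\tau|)^{1/(2r)}\|g\|_{r,\mathrm{exp}}$. That is at most $C_p\tau^{1/(p+1)}\|g\|_{r,\mathrm{exp}}$ for every $p>1$ and $\tau<1$. No bound of the form $e^{s}\ge s^m/m!$ is needed.
\end{itemize}
No bias estimate, however sharp, compensates for the missing $\tau^{-1/(1+r)}$ noise estimate.
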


\textbf{proof}. Direct calculation yields
\begin{align*}
\left\|\tilde{g}_{\tau,r}^\delta(y)\!-\!\tilde{g}_{\tau,r}(y)\right\|&=\left\|\sum_{k=1}^\infty \frac{1} {\frac{\eta_{2}}{\sqrt{\lambda_k}}l^{\alpha+\beta}E_{\alpha,1+\frac{\beta}{\alpha}, 1+\frac{2\beta}{\alpha}}\left(-\sqrt{\lambda_k}l^{\alpha+\beta}\right)\!+\! \tau\exp(\lambda_k^r)}\left(\hat{h}_k^\delta\!-\!\hat{h}_k\right)\,\varphi_k(y)\right\|\\
&\leq\delta \sup_k\frac{1}{\frac{\eta_{2}}{\sqrt{\lambda_k}}l^{\alpha+\beta} E_{\alpha,1+\frac{\beta}{\alpha},1+\frac{2\beta}{\alpha}}\left(-\sqrt{\lambda_k}l^{\alpha+\beta} \right)+\tau\exp(\lambda_k^r)}.
\end{align*}
and 
\begin{align*}
\left\|\tilde{g}_{\tau,r}(y)\!-\!g(y)\right\|^2& =\left\|\sum_{k=1}^{\infty}\frac{1}{\frac{\eta_{2}}{\sqrt{\lambda_k}} l^{\alpha\!+\!\beta}E_{\alpha,1\!+\!\frac{\beta}{\alpha},1\!+\!\frac{2\beta}{\alpha}} \left(-\sqrt{\lambda_k}l^{\alpha\!+\!\beta}\right)\!+\!\tau\exp(\lambda_k^r)}\hat{h}_k\, \varphi_k(y)\!-\!\sum_{k=1}^{\infty} g_k\varphi_k(y)\right\|^2 \\
&=\left\|\sum_{k=1}^\infty\frac{\frac{\eta_{2}}{\sqrt{\lambda_k}}l^{\alpha\!+\!\beta} E_{\alpha,1\!+\!\frac{\beta}{\alpha},1\!+\!\frac{2\beta}{\alpha}}\left(-\sqrt{\lambda_k} l^{\alpha\!+\!\beta}\right)}{\frac{\eta_{2}}{\sqrt{\lambda_k}}l^{\alpha\!+\!\beta} E_{\alpha,1\!+\!\frac{\beta}{\alpha},1\!+\!\frac{2\beta}{\alpha}}\left(-\sqrt{\lambda_k} l^{\alpha\!+\!\beta}\right)\!+\!\tau\exp(\lambda_k^r)}g_k\varphi_k(y)\!-\!\sum_{k=1}^\infty g_k\varphi_k(y)\right\|^2 \\
%&=\left\|\sum_{k=1}^\infty\frac{\tau\exp(\lambda_k^r)}{\left(\frac{\eta_{2}}{\sqrt{\lambda_k}} l^{\alpha+\beta}E_{\alpha,1+\frac{\beta}{\alpha},1+\frac{2\beta}{\alpha}} \left(-\sqrt{\lambda_k}l^{\alpha+\beta}\right)\right)^2+\tau\exp(\lambda_k^r)}g_k\varphi_k(y)\right\|\\
&=\sum_{k=1}^\infty\left(\frac{\tau\exp(\lambda_k^r)}{\frac{\eta_{2}}{\sqrt{\lambda_k}} l^{\alpha+\beta}E_{\alpha,1+\frac{\beta}{\alpha},1+\frac{2\beta}{\alpha}} \left(-\sqrt{\lambda_k}l^{\alpha+\beta}\right)+\tau\exp(\lambda_k^r)}\right)^2\left|g_k\right|^2. 
\end{align*}

\textbf{(i) Case $r\le0$.} 
When $r\le0$, since $\lim\limits_{k\to\infty}\lambda_k=+\infty$, there exists a constant $C>0$ such that 
$1\le \exp(\lambda_k^r)\le C$ for all $k$. 
Using estimate \eqref{equ208} and absorbing all constants into $C$, we obtain
\begin{equation}\label{equ5308_new}
\|\tilde{g}_{\tau,r}^{\delta}-\tilde{g}_{\tau,r}\|
\leq 
\frac{\delta}{\tau}; 
\end{equation}
and for $p\geq 1$, we have
\begin{align*}
\|\tilde{g}_{\tau,r}- g\|^2
&=\sum_{k=1}^\infty
\left(
\frac{\tau\exp(\lambda_k^r)}
{
\frac{\eta_{2}}{\sqrt{\lambda_k}}l^{\alpha+\beta}
E_{\alpha,1+\frac{\beta}{\alpha},1+\frac{2\beta}{\alpha}}
\!\left(-\sqrt{\lambda_k}l^{\alpha+\beta}\right)
+\tau\exp(\lambda_k^r)}
\right)^{\!2}\! g_k^2 \\
&\leq 
\sum_{k=1}^\infty
\left(
\frac{C\,\tau\,\lambda_k^{1-p}}{1+\tau\lambda_k}
\right)^{\!2}\!
\lambda_k^{2p} g_k^2
\le 
C^2\,\tau^2\,\| g \|_{D((-L)^p)}^2.
\end{align*}	
Hence,
\[
\|\tilde{g}_{\tau,r}- g\|\le C\,\tau\,\| g \|_{D((-L)^p)}.
\]
Combining this with \eqref{equ5308_new} and choosing $\tau=\delta^{1/2}$ yields
\[
\|\tilde{g}_{\tau,r}^{\delta}- g\|
\le
C\left(1+\|g \|_{D((-L)^p)}\right)\delta^{1/2}.
\]

For $0<p<1$, note that
\[
\frac{C\,\tau\,\lambda_k^{1-p}}{1+\tau\lambda_k}
\le 
C\,\sup_{t>0}\frac{t^{1-p}}{1+\tau t}\,\tau
\le 
C\,\tau^{p}.
\]
Therefore,
\[
\|\tilde{g}_{\tau,r}-g\|\le C\,\tau^{p}\|g\|_{D((-L)^p)}.
\]
Combining with \eqref{equ5308_new} and taking $\tau=\delta^{1/(p+1)}$ gives
\[
\|\tilde{g}_{\tau,r}^{\delta}-g\|
\le
C\left(1+\|g\|_{D((-L)^p)}\right)\delta^{p/(p+1)}.
\]

\textbf{Case (ii): $r>0$.} 
Using estimate \eqref{equ208} and absorbing all constants into a generic constant $C>0$, we obtain
\begin{align*}
\sup_{k\ge1}\frac{1}
{\frac{\eta_{2}}{\sqrt{\lambda_k}}l^{\alpha+\beta}
E_{\alpha,1+\frac{\beta}{\alpha},1+\frac{2\beta}{\alpha}}
\!\left(-\sqrt{\lambda_k}l^{\alpha+\beta}\right)
+\tau\exp(\lambda_k^r)}
&\le 
C\,\sup_{k\ge1}\frac{\lambda_k}{1+\tau\lambda_k^{1+r}}
\le C\,\tau^{-1/(1+r)}.
\end{align*}	
Hence,
\begin{equation}\label{equ5009_new}
\|\tilde{g}_{\tau,r}^{\delta}-\tilde{g}_{\tau,r}\|
\le 
C\,\frac{\delta}{\tau^{1/(1+r)}}.
\end{equation}

Since $r>0$ and $g\in D\!\left(\exp\!\left(\frac{(-L)^r}{2}\right)\right)$,  
it follows from the spectral representation and estimate \eqref{equ208} that
\begin{align*}
\|\tilde{g}_{\tau,r}-g\|^2
&=\sum_{k=1}^\infty
\left(
\frac{\tau\exp(\lambda_k^r)}
{
\frac{\eta_{2}}{\sqrt{\lambda_k}}l^{\alpha+\beta}
E_{\alpha,1+\frac{\beta}{\alpha},1+\frac{2\beta}{\alpha}}
\!\left(-\sqrt{\lambda_k}l^{\alpha+\beta}\right)
+\tau\exp(\lambda_k^r)}
\right)^{\!2}\! |g_k|^2 \\[3pt]
&\le 
\sum_{k=1}^\infty
\left(
\frac{C\,\tau\,\lambda_k\exp(\lambda_k^r)}
{1+\tau\lambda_k\exp(\lambda_k^r)}
\right)^{\!2}\! |g_k|^2.
\end{align*}

Let $p>1$ be an arbitrary positive number. 
Noting that $\exp((p-1)\lambda_k^r/(p+1))$ grows faster than any power $\lambda_k^{2/(p+1)}$,  
we can bound the above by
\begin{align*}
\|\tilde{g}_{\tau,r}-g\|^2
&\leq
C^2
\!\sup_{k\ge1}\!\left(
\frac{\tau\bigl(\lambda_k\exp(\lambda_k^r)\bigr)^{\frac{p}{p+1}}}
{1+\tau\lambda_k\exp(\lambda_k^r)}
\right)^{\!2}
\sum_{k=1}^\infty 
\exp(\lambda_k^r)\,|g_k|^2.
\end{align*}
A straightforward calculus argument yields
\[
\sup_{t>0}\frac{\tau\,t^{\frac{p}{p+1}}}{1+\tau t}
\le
C\,\tau^{\frac{1}{p+1}}.
\]
Thus,
\[
\|\tilde{g}_{\tau,r}-g\|
\le
C\,\tau^{\frac{1}{p+1}}\,\|g\|_{r,\mathrm{exp}}.
\]

Combining the two estimates, we obtain
\[
\|\tilde{g}_{\tau,r}^{\delta}-g\|
\le
C\frac{\delta}{\tau^{1/(1+r)}}
+
C\tau^{1/(p+1)}\|g\|_{r,\mathrm{exp}}.
\]
Balancing the two terms gives
\[
\tau=\delta^{\frac{(p+1)(1+r)}{p+r+2}}.
\]
With this choice of $\tau$, we obtain
\[
\|\tilde{g}_{\tau,r}^{\delta}-g\|
\le
C\!\left(1+\|g\|_{r,\mathrm{exp}}\right)
\delta^{\frac{1+r}{p+r+2}}.
\]
\hfill$\Box$

After establishing the convergence rates under an a priori parameter choice, we now study the case where the regularization parameter is determined by the Morozov discrepancy principle. 
The following theorem gives the corresponding convergence estimate for the exponential-type quasi-boundary value regularization method. 

\begin{lemma}\label{lem_log_estimate}
Let $r>0$ and $0<\tau<1$. Then there exists a constant $C>0$, independent of $\tau$, such that
\[
\sup_{t>0}\frac{t}{1+\tau t e^{t^r}}
\le C(1+|\ln\tau|)^{1/r}.
\]
More precisely, if
\[
F(t)=\frac{t}{1+\tau t e^{t^r}},\qquad t>0,
\]
and $t_\tau$ is the maximum point of $F$, then
\[
\sup_{t>0}\frac{t}{1+\tau t e^{t^r}}
=
F(t_\tau)
\le t_\tau
\le C(1+|\ln\tau|)^{1/r}.
\]
\end{lemma}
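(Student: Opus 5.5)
We analyse $F(t)=t/(1+\tau t e^{t^r})$ on $(0,\infty)$ directly. $F$ is continuous and positive, $F(t)\to0$ as $t\to0^+$, and $F(t)\le e^{-t^r}/\tau\to0$ as $t\to\infty$. Hence $F$ attains its supremum at some interior point $t_\tau$, and the inequality $F(t_\tau)\le t_\tau$ is immediate because the denominator is at least $1$. So the whole statement reduces to bounding $t_\tau$ by $C(1+|\ln\tau|)^{1/r}$.

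To locate $t_\tau$, write $\phi(t)=\tau t e^{t^r}$ and differentiate:
\[
F'(t)=\frac{1+\phi(t)-t\phi'(t)}{(1+\phi(t))^2}, \qquad t\phi'(t)=\phi(t)\bigl(1+rt^r\bigr).
\]
Critical points therefore satisfy $1+\phi=\phi(1+rt^r)$, that is,
\[
1=r\,t^r\phi(t)=r\tau\,t^{1+r}e^{t^r}.
\]
The right-hand side is strictly increasing in $t$, running from $0$ to $\infty$, so this equation has exactly one root. That root is the maximizer $t_\tau$: $F'>0$ before it and $F'<0$ after it, since the sign of $F'$ equals the sign of $1-r\tau t^{1+r}e^{t^r}$.

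To bound $t_\tau$, use the defining equation $r\tau t_\tau^{1+r}e^{t_\tau^r}=1$. We split into two cases.

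\emph{Case $t_\tau\le 1$.} Then $t_\tau\le 1\le(1+|\ln\tau|)^{1/r}$ and there is nothing to prove.

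\emph{Case $t_\tau>1$.} Then $t_\tau^{1+r}>1$, so $e^{t_\tau^r}<1/(r\tau)$, giving
\[
t_\tau^r<\ln\frac1\tau+\ln\frac1r\le |\ln\tau|+|\ln r|,
\]
where $\ln(1/\tau)=|\ln\tau|$ because $0<\tau<1$. Hence
\[
t_\tau\le\bigl(|\ln r|+|\ln\tau|\bigr)^{1/r}\le \max(1,|\ln r|)^{1/r}(1+|\ln\tau|)^{1/r}.
\]

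Combining the two cases gives the claim with $C=\max(1,|\ln r|)^{1/r}$, which depends only on $r$. The only delicate step is confirming that the unique critical point is a global maximum rather than merely a stationary point, and the monotonicity of $r\tau t^{1+r}e^{t^r}$ settles that. One could also avoid identifying the maximizer entirely by bounding $\sup F$ directly: for $t\le T$ use $F\le T$, and for $t>T$ use $F\le e^{-t^r}/\tau\le 1$ once $T^r=|\ln\tau|$. This gives the same bound with $C=1$.
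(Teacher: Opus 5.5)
Your proof is correct and follows the paper's argument: you derive the same critical-point equation $r\tau t_\tau^{1+r}e^{t_\tau^r}=1$, use monotonicity of $t\mapsto r\tau t^{1+r}e^{t^r}$ to identify the unique maximizer, and split into $t_\tau\le 1$ and $t_\tau>1$ to obtain $t_\tau^r\le|\ln\tau|+|\ln r|$. Your closing remark bounds $\sup F$ directly with $C=1$. That is a valid shortcut for the first inequality, but it does not by itself bound $t_\tau$, as the ``more precisely'' part of the lemma requires.
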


\textbf{proof}. 
A direct calculation gives
\[
F'(t)
=
\frac{1-\tau r t^{r+1}e^{t^r}}
{\left(1+\tau t e^{t^r}\right)^2}.
\]
Since the function
\[
t\mapsto \tau r t^{r+1}e^{t^r}
\]
is strictly increasing from $0$ to $+\infty$ on $(0,\infty)$, the function $F$ has a unique maximum point $t_\tau>0$, which satisfies
\[
\tau r t_\tau^{r+1}e^{t_\tau^r}=1.
\]
Taking logarithms yields
\[
t_\tau^r+(r+1)\ln t_\tau
=
|\ln\tau|-\ln r .
\]
If $t_\tau\ge1$, then $\ln t_\tau\ge0$, and hence
\[
t_\tau^r
\le
|\ln\tau|-\ln r
\le
|\ln\tau|+|\ln r|
\le
C(1+|\ln\tau|).
\]
Therefore,
\[
t_\tau\le C(1+|\ln\tau|)^{1/r}.
\]
If $0<t_\tau<1$, the same estimate is trivial. Consequently,
\[
\sup_{t>0}\frac{t}{1+\tau t e^{t^r}}
=
F(t_\tau)
\le t_\tau
\le C(1+|\ln\tau|)^{1/r}.
\]
\hfill $\Box$

\begin{theorem}\label{the08_new}
Let $r>0$, and assume that the exact source term satisfies
\[
g\in D\!\left(\exp\!\left(\frac{(-L)^r}{2}\right)\right),
\qquad
\|g\|_{r,\mathrm{exp}}\le E .
\]
Let $\tilde g_{\tau,r}^{\delta}$ be the regularized solution obtained by the
exponential quasi-boundary value regularization method, where the regularization
parameter $\tau$ is chosen by the Morozov discrepancy principle \ref{equ509}. 
Then, for any $p>0$, the following estimate holds:
\[
\|\tilde g_{\tau,r}^{\delta}-g\|
\le
C\,\delta\bigl(1+|\ln\delta|\bigr)^{1/r}
+
C\,E^{\frac{2}{p+2}}(1+\rho)^{\frac{p}{p+2}}
\delta^{\frac{p}{p+2}} .
\]
In particular, since
\[
\delta\bigl(1+|\ln\delta|\bigr)^{1/r}
=
o\!\left(\delta^{\frac{p}{p+2}}\right),
\qquad \delta\to0^+,
\]
we further obtain
\[
\|\tilde g_{\tau,r}^{\delta}-g\|
\le
C(1+E)(1+\rho)^{\frac{p}{p+2}}
\delta^{\frac{p}{p+2}},
\qquad p>0.
\]
Here $C>0$ is a generic constant independent of $\delta$ and $\tau$.
\end{theorem}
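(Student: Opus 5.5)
The plan is to split the error as $\|\tilde g_{\tau,r}^{\delta}-g\|\le\|\tilde g_{\tau,r}^{\delta}-\tilde g_{\tau,r}\|+\|\tilde g_{\tau,r}-g\|$. Alternatively, and more in the spirit of the stated bound, I will compare $\tilde g_{\tau,r}^{\delta}$ with $g$ through $K$ and then apply the conditional stability estimate \eqref{equ411}. Throughout I write $s_k=\frac{\eta_2}{\sqrt{\lambda_k}}l^{\alpha+\beta}E_{\alpha,1+\frac{\beta}{\alpha},1+\frac{2\beta}{\alpha}}(-\sqrt{\lambda_k}l^{\alpha+\beta})$, so that by \eqref{equ208} we have $c\lambda_k^{-1}\le s_k\le C\lambda_k^{-1}$. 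The two coefficient formulas are $\tilde g^\delta_k=\hat h^\delta_k/(s_k+\tau e^{\lambda_k^r})$ and $g_k=\hat h_k/s_k$.

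\textbf{Step 1 (discrepancy term).} Using the quasi-boundary relation, the residual is $K\tilde g_{\tau,r}^{\delta}-\hat h^\delta=-\tau\exp((-L)^r)\tilde g_{\tau,r}^{\delta}$. Its coefficients are $-\tau e^{\lambda_k^r}\hat h_k^\delta/(s_k+\tau e^{\lambda_k^r})$. The Morozov principle \eqref{equ509} fixes this norm at $\rho\delta$. Hence
\[
\|K(\tilde g_{\tau,r}^{\delta}-g)\|\le\|K\tilde g_{\tau,r}^{\delta}-\hat h^\delta\|+\|\hat h^\delta-\hat h\|\le(1+\rho)\delta .
\]

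\textbf{Step 2 (a priori bound for the difference).} I need a bound on $\|\tilde g_{\tau,r}^{\delta}-g\|_{D((-L)^{p/2})}$ in terms of $E$. I will split into a noise part and an exact part.

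For the exact part, the multiplier satisfies $|\tilde g_{\tau,r,k}-g_k|=\frac{\tau e^{\lambda_k^r}}{s_k+\tau e^{\lambda_k^r}}|g_k|\le|g_k|$. Since $\lambda_k^{p}\le C_pe^{\lambda_k^r}$, this gives $\|\tilde g_{\tau,r}-g\|_{D((-L)^{p/2})}\le CE$.

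For the noise part, the coefficients are $\lambda_k^{p/2}|\hat h_k^\delta-\hat h_k|/(s_k+\tau e^{\lambda_k^r})$. Using $s_k\ge c\lambda_k^{-1}$ and absorbing $\lambda_k^{p/2}$ into the exponential (at the cost of slightly changing the exponent), I estimate $\sup_k$ of the multiplier by Lemma \ref{lem_log_estimate}, giving $C\delta(1+|\ln\tau|)^{1/r}$. A cleaner route is to keep the noise part outside the interpolation entirely: I write $\tilde g^\delta-g=(\tilde g^\delta-\tilde g)+(\tilde g-g)$. The first term is bounded directly in $L^2$ by $\delta\sup_k(s_k+\tau e^{\lambda_k^r})^{-1}\le C\delta\sup_t t/(1+\tau te^{t^r})\le C\delta(1+|\ln\tau|)^{1/r}$ via Lemma \ref{lem_log_estimate}. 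This produces the first term of the claimed bound. The second term, $\tilde g-g$, has $D((-L)^{p/2})$-norm at most $CE$ by the above. Its image satisfies $\|K(\tilde g-g)\|\le\|K(\tilde g^\delta-g)\|+\|K(\tilde g^\delta-\tilde g)\|\le(1+\rho)\delta+\delta$, since the coefficients of $K(\tilde g^\delta-\tilde g)$ are $s_k/(s_k+\tau e^{\lambda_k^r})$ times the noise, which is bounded by $\delta$. Applying \eqref{equ411}, in the form used in the proof of Theorem \ref{the09} (with $\|K\cdot\|$ in place of the data), I get $\|\tilde g-g\|\le C E^{2/(p+2)}(1+\rho)^{p/(p+2)}\delta^{p/(p+2)}$.

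\textbf{Step 3 (controlling $|\ln\tau|$ by $|\ln\delta|$).} This is the main obstacle. I need a lower bound on the Morozov-chosen $\tau$, at least $\tau\ge c\delta^{\kappa}$ for some fixed $\kappa$, so that $|\ln\tau|\le C(1+|\ln\delta|)$. I will try the standard argument. From \eqref{equ509},
\[
\rho\delta=\|\tau\exp((-L)^r)\tilde g^\delta\|\le\|\tau e^{\lambda_k^r}(\hat h_k^\delta-\hat h_k)/(s_k+\tau e^{\lambda_k^r})\|+\|\tau e^{\lambda_k^r}s_kg_k/(s_k+\tau e^{\lambda_k^r})\|.
\]
The first piece is at most $\delta$, so $(\rho-1)\delta\le\tau\|e^{\lambda_k^r}s_k g_k/(s_k+\tau e^{\lambda_k^r})\|$. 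Bounding $e^{\lambda_k^r}s_k/(s_k+\tau e^{\lambda_k^r})\le\sqrt{e^{\lambda_k^r}}\,\sqrt{s_k/\tau}$ by AM–GM gives $(\rho-1)\delta\le C\tau^{1/2}E$. Hence $\tau\ge c\delta^2/E^2$ and $|\ln\tau|\le C(1+|\ln\delta|)$, with $E$-dependence absorbed into $C$. If $\tau\ge1$ the lemma is not needed, since the supremum is then bounded by a constant. Inserting this into Step 2 yields the first claimed estimate.

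Finally, $\delta(1+|\ln\delta|)^{1/r}/\delta^{p/(p+2)}=\delta^{2/(p+2)}(1+|\ln\delta|)^{1/r}\to0$ as $\delta\to0^+$, and $E^{2/(p+2)}\le1+E$. Together these give the simplified bound $C(1+E)(1+\rho)^{p/(p+2)}\delta^{p/(p+2)}$.
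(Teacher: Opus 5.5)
Your proof is correct and is essentially the paper's argument. It uses the same split $\tilde g_{\tau,r}^{\delta}-g=(\tilde g_{\tau,r}^{\delta}-\tilde g_{\tau,r})+(\tilde g_{\tau,r}-g)$, with Lemma \ref{lem_log_estimate} for the noise term, the same AM--GM bound on the discrepancy giving $\tau\ge c(\rho-1)^2\delta^2/E^2$, and the interpolation form of \eqref{equ411} applied to $\tilde g_{\tau,r}-g$ with $\|\tilde g_{\tau,r}-g\|_{D((-L)^{p/2})}\le CE$.

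The differences are cosmetic. You bound $\|K(\tilde g_{\tau,r}-g)\|$ by $(2+\rho)\delta$ through $K(\tilde g_{\tau,r}^{\delta}-g)$, whereas the paper gets $(1+\rho)\delta$ directly. You also explicitly treat the case $\tau\ge1$, which the paper's lemma (stated only for $0<\tau<1$) leaves unaddressed.
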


\textbf{Proof}. From the proof of Theorem \ref{the07_new}, we know 
\begin{align*}
\left\|\tilde{g}_{\tau,r}^\delta(y)\!-\!\tilde{g}_{\tau,r}(y)\right\|&\leq\delta \sup_k\frac{1}{\frac{\eta_{2}}{\sqrt{\lambda_k}}l^{\alpha+\beta} E_{\alpha,1+\frac{\beta}{\alpha},1+\frac{2\beta}{\alpha}}\left(-\sqrt{\lambda_k}l^{\alpha+\beta} \right)+\tau\exp(\lambda_k^r)}.
\end{align*}
Using estimate \eqref{equ208} and absorbing all constants into a generic constant $C>0$, we obtain
\begin{align*}
\sup_{k\ge1}\frac{1}
{\frac{\eta_{2}}{\sqrt{\lambda_k}}l^{\alpha+\beta}
E_{\alpha,1+\frac{\beta}{\alpha},1+\frac{2\beta}{\alpha}}
\!\left(-\sqrt{\lambda_k}l^{\alpha+\beta}\right)
+\tau\exp(\lambda_k^r)}
&\le C
\sup_{k\ge1}
\frac{\lambda_k}{1+\tau\lambda_k\exp(\lambda_k^r)}\\
&\le
C\bigl(1+|\ln\tau|\bigr)^{1/r}.
\end{align*}	
Hence,
\[
\|\tilde g_{\tau,r}^{\delta}-\tilde g_{\tau,r}\|
\le
C\delta\bigl(1+|\ln\tau|\bigr)^{1/r}.
\]

Next, we show that $\tau$ is not too small. From the discrepancy principle, $\|\hat h^\delta-\hat h\|\le\delta$ and 
\[
0<\frac{\tau \exp(\lambda_k^r)}{\frac{\eta_{2}}{\sqrt{\lambda_k}}l^{\alpha+\beta}
E_{\alpha,1+\frac{\beta}{\alpha},1+\frac{2\beta}{\alpha}}
\!\left(-\sqrt{\lambda_k}l^{\alpha+\beta}\right)+\tau \exp(\lambda_k^r)}<1,
\] we obtain
\begin{align*}
\rho\delta
&=
\left\|
\sum_{k=1}^{\infty}
\frac{\tau \exp(\lambda_k^r)}{\frac{\eta_{2}}{\sqrt{\lambda_k}}l^{\alpha+\beta}
E_{\alpha,1+\frac{\beta}{\alpha},1+\frac{2\beta}{\alpha}}
\!\left(-\sqrt{\lambda_k}l^{\alpha+\beta}\right)+\tau \exp(\lambda_k^r)}
\hat h_k^\delta \varphi_k
\right\|\\
&\leq \delta + \left\|
\sum_{k=1}^{\infty}
\frac{\tau \exp(\lambda_k^r)}{\frac{\eta_{2}}{\sqrt{\lambda_k}}l^{\alpha+\beta}
E_{\alpha,1+\frac{\beta}{\alpha},1+\frac{2\beta}{\alpha}}
\!\left(-\sqrt{\lambda_k}l^{\alpha+\beta}\right)+\tau \exp(\lambda_k^r)}
\hat h_k \varphi_k
\right\|.
\end{align*}
Using the exact relation $\hat h_k=\frac{\eta_{2}}{\sqrt{\lambda_k}}l^{\alpha+\beta}
E_{\alpha,1+\frac{\beta}{\alpha},1+\frac{2\beta}{\alpha}}
\!\left(-\sqrt{\lambda_k}l^{\alpha+\beta}\right) g_k$, together with the inequality
\[
\left(\frac{ab}{a+b}\right)^2\le ab
\]
for $a,b>0$, we have
\begin{align*}
(\rho-1)^2\delta^2
&\le
\sum_{k=1}^{\infty}
\left(
\frac{\tau \exp(\lambda_k^r) \frac{\eta_{2}}{\sqrt{\lambda_k}}l^{\alpha+\beta}
E_{\alpha,1+\frac{\beta}{\alpha},1+\frac{2\beta}{\alpha}}
\!\left(-\sqrt{\lambda_k}l^{\alpha+\beta}\right)}
{\frac{\eta_{2}}{\sqrt{\lambda_k}}l^{\alpha+\beta}
E_{\alpha,1+\frac{\beta}{\alpha},1+\frac{2\beta}{\alpha}}
\!\left(-\sqrt{\lambda_k}l^{\alpha+\beta}\right)+\tau \exp(\lambda_k^r)}
\right)^2 |g_k|^2\\ 
&\le C\tau
\sum_{k=1}^{\infty}\exp(\lambda_k^r)|g_k|^2
\le
C\tau E^2 .
\end{align*}
Hence,
\[
\tau\ge C\frac{(\rho-1)^2}{E^2}\delta^2 .
\]
Consequently, for sufficiently large $C$, 
\[
1+|\ln\tau|
\le
C \bigl(1+|\ln\delta|\bigr),
\]
and therefore
\[
\|\tilde g_{\tau,r}^{\delta}-\tilde g_{\tau,r}\|
\le
C\delta\bigl(1+|\ln\delta|\bigr)^{1/r}.
\]

It remains to estimate the regularization bias. We have
\[
\tilde g_{\tau,r}-g
=
-\sum_{k=1}^{\infty}
\frac{\tau \exp(\lambda_k^r)}{\frac{\eta_{2}}{\sqrt{\lambda_k}}l^{\alpha+\beta}
E_{\alpha,1+\frac{\beta}{\alpha},1+\frac{2\beta}{\alpha}}
\!\left(-\sqrt{\lambda_k}l^{\alpha+\beta}\right)+\tau \exp(\lambda_k^r)}g_k\varphi_k .
\]
Since the multiplier is bounded by $1$,
\[
\|\tilde g_{\tau,r}-g\|_{D((-L)^{p/2})}
\le
\|g\|_{D((-L)^{p/2})}.
\]
Moreover, the exponential source condition implies
\[
\|g\|_{D((-L)^{p/2})}
\le
C\|g\|_{r,\mathrm{exp}}
\le
CE .
\]
On the other hand, 
\[
K(\tilde g_{\tau,r}-g)
=
-\sum_{k=1}^{\infty}
\frac{\tau \exp(\lambda_k^r)}{\frac{\eta_{2}}{\sqrt{\lambda_k}}l^{\alpha+\beta}
E_{\alpha,1+\frac{\beta}{\alpha},1+\frac{2\beta}{\alpha}}
\!\left(-\sqrt{\lambda_k}l^{\alpha+\beta}\right) + \tau \exp(\lambda_k^r)}
\hat h_k\varphi_k . 
\]
Using the discrepancy principle and the noise bound, we obtain
\[
\begin{aligned}
\|K(\tilde g_{\tau,r}-g)\|
&\le
\left\|
\sum_{k=1}^{\infty}
\frac{\tau \exp(\lambda_k^r)}{\frac{\eta_{2}}{\sqrt{\lambda_k}}l^{\alpha+\beta}
E_{\alpha,1+\frac{\beta}{\alpha},1+\frac{2\beta}{\alpha}}
\!\left(-\sqrt{\lambda_k}l^{\alpha+\beta}\right)+\tau \exp(\lambda_k^r)}
\hat h_k^\delta\varphi_k
\right\|
+ \\
&\quad 
\left\|
\sum_{k=1}^{\infty}
\frac{\tau \exp(\lambda_k^r)}{\frac{\eta_{2}}{\sqrt{\lambda_k}}l^{\alpha+\beta}
E_{\alpha,1+\frac{\beta}{\alpha},1+\frac{2\beta}{\alpha}}
\!\left(-\sqrt{\lambda_k}l^{\alpha+\beta}\right)+\tau \exp(\lambda_k^r)}
(\hat h_k-\hat h_k^\delta)\varphi_k
\right\|  \\
&\le
\rho\delta+\delta
=
(1+\rho)\delta .
\end{aligned}
\]
Applying the conditional stability estimate to $\tilde g_{\tau,r}-g$, 
we obtain
\[
\|\tilde g_{\tau,r}-g\|
\le
C
\|\tilde g_{\tau,r}-g\|_{D((-L)^{p/2})}^{\frac{2}{p+2}}
\|K(\tilde g_{\tau,r}-g)\|^{\frac{p}{p+2}}.
\]
Therefore,
\[
\|\tilde g_{\tau,r}-g\|
\le
C
E^{\frac{2}{p+2}}
(1+\rho)^{\frac{p}{p+2}}
\delta^{\frac{p}{p+2}}.
\]

Finally, by the triangle inequality,
\[
\|\tilde g_{\tau,r}^{\delta}-g\|
\le
\|\tilde g_{\tau,r}^{\delta}-\tilde g_{\tau,r}\|
+
\|\tilde g_{\tau,r}-g\|,
\]
which yields
\[
\|\tilde g_{\tau,r}^{\delta}-g\|
\le
C\delta\bigl(1+|\ln\delta|\bigr)^{1/r}
+
C
E^{\frac{2}{p+2}}
(1+\rho)^{\frac{p}{p+2}}
\delta^{\frac{p}{p+2}}.
\]
Since
\[
\delta\bigl(1+|\ln\delta|\bigr)^{1/r}
=
o\!\left(\delta^{\frac{p}{p+2}}\right)
\quad \text{as } \delta\to0^+,
\]
the simplified estimate follows. \hfill $\Box$

\begin{remark}
For the exponential quasi-boundary value regularization method, the case $r\le0$ is essentially different from the case $r>0$. Indeed, when $r\le0$, the factor $\exp(\lambda_k^r)$ remains bounded and therefore does not provide sufficient high-frequency damping. Consequently, under only the noise assumption
\[
\|\hat h^\delta-\hat h\|\le\delta,
\]
the Morozov discrepancy principle does not yield an effective a posteriori convergence estimate for the regularized solution. Additional regularity assumptions on the noisy data or a uniform boundedness condition on the regularized solutions in a stronger Hilbert scale would be required to recover such a convergence result.
\end{remark}

By applying the same spectral truncation procedure to the exponential quasi-boundary value regularization method, the corresponding finite-dimensional regularized solution is given by
\begin{equation}\label{equ5022_new}
\tilde{g}_{N,\tau,r}^{\delta}(y)
=
\sum_{k=1}^{N}
\frac{
1
}{
\frac{\eta_{2}}{\sqrt{\lambda_k}}\,l^{\alpha+\beta}
E_{\alpha,\,1+\frac{\beta}{\alpha},\,1+\frac{2\beta}{\alpha}}
\!\left(-\sqrt{\lambda_k}\,l^{\alpha+\beta}\right) + \tau\,\exp(\lambda_k^r)
}
\,\hat{h}_{N,k}^{\delta}\,\varphi_k(y),
\end{equation}
where
\[
\hat{h}_{N,k}^{\delta}
=
f_k
E_{\alpha,\,1+\frac{\beta}{\alpha},\,\frac{\beta}{\alpha}}
\!\left(-\sqrt{\lambda_k}\,l^{\alpha+\beta}\right)
-h_k^\delta,
\qquad
h_k^\delta=(h^\delta,\varphi_k).
\]

\begin{theorem}\label{the10}
Let $g\in L^2(\Omega)$ and $f\in L^2(\Omega)$, and denote $g_k=(g,\varphi_k)$ and $f_k=(f,\varphi_k)$. 
Then, for any $\zeta_1,\zeta_2>0$, there exist truncated approximations
\[
g_N(y)=\sum_{k=1}^{N}g_k\,\varphi_k(y), 
\quad 
f_N(y)=\sum_{k=1}^{N}f_k\,\varphi_k(y),
\]
such that 
\[
\|g-g_N\|\le\zeta_1,
\quad
\|f-f_N\|\le\zeta_2.
\]
Let $\tilde{g}_{N,\tau,r}^{\delta}$ be the finite-dimensional regularized solution defined by \eqref{equ5022_new}, where the regularization parameter $\tau$ is chosen by the Morozov discrepancy principle. Then, for any fixed $p>0$, the following estimate holds:
\begin{equation}\label{equ524_new}
\|\tilde{g}_{N,\tau,r}^{\delta}-g\|
\le
C_N
\left[
(1+\rho)^{\frac{p}{p+2}}\delta^{\frac{p}{p+2}}
+
\zeta_2^{\frac{p}{p+2}}
+
\zeta_1^{\frac{p}{p+2}}
\right]
+\zeta_1 .
\end{equation}
Here $C_N>0$ is a constant depending on $N$, $p$, $r$, and the exact data, but independent of $\delta$, $\tau$, $\rho$, $\zeta_1$, and $\zeta_2$.
\end{theorem}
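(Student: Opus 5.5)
The proof will follow the argument used for Theorem \ref{the09}, with the Tikhonov filter replaced by the quasi-boundary filter of \eqref{equ5022_new}. The error splits as
\[
\|\tilde g_{N,\tau,r}^{\delta}-g\|\le \|\tilde g_{N,\tau,r}^{\delta}-g_N\|+\|g_N-g\|,
\]
and the second term is at most $\zeta_1$ by the choice of $N$. All the work is in bounding the first term. Both $\tilde g_{N,\tau,r}^{\delta}$ and $g_N$ lie in $X_N$, so I would control this term by combining the conditional stability estimate \eqref{equ411} with a bound on $\|K(\tilde g_{N,\tau,r}^{\delta}-g_N)\|$ that comes from the discrepancy principle.

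\textbf{Step 1: a priori bound in the Hilbert scale.} On $X_N$ the norms $\|\cdot\|_{D((-L)^{p/2})}$ and $\|\cdot\|$ satisfy $\|v\|_{D((-L)^{p/2})}\le \lambda_N^{p/2}\|v\|$. The multiplier in \eqref{equ5022_new} is bounded by $1/\sigma_k$, and $\sigma_k$ is bounded below on $k\le N$ by \eqref{equ208}. Hence
\[
\|\tilde g_{N,\tau,r}^{\delta}\|\le \Bigl(\max_{k\le N}\sigma_k^{-1}\Bigr)\|\hat h_N^\delta\|,
\]
and $\|\hat h_N^\delta\|\le \|f\|+\|h\|+\delta$ by $|E_{\alpha,1+\beta/\alpha,\beta/\alpha}|\le1$. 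Together with $\|g_N\|\le\|g\|$, this gives
\[
\|\tilde g_{N,\tau,r}^{\delta}-g_N\|_{D((-L)^{p/2})}\le M_N,
\]
with $M_N$ depending on $N$, $p$ and the exact data (and on $\delta\le\delta_0$). Note that this step needs no source condition, which is why $g\in L^2(\Omega)$ suffices.

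\textbf{Step 2: residual bound.} As in Theorem \ref{the09},
\[
\hat h_N^\delta-Kg_N=(F_N-F)+(h-h^\delta)+K(g-g_N).
\]
This yields $\|\hat h_N^\delta-Kg_N\|\le C(\zeta_2+\delta+\zeta_1)$, using $\|K\|\le C$ and the boundedness of the Mittag-Leffler factor. The discrepancy principle \eqref{equ509} gives $\|K\tilde g_{N,\tau,r}^{\delta}-\hat h_N^\delta\|=\rho\delta$. For the quasi-boundary filter this residual equals
\[
\Bigl\|\sum_{k\le N}\frac{\tau e^{\lambda_k^r}}{\sigma_k+\tau e^{\lambda_k^r}}\hat h^\delta_{N,k}\varphi_k\Bigr\|
\]
(plus the components of $h^\delta$ beyond $N$, if any). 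The triangle inequality then gives
\[
\|K(\tilde g_{N,\tau,r}^{\delta}-g_N)\|\le C\bigl((1+\rho)\delta+\zeta_1+\zeta_2\bigr).
\]

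\textbf{Step 3: interpolation.} I would apply \eqref{equ411} to $\tilde g_{N,\tau,r}^{\delta}-g_N$, in its operator form $\|v\|\le C\|v\|_{D((-L)^{p/2})}^{2/(p+2)}\|Kv\|^{p/(p+2)}$, exactly as in the proofs of Theorems \ref{the09} and \ref{the08_new}. Then I would use the subadditivity $(a+b+c)^{s}\le a^s+b^s+c^s$ for $s=p/(p+2)\in(0,1)$. This produces \eqref{equ524_new} with $C_N=CM_N^{2/(p+2)}$.

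\textbf{Main obstacle.} The delicate point is Step 1 together with the well-definedness of $\tau$. I have to make sure $M_N$ does not depend on $\tau$, which is why I bound the filter by $1/\sigma_k$ rather than by $1/(\tau e^{\lambda_k^r})$. I also have to verify that the discrepancy equation has a solution in the finite-dimensional setting. For that I would argue that the residual is continuous and increasing in $\tau$, tends to the out-of-span part of the data as $\tau\to0$, and tends to $\|\hat h_N^\delta\|$ as $\tau\to\infty$, under the standing assumption $\rho\delta<\|\hat h_N^\delta\|$.
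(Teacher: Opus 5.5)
Your proposal is correct and is essentially the paper's proof: the paper simply transfers the argument of Theorem \ref{the09} to the quasi-boundary filter in \eqref{equ5022_new}, using the finite-dimensional bound $M_N$, the residual decomposition $\hat h_N^\delta-Kg_N=(F_N-F)+(h-h^\delta)+K(g-g_N)$, the discrepancy principle, the interpolation form of \eqref{equ411}, subadditivity, and the triangle inequality. Bounding the filter by $1/\sigma_k$ makes the $\tau$-independence of $M_N$ explicit, which the paper only asserts via norm equivalence; solvability of the discrepancy equation is a hypothesis of the theorem, but if you want to prove it you also need the out-of-span part $\bigl\|\sum_{k>N}h_k^\delta\varphi_k\bigr\|$ to be at most $\rho\delta$, not just $\rho\delta<\|\hat h_N^\delta\|$.
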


\textbf{Proof}. 
The proof is analogous to that of Theorem \ref{the09}. Indeed, after replacing the exponential-type Tikhonov filter in \eqref{equ512} with the exponential quasi-boundary value filter in \eqref{equ5022_new}, the same finite-dimensional norm equivalence argument, the Morozov discrepancy principle, the conditional stability estimate \eqref{equ411}, and the triangle inequality lead directly to \eqref{equ524_new}. 
For brevity, the details are omitted. 
\hfill $\Box$

\begin{remark}
Theorems \ref{the09} and \ref{the10} show that both the exponential-type 
Tikhonov regularization method and the exponential quasi-boundary value 
regularization method remain applicable to general source terms 
$g\in L^2(\Omega)$ through finite-dimensional spectral approximation. 
In these finite-dimensional settings, no exponential-type source condition such as
\[
g\in D\!\left(\exp\!\left(\frac{(-L)^r}{2}\right)\right)
\]
is required for the exact source term. Moreover,  the corresponding finite-dimensional error estimates 
hold for all $r\in\mathbb{R}$, although the constants may depend on the truncation 
dimension $N$, the parameter $r$, and the Hilbert-scale index $p$. 
This indicates that the proposed regularization strategies can be stably implemented 
for low-regularity source terms in $L^2(\Omega)$ after a suitable spectral truncation.
\end{remark}

%==============================================
\section{Numerical Experiments} \label{sec:Numerical Experiments}
%==============================================	
This section presents several numerical examples to verify the effectiveness 
of the proposed exponential-type regularization methods in recovering 
the source term of a fractional-order elliptic equation 
of Tricomi-Gellerstedt-Keldysh type.

For simplicity, we consider the one-dimensional domain 
$\Omega=(0,\pi)$ and take
\[
L=\frac{\partial^2}{\partial y^2}:
H^2(0,\pi)\cap H_0^1(0,\pi)\to L^2(0,\pi).
\]
The corresponding eigenpairs of $-L$ are
\[
\lambda_k=k^2,
\qquad
\varphi_k(y)=\sqrt{\frac{2}{\pi}}\sin(ky),
\qquad k=1,2,\ldots .
\]

For the two regularization methods proposed in this paper, 
the corresponding numerical approximations of the regularized solutions 
are obtained by truncating the spectral series in 
\eqref{equ512} and \eqref{equ5022_new}, respectively. 
In the computations, the measurement location is fixed at $l=1$. 
For both the exponential-type Tikhonov regularized solution \eqref{equ512} 
and the exponential quasi-boundary value regularized solution \eqref{equ5022_new}, 
two choices of the exponential parameter, $r=-1$ and $r=0.2$, are tested. 
For all regularization methods considered below, including the generalized 
Tikhonov regularized solution \eqref{equ603}, the number of truncated terms is taken as 
$N=50$. In the generalized Tikhonov regularization method, we use $r=1$.

Since in most practical applications it is difficult, or even impossible, 
to estimate the norm of the exact source term, all numerical results 
in this section are obtained using the \emph{a posteriori} parameter 
selection strategy based on the Morozov discrepancy principle. 
The discrepancy parameter is fixed as $\rho=1.01$ throughout the computations.

The noisy data $h^{\delta}(y)$ are generated by adding random perturbations to the exact data $h(y)$ in the following relative form:
\begin{equation}\label{equ601}
h^\delta(y) = \bigl(1 + \varepsilon R\bigr)\,h(y),
\end{equation}
where $R$ is a random variable following the standard normal distribution, and $\varepsilon$ denotes the relative noise level in the discrete data. Consequently, the noise magnitude is given by $\delta = \|\varepsilon R h\|$.

To evaluate the effectiveness of the two exponential-type regularization methods proposed in this paper, we compare them with the classical Tikhonov regularization method and the generalized Tikhonov regularization method \cite{Ma2018,Can2020}. The generalized Tikhonov regularized solution is given by
\begin{equation}\label{equ603}
\hat{g}_{\tau,r}^\delta(y)
=
\sum_{k=1}^{N}
\frac{
\frac{\eta_{2}}{\sqrt{\lambda_k}}\,l^{\alpha+\beta}
E_{\alpha,\,1+\frac{\beta}{\alpha},\,1+\frac{2\beta}{\alpha}}
\!\left(-\sqrt{\lambda_k}\,l^{\alpha+\beta}\right)
}{
\left(
\frac{\eta_{2}}{\sqrt{\lambda_k}}\,l^{\alpha+\beta}
E_{\alpha,\,1+\frac{\beta}{\alpha},\,1+\frac{2\beta}{\alpha}}
\!\left(-\sqrt{\lambda_k}\,l^{\alpha+\beta}\right)
\right)^{2}
+\tau\,\lambda_k^r
}
\,\hat{h}_k^\delta\,\varphi_k(y).
\end{equation}
When $r=0$, since $\lambda_k^0=1$, \eqref{equ603} reduces to the classical Tikhonov regularization method. When $r>0$, \eqref{equ603} corresponds to the generalized Tikhonov regularization method; in the numerical comparisons below, we take $r=1$ for this method.

\textbf{Example 1.}
Let the exact boundary value be
\[
f(y)=\frac{\sqrt{3}}{3\pi}\bigl(y^3-1\bigr)\sin(3y),
\]
and let the exact source term be
\[
g(y)=y(y-1)(y-\pi)\cos y,\qquad 0\le y\le \pi .
\]

Figure \ref{fig01}--\ref{fig011} displays the exact source term and the reconstructed results obtained by the two proposed exponential-type regularization methods. The relative errors of the exponential-type Tikhonov regularization method, the exponential quasi-boundary value regularization method, the classical Tikhonov regularization method, and the generalized Tikhonov regularization method are compared in Table \ref{tab01}. Since the numerical results for different pairs of $(\alpha,\beta)$ show similar behavior, Table \ref{tab01} reports only the relative errors for $(\alpha,\beta)=(0.9,0.6)$.

\begin{figure}[H] 
\centering
\subfloat[ $\alpha=0.7,\beta=0.3,r=-1$]{\includegraphics[width=0.35\textwidth]{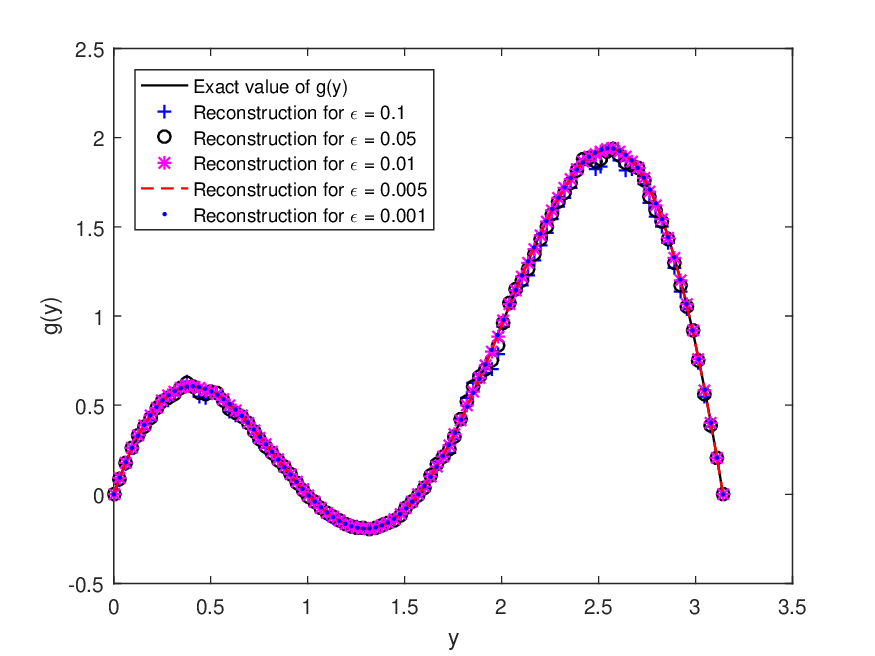}\label{fig01:sub1}}
\subfloat[ $\alpha=0.9,\beta=0.6,r=-1$]{\includegraphics[width=0.35\textwidth]{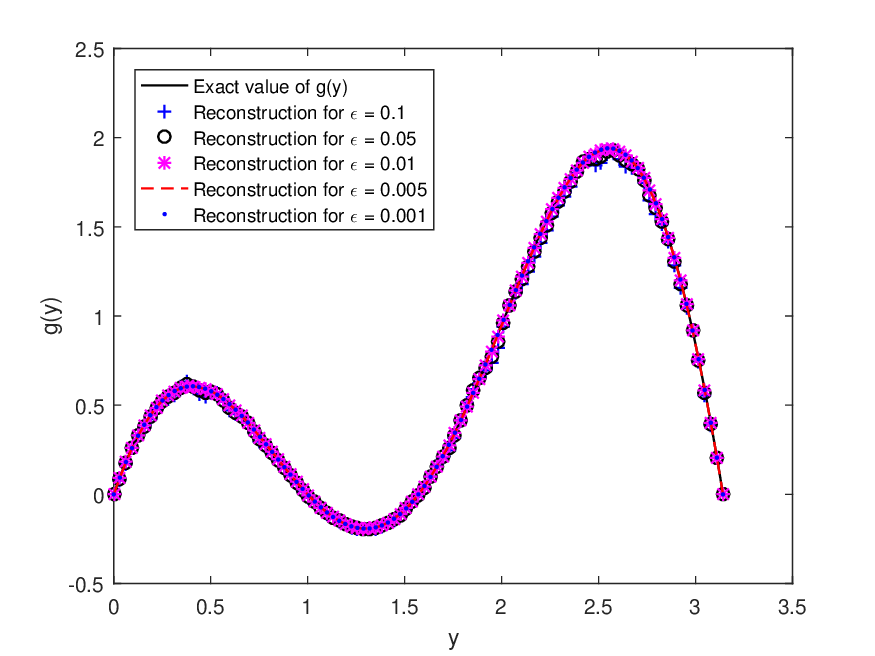}\label{fig01:sub2}}
\\
\subfloat[ $\alpha=0.7,\beta=0.3,r=0.2$]{\includegraphics[width=0.35\textwidth]{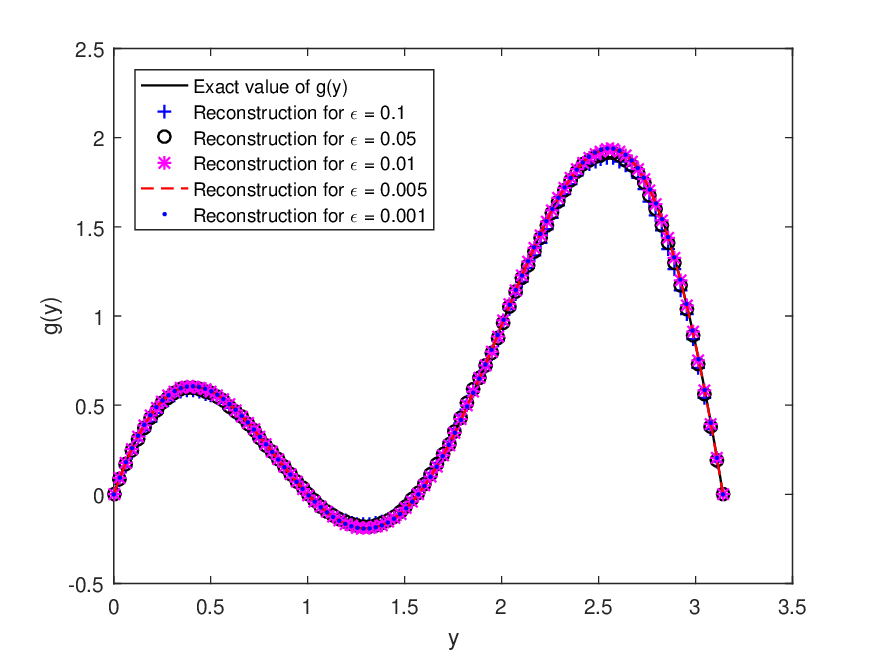}\label{fig01:sub3}}
\subfloat[ $\alpha=0.9,\beta=0.6,r=0.2$]{\includegraphics[width=0.35\textwidth]{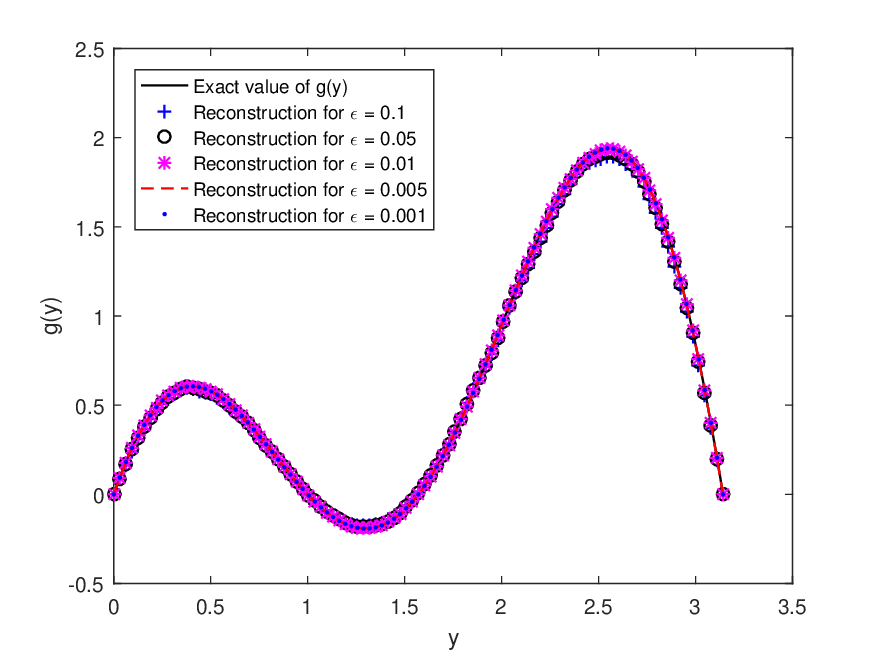}\label{fig01:sub4}}
\caption{Numerical inversions by the exponential-type Tikhonov regularization method for Example 1.}\label{fig01}
\end{figure}

\begin{figure}[H] 
	\centering
	\subfloat[ $\alpha=0.7,\beta=0.3,r=-1$]{\includegraphics[width=0.35\textwidth]{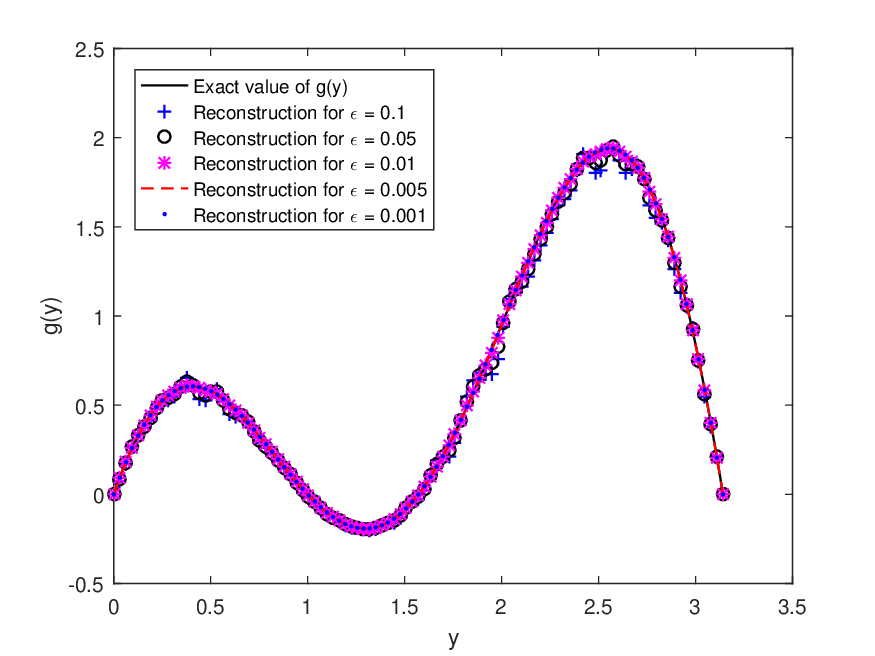}\label{fig011:sub1}}
	\subfloat[ $\alpha=0.9,\beta=0.6,r=-1$]{\includegraphics[width=0.35\textwidth]{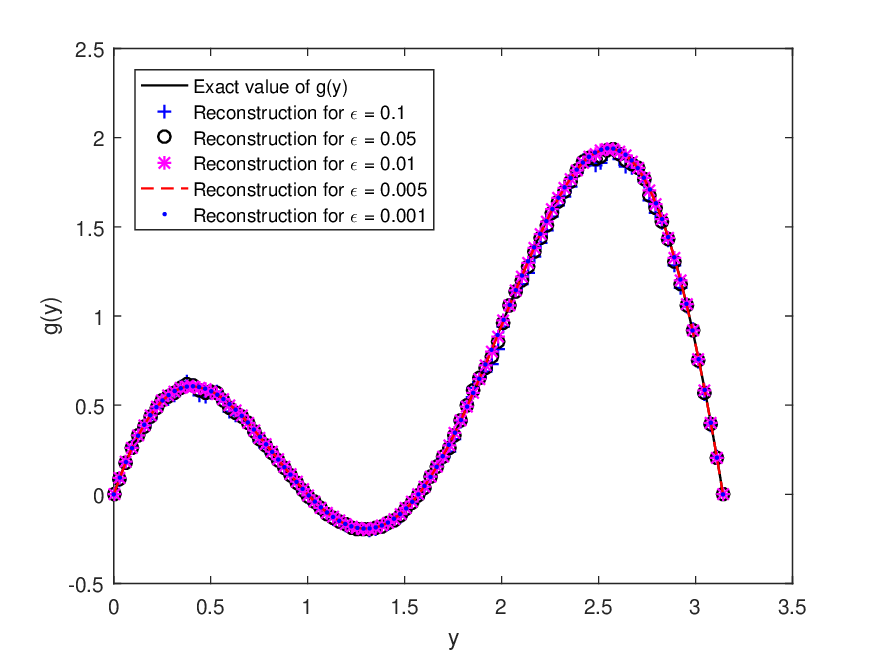}\label{fig011:sub2}}
	\\
	\subfloat[ $\alpha=0.7,\beta=0.3,r=0.2$]{\includegraphics[width=0.35\textwidth]{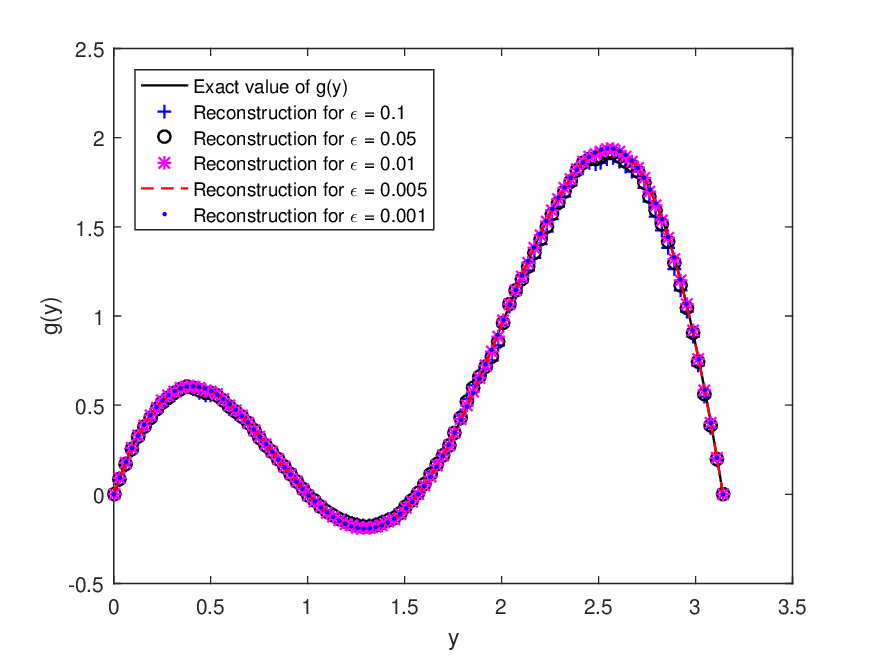}\label{fig011:sub3}}
	\subfloat[ $\alpha=0.9,\beta=0.6,r=0.2$]{\includegraphics[width=0.35\textwidth]{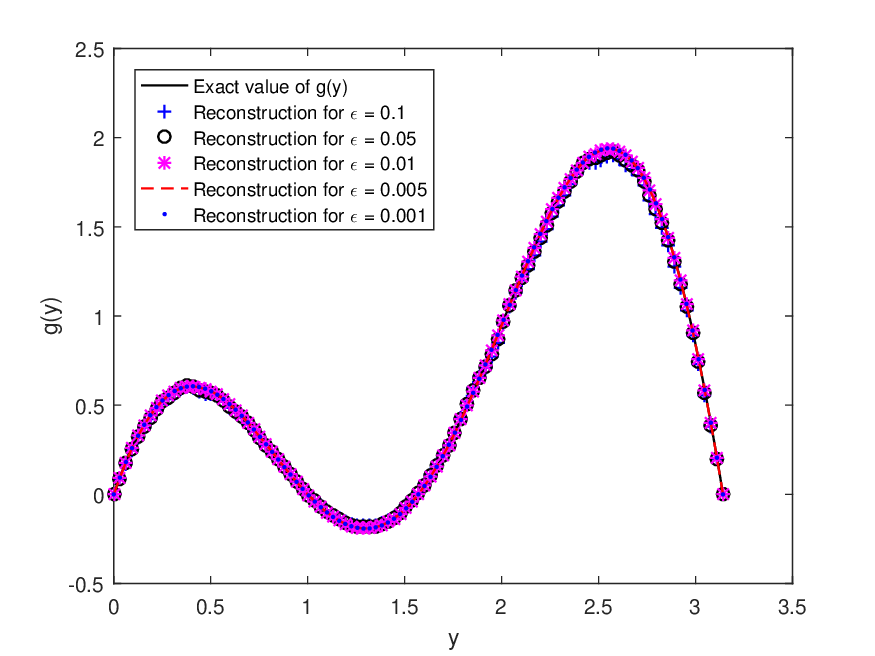}\label{fig011:sub4}}
	\caption{Numerical inversions by the exponential-type quasi-boundary regularization for Example 1.}\label{fig011}
\end{figure}

{\small
\begin{longtable}{ccccccccc}
\caption{Comparison of relative errors for different regularization methods in Example 1}
\label{tab01} \\
\hline
\multirow{2}{*}{$\alpha$} 
& \multirow{2}{*}{$\beta$} 
& \multirow{2}{*}{$\varepsilon$} 
& \multicolumn{2}{c}{Exponential Tikhonov} 
& \multicolumn{2}{c}{Exponential quasi-bound} 
& \multicolumn{2}{c}{General Tikhonov} \\
\cline{4-9}
& & 
& $r=-1$ 
& $r=0.2$ 
& $r=-1$ 
& $r=0.2$ 
& $r=0$ 
& $r=1$ \\
\hline
\endfirsthead

\hline
\multirow{2}{*}{$\alpha$} 
& \multirow{2}{*}{$\beta$} 
& \multirow{2}{*}{$\varepsilon$} 
& \multicolumn{2}{c}{Exponential Tikhonov} 
& \multicolumn{2}{c}{Exponential quasi-bound} 
& \multicolumn{2}{c}{General Tikhonov} \\
\cline{4-9}
& & 
& $r=-1$ 
& $r=0.2$ 
& $r=-1$ 
& $r=0.2$ 
& CTR 
& GTR: $r=1$ \\
\hline
\endhead

\hline
\endfoot

\hline
\endlastfoot

\multirow{5}{*}{0.9} 
& \multirow{5}{*}{0.6} 
& 0.1   
& 3.0928e-02 & 2.8189e-02 
& 3.1771e-02 & 2.9035e-02 
& 3.1548e-02 & 2.9119e-02 \\

& & 0.05   
& 1.5850e-02 & 1.4489e-02 
& 1.6021e-02 & 1.5143e-02 
& 1.6295e-02 & 1.5050e-02 \\

& & 0.01  
& 3.2452e-03 & 3.1685e-03 
& 3.2291e-03 & 3.2987e-03 
& 3.3705e-03 & 3.3445e-03 \\

& & 0.005  
& 1.6326e-03 & 1.6463e-03 
& 1.6211e-03 & 1.6871e-03 
& 1.6978e-03 & 1.7844e-03 \\

& & 0.001  
& 3.6035e-04 & 3.7585e-04 
& 3.5780e-04 & 3.7505e-04 
& 3.7241e-04 & 4.4294e-04 \\
\end{longtable}
}

\textbf{Example 2.}
Let the exact boundary value be
\[
f(y)=\frac{\sqrt{3}}{3\pi}\bigl(y^3-1\bigr)\sin(3y),
\]
and let the exact source term be the following piecewise smooth function:
\[
g(y)=
\begin{cases}
2y, & 0\le y\le \frac{\pi}{4},\\
1, & \frac{\pi}{4}<y\le \frac{\pi}{2},\\
\sin\!\left(5y+\frac{\pi}{2}\right)+1, & \frac{\pi}{2}<y\le \pi.
\end{cases}
\]

Numerical simulations are carried out for two sets of fractional parameters, 
$(\alpha,\beta)=(0.7,0.3)$ and $(\alpha,\beta)=(0.9,0.6)$. 
Figure \ref{fig02}--\ref{fig02} displays the exact source term and the reconstructed results obtained by the two proposed exponential-type regularization methods. 
The relative errors of the exponential-type Tikhonov regularization method, the exponential quasi-boundary value regularization method, the classical Tikhonov regularization method, and the generalized Tikhonov regularization method are compared in Table \ref{tab02}. 
Since the numerical results for different pairs of $(\alpha,\beta)$ show similar behavior, Table \ref{tab01} reports only the relative errors for $(\alpha,\beta)=(0.9,0.6)$.

\begin{figure}[H] 
\centering
\subfloat[ $\alpha=0.7,\beta=0.3,r=-1$]{\includegraphics[width=0.35\textwidth]{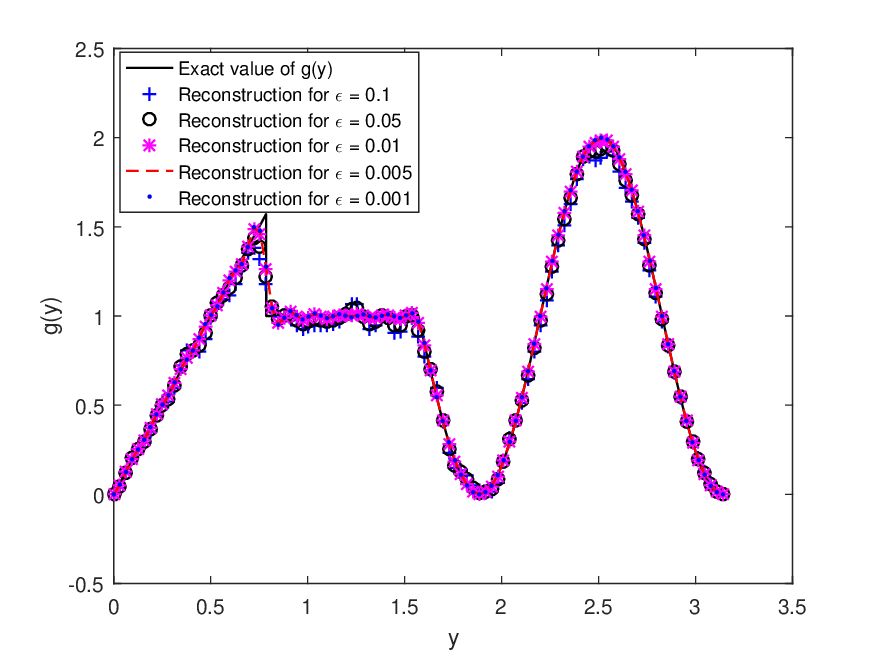}\label{fig02:sub1}}
\subfloat[ $\alpha=0.9,\beta=0.6,r=-1$]{\includegraphics[width=0.35\textwidth]{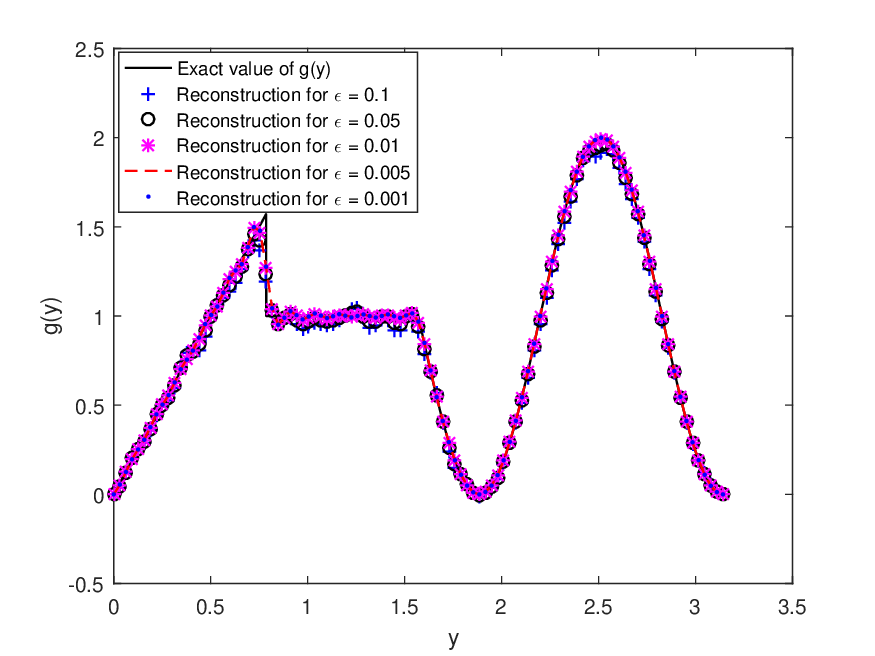}\label{fig02:sub2}}
\\
\subfloat[ $\alpha=0.7,\beta=0.3,r=0.2$]{\includegraphics[width=0.35\textwidth]{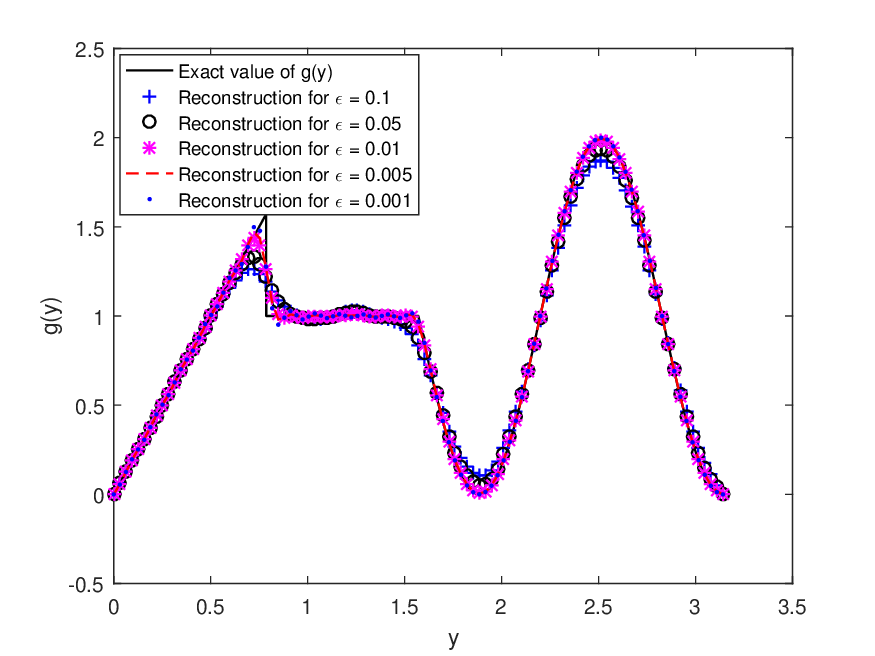}\label{fig02:sub3}}
\subfloat[ $\alpha=0.9,\beta=0.6,r=0.2$]{\includegraphics[width=0.35\textwidth]{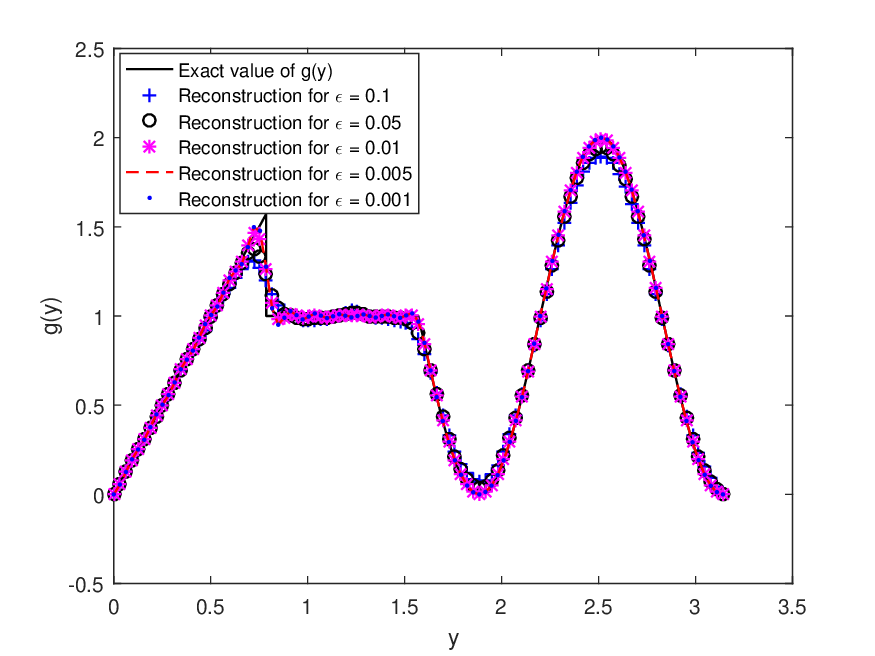}\label{fig02:sub4}}
\caption{Numerical inversions by the exponential-type Tikhonov regularization method for Example 2.}\label{fig02}
\end{figure}

\begin{figure}[H] 
	\centering
	\subfloat[ $\alpha=0.7,\beta=0.3,r=-1$]{\includegraphics[width=0.35\textwidth]{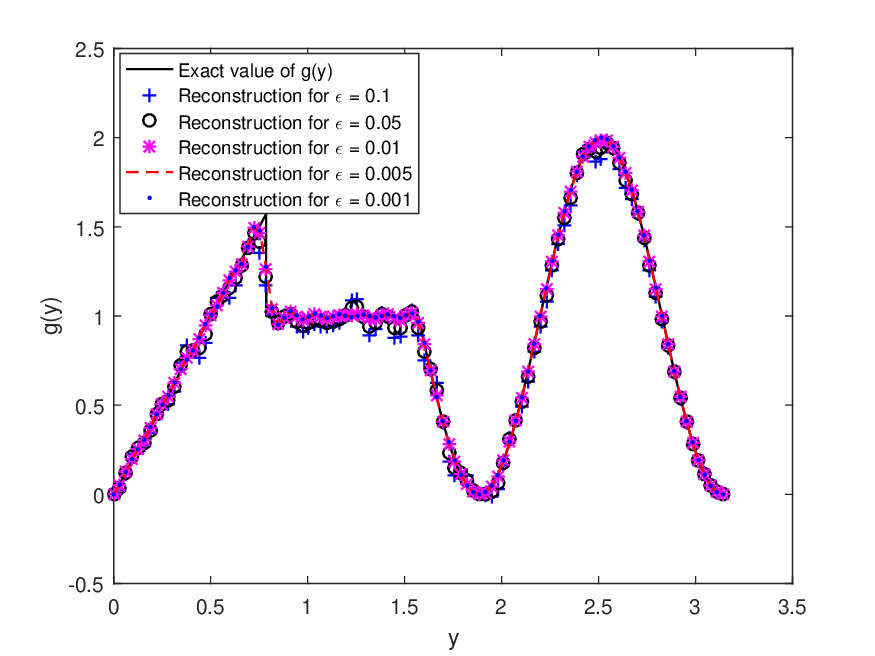}\label{fig021:sub1}}
	\subfloat[ $\alpha=0.9,\beta=0.6,r=-1$]{\includegraphics[width=0.35\textwidth]{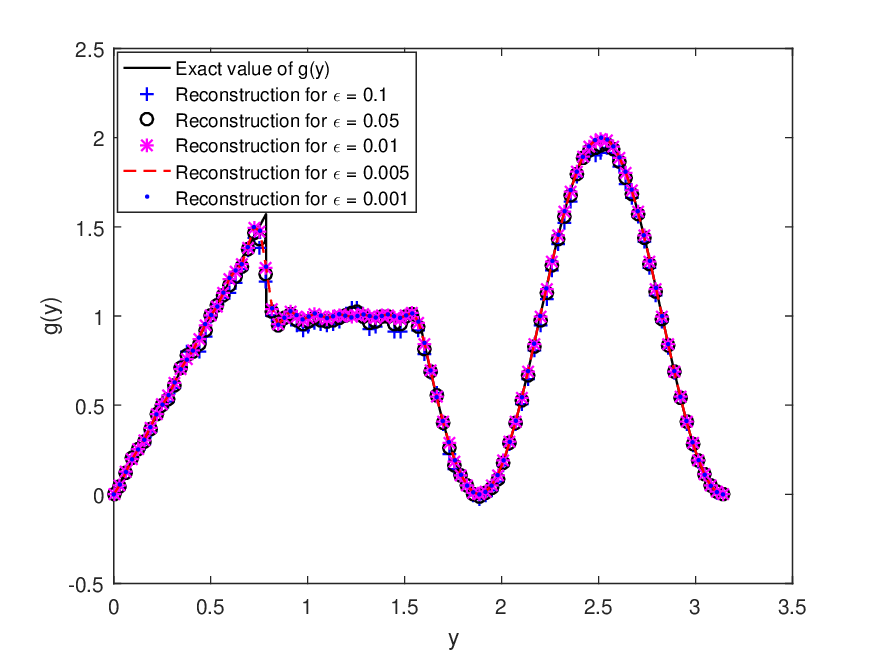}\label{fig021:sub2}}
	\\
	\subfloat[ $\alpha=0.7,\beta=0.3,r=0.2$]{\includegraphics[width=0.35\textwidth]{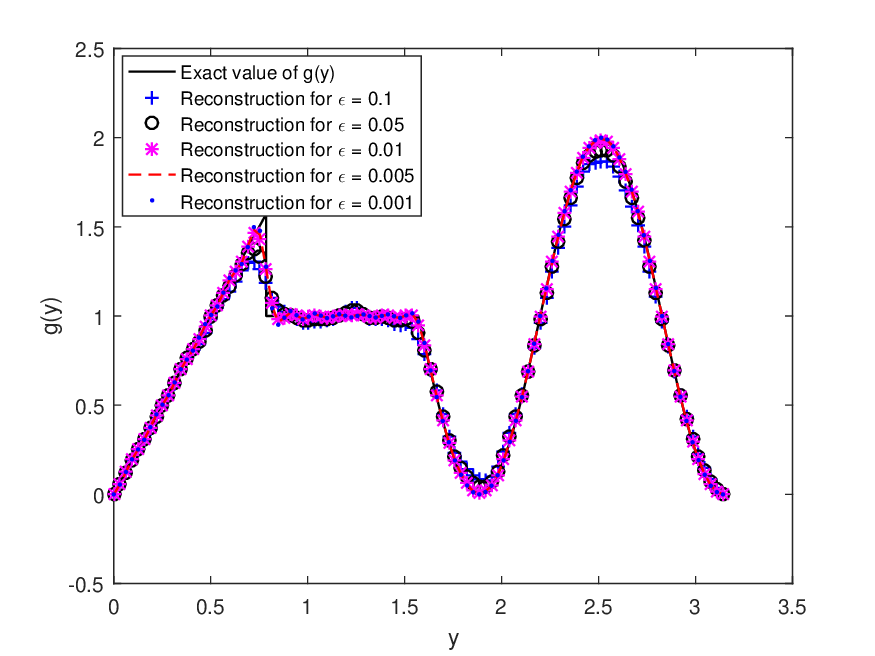}\label{fig021:sub3}}
	\subfloat[ $\alpha=0.9,\beta=0.6,r=0.2$]{\includegraphics[width=0.35\textwidth]{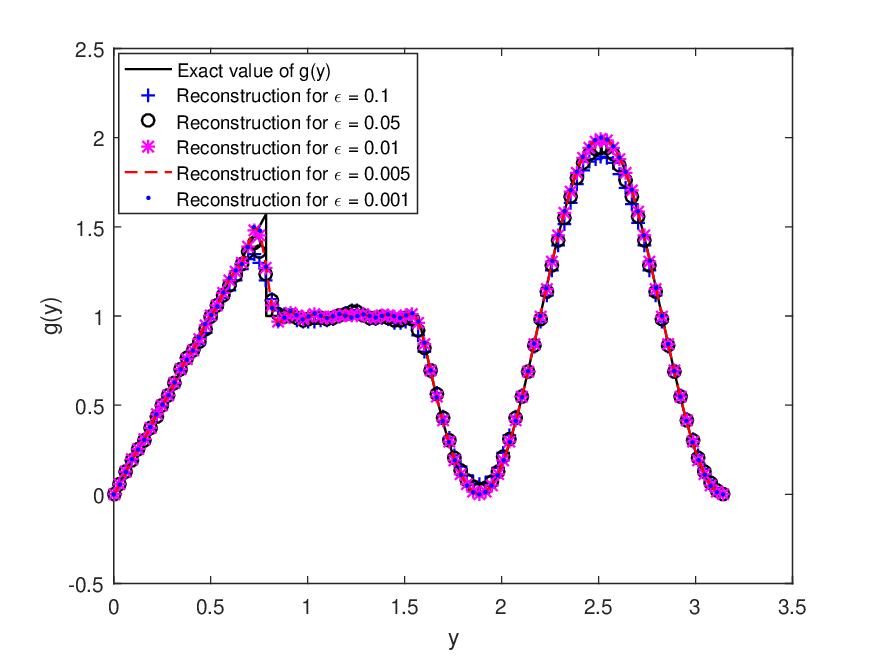}\label{fig021:sub4}}
	\caption{Numerical inversions by the exponential-type quasi-boundary regularization for Example 1.}\label{fig021}
\end{figure}

{\small
\begin{longtable}{ccccccccc}
\caption{Comparison of relative errors for different regularization methods in Example 2}
\label{tab02} \\
\hline
\multirow{2}{*}{$\alpha$} 
& \multirow{2}{*}{$\beta$} 
& \multirow{2}{*}{$\varepsilon$} 
& \multicolumn{2}{c}{Exponential Tikhonov} 
& \multicolumn{2}{c}{Exponential quasi-bound} 
& \multicolumn{2}{c}{General Tikhonov} \\
\cline{4-9}
& & 
& $r=-1$ 
& $r=0.2$ 
& $r=-1$ 
& $r=0.2$ 
& $r=0$ 
& $r=1$ \\
\hline
\endfirsthead

\hline
\multirow{2}{*}{$\alpha$} 
& \multirow{2}{*}{$\beta$} 
& \multirow{2}{*}{$\varepsilon$} 
& \multicolumn{2}{c}{Exponential Tikhonov} 
& \multicolumn{2}{c}{Exponential quasi-bound} 
& \multicolumn{2}{c}{General Tikhonov} \\
\cline{4-9}
& & 
& $r=-1$ 
& $r=0.2$ 
& $r=-1$ 
& $r=0.2$ 
& CTR 
& GTR: $r=1$ \\
\hline
\endhead

\hline
\endfoot

\hline
\endlastfoot

\multirow{5}{*}{0.9} 
& \multirow{5}{*}{0.6} 
& 0.1   
& 4.7986e-02 & 5.9185e-02 
& 4.8259e-02 & 5.4976e-02 
& 5.1300e-02 & 6.8002e-02 \\

& & 0.05   
& 3.2610e-02 & 4.0168e-02 
& 3.2461e-02 & 3.6836e-02 
& 3.4130e-02 & 4.6358e-02 \\

& & 0.01  
& 2.5230e-02 & 2.6313e-02 
& 2.5214e-02 & 2.5618e-02  
& 2.5304e-02 & 2.7302e-02 \\

& & 0.005  
& 2.4965e-02 & 2.4965e-02 
& 2.4965e-02 & 2.4965e-02  
& 2.4965e-02 & 2.4965e-02 \\

& & 0.001  
& 2.4930e-02 & 2.4930e-02 
& 2.4930e-02 & 2.4930e-02  
& 2.4930e-02 & 2.4930e-02 \\
\end{longtable}
}

The results in Figures \ref{fig01}--\ref{fig021} and Tables \ref{tab01}--\ref{tab02} show that the proposed exponential-type regularization methods are stable with respect to data noise and provide reliable reconstructions of the source term. For the smooth source in Example~1, the relative errors decrease as the noise level becomes smaller, and the exponential-type methods generally achieve better accuracy than the general Tikhonov method, where the case $r=0$ corresponds to the classical Tikhonov regularization and $r=1$ to the generalized Tikhonov regularization. In particular, the exponential Tikhonov method with $r=0.2$ gives the smallest errors for relatively large noise levels, while the exponential quasi-boundary value method with $r=-1$ becomes slightly more accurate for smaller noise levels. For the piecewise smooth source in Example~2, the relative errors first decrease as the noise level is reduced and then approach a stable error plateau, mainly due to the limited smoothness of the source function and the spectral truncation effect. In this case, the choice $r=-1$ is more suitable, and the exponential quasi-boundary value method is slightly more accurate than the exponential Tikhonov method for several noise levels. Overall, the proposed exponential-type Tikhonov and exponential quasi-boundary value regularization methods are competitive with, and in many cases more accurate than, the classical and generalized Tikhonov regularization methods.

%==============================================
\section{Conclusion} \label{sec:Conclusion}
%==============================================

An inverse source problem for a fractional-order elliptic equation with singular coefficients was studied in this paper. For the corresponding direct problem, a formal solution was derived and the well-posedness of the solution was established. For the inverse problem, the ill-posedness was analyzed and a H\"older-type conditional stability estimate was obtained in a Hilbert scale associated with exponential operators.

To reconstruct the unknown source term, two regularization methods were developed. First, an exponential-type Tikhonov regularization method was introduced, and convergence estimates were derived for both a priori and a posteriori choices of the regularization parameter. Motivated by the quasi-boundary value idea, an exponential-type quasi-boundary value regularization method was then proposed. This method incorporates the exponential regularization term directly into the modified boundary condition and provides an alternative way to stabilize the inverse source problem. Corresponding convergence estimates were also established. In addition, finite-dimensional spectral approximation results showed that both proposed methods can be applied to general square-integrable source terms, without requiring the exact source to belong to an exponential-type Hilbert scale; see Theorems \ref{the09} and \ref{the10}.

The numerical experiments confirmed the stability and effectiveness of the proposed methods for both smooth and piecewise smooth sources. The exponential-type methods generally achieved smaller relative errors than the classical Tikhonov method ($r=0$) and the generalized Tikhonov method ($r=1$). For the smooth source, positive values of $r$ were often more effective, whereas for the piecewise smooth source, the choice $r=-1$ gave better results. These results demonstrate the practical applicability of the proposed methods for noisy data.

Nevertheless, the implementation of the proposed methods relies on the eigen-decomposition of the elliptic operator and the numerical evaluation of the three-parameter Mittag-Leffler function $E_{\alpha,m,n}(z)$. These requirements may lead to computational challenges for large-scale, high-dimensional, or variable-coefficient problems. Future work will focus on developing more efficient numerical algorithms and exploring alternative regularization techniques for fractional inverse source problems with more complex structures.

\section*{Acknowledgments}
This work is supported by National Natural Science Foundation of China (12261004, 12171248), Guangdong Basic and Applied Basic Research Foundation(2025A1515012248), Innovation Team Project of Regular Universities in Guangdong Province (2025KCXTD037), Tertiary Education Scientific research project of Guangzhou Municipal Education Bureau (2024312515).

\medskip

\noindent
\textbf{Conflict of interest:} The authors declare that there is no conflict of interest regarding this
submitted manuscript.

\section*{Appendix}	

\textbf{Proof of Theorem \ref{the07}.}

Direct calculation yields
\begin{align*}
\left\|g_{\tau,r}^\delta(y)\!-\!g_{\tau,r}(y)\right\|&=\left\|\sum_{k=1}^\infty \frac{\frac{\eta_{2}}{\sqrt{\lambda_k}}l^{\alpha+\beta}E_{\alpha,1+\frac{\beta}{\alpha}, 1+\frac{2\beta}{\alpha}}\left(-\sqrt{\lambda_k}l^{\alpha+\beta}\right)} {\left(\frac{\eta_{2}}{\sqrt{\lambda_k}}l^{\alpha+\beta}E_{\alpha,1+\frac{\beta}{\alpha}, 1+\frac{2\beta}{\alpha}}\left(-\sqrt{\lambda_k}l^{\alpha+\beta}\right)\right)^2\!+\! \tau\exp(\lambda_k^r)}\left(\hat{h}_k^\delta\!-\!\hat{h}_k\right)\,\varphi_k(y)\right\|\\
&\leq\delta \sup_k\frac{\frac{\eta_{2}}{\sqrt{\lambda_k}}l^{\alpha+\beta} E_{\alpha,1+\frac{\beta}{\alpha},1+\frac{2\beta}{\alpha}}\left(-\sqrt{\lambda_k} l^{\alpha+\beta}\right)}{\left(\frac{\eta_{2}}{\sqrt{\lambda_k}}l^{\alpha+\beta} E_{\alpha,1+\frac{\beta}{\alpha},1+\frac{2\beta}{\alpha}}\left(-\sqrt{\lambda_k}l^{\alpha+\beta} \right)\right)^2+\tau\exp(\lambda_k^r)}.
\end{align*}
and 
\begin{align*}
\left\|g_{\tau,r}(y)\!-\!g(y)\right\|^2& =\left\|\sum_{k=1}^{\infty}\frac{\frac{\eta_{2}}{\sqrt{\lambda_k}}l^{\alpha\!+\!\beta} E_{\alpha,1\!+\!\frac{\beta}{\alpha},1\!+\!\frac{2\beta}{\alpha}} \left(-\sqrt{\lambda_k}l^{\alpha\!+\!\beta}\right)}{\left(\frac{\eta_{2}}{\sqrt{\lambda_k}} l^{\alpha\!+\!\beta}E_{\alpha,1\!+\!\frac{\beta}{\alpha},1\!+\!\frac{2\beta}{\alpha}} \left(-\sqrt{\lambda_k}l^{\alpha\!+\!\beta}\right)\right)^2\!+\!\tau\exp(\lambda_k^r)}\hat{h}_k\, \varphi_k(y)\!-\!\sum_{k=1}^{\infty}g_k\varphi_k(y)\right\|^2 \\
&=\left\|\sum_{k=1}^\infty\frac{\left(\frac{\eta_{2}}{\sqrt{\lambda_k}}l^{\alpha\!+\!\beta} E_{\alpha,1\!+\!\frac{\beta}{\alpha},1\!+\!\frac{2\beta}{\alpha}}\left(-\sqrt{\lambda_k} l^{\alpha\!+\!\beta}\right)\right)^2}{\left(\frac{\eta_{2}}{\sqrt{\lambda_k}}l^{\alpha\!+\!\beta} E_{\alpha,1\!+\!\frac{\beta}{\alpha},1\!+\!\frac{2\beta}{\alpha}}\left(-\sqrt{\lambda_k} l^{\alpha\!+\!\beta}\right)\right)^2\!+\!\tau\exp(\lambda_k^r)}g_k\varphi_k(y)\!-\!\sum_{k=1}^\infty g_k\varphi_k(y)\right\|^2 \\
%&=\left\|\sum_{k=1}^\infty\frac{\tau\exp(\lambda_k^r)}{\left(\frac{\eta_{2}}{\sqrt{\lambda_k}} l^{\alpha+\beta}E_{\alpha,1+\frac{\beta}{\alpha},1+\frac{2\beta}{\alpha}} \left(-\sqrt{\lambda_k}l^{\alpha+\beta}\right)\right)^2+\tau\exp(\lambda_k^r)}g_k\varphi_k(y)\right\|\\
&=\sum_{k=1}^\infty\left(\frac{\tau\exp(\lambda_k^r)}{\left(\frac{\eta_{2}}{\sqrt{\lambda_k}} l^{\alpha+\beta}E_{\alpha,1+\frac{\beta}{\alpha},1+\frac{2\beta}{\alpha}} \left(-\sqrt{\lambda_k}l^{\alpha+\beta}\right)\right)^2+\tau\exp(\lambda_k^r)}\right)^2\left|g_k\right|^2. 
\end{align*}

\textbf{Case (i): $r\le0$.} 
When $r\le0$, since $\lim\limits_{k\to\infty}\lambda_k=+\infty$, there exists a constant $C>0$ such that 
$1\le \exp(\lambda_k^r)\le C$ for all $k$. 
Using estimate \eqref{equ208} and absorbing all constants into $C$, we obtain
\begin{equation}\label{equ508_new}
\|g_{\tau,r}^{\delta}-g_{\tau,r}\|
\leq 
C\,\frac{\delta}{\sqrt{\tau}}.
\end{equation}

For $p\ge2$, we have
\begin{align*}
\|g_{\tau,r}-g\|^2
&=\sum_{k=1}^\infty
\left(
\frac{\tau\exp(\lambda_k^r)}
{\left(
\frac{\eta_{2}}{\sqrt{\lambda_k}}l^{\alpha+\beta}
E_{\alpha,1+\frac{\beta}{\alpha},1+\frac{2\beta}{\alpha}}
\!\left(-\sqrt{\lambda_k}l^{\alpha+\beta}\right)
\right)^2
+\tau\exp(\lambda_k^r)}
\right)^{\!2}\! g_k^2 \\
&\le 
\sum_{k=1}^\infty
\left(
\frac{C\,\tau\,\lambda_k^{2-p}}{1+\tau\lambda_k^2}
\right)^{\!2}\!
\lambda_k^{2p}g_k^2
\le 
C^2\,\tau^2\,\|g\|_{D((-L)^p)}^2.
\end{align*}
Hence,
\[
\|g_{\tau,r}-g\|\le C\,\tau\,\|g\|_{D((-L)^p)}.
\]
Combining this with \eqref{equ508_new} and choosing $\tau=\delta^{2/3}$ yields
\[
\|g_{\tau,r}^{\delta}-g\|
\le
C\left(1+\|g\|_{D((-L)^p)}\right)\delta^{2/3}.
\]

For $0<p<2$, note that
\[
\frac{C\,\tau\,\lambda_k^{2-p}}{1+\tau\lambda_k^2}
\le 
C\,\sup_{t>0}\frac{t^{2-p}}{1+\tau t^2}\,\tau
\le 
C\,\tau^{p/2}.
\]
Therefore,
\[
\|g_{\tau,r}-g\|\le C\,\tau^{p/2}\|g\|_{D((-L)^p)}.
\]
Combining with \eqref{equ508_new} and taking $\tau=\delta^{2/(p+1)}$ gives
\[
\|g_{\tau,r}^{\delta}-g\|
\le
C\left(1+\|g\|_{D((-L)^p)}\right)\delta^{p/(p+1)}.
\]

\textbf{Case (ii): $r>0$.} 
Using estimate \eqref{equ208} and absorbing all constants into a generic constant $C>0$, we obtain
\begin{align*}
\sup_{k\ge1}\frac{\frac{\eta_{2}}{\sqrt{\lambda_k}}l^{\alpha+\beta}
E_{\alpha,1+\frac{\beta}{\alpha},1+\frac{2\beta}{\alpha}}
\!\left(-\sqrt{\lambda_k}l^{\alpha+\beta}\right)}
{\left(\frac{\eta_{2}}{\sqrt{\lambda_k}}l^{\alpha+\beta}
E_{\alpha,1+\frac{\beta}{\alpha},1+\frac{2\beta}{\alpha}}
\!\left(-\sqrt{\lambda_k}l^{\alpha+\beta}\right)\right)^2
+\tau\exp(\lambda_k^r)}
&\le 
C\,\sup_{k\ge1}\frac{\lambda_k}{1+\tau\lambda_k^{2+r}}
\le C\,\tau^{-1/(2+r)}.
\end{align*}
Hence,
\begin{equation}\label{equ507_new}
\|g_{\tau,r}^{\delta}-g_{\tau,r}\|
\le 
C\,\frac{\delta}{\tau^{1/(2+r)}}.
\end{equation}

Since $r>0$ and $g\in D\!\left(\exp\!\left(\frac{(-L)^r}{2}\right)\right)$,  
it follows from the spectral representation and estimate \eqref{equ208} that
\begin{align*}
\|g_{\tau,r}-g\|^2
&=\sum_{k=1}^\infty
\left(
\frac{\tau\exp(\lambda_k^r)}
{\left(
\frac{\eta_{2}}{\sqrt{\lambda_k}}l^{\alpha+\beta}
E_{\alpha,1+\frac{\beta}{\alpha},1+\frac{2\beta}{\alpha}}
\!\left(-\sqrt{\lambda_k}l^{\alpha+\beta}\right)
\right)^2
+\tau\exp(\lambda_k^r)}
\right)^{\!2}\! |g_k|^2 \\[3pt]
&\le 
\sum_{k=1}^\infty
\left(
\frac{C\,\tau\,\lambda_k^{2}\exp(\lambda_k^r)}
{1+\tau\lambda_k^{2}\exp(\lambda_k^r)}
\right)^{\!2}\! |g_k|^2.
\end{align*}

Let $p>1$ be an arbitrary positive number. 
Noting that $\exp((p-1)\lambda_k^r/(p+1))$ grows faster than any power $\lambda_k^{4/(p+1)}$,  
we can bound the above by
\begin{align*}
\|g_{\tau,r}-g\|^2
&\le
C^2
\!\sup_{k\ge1}\!\left(
\frac{\tau\bigl(\lambda_k^2\exp(\lambda_k^r)\bigr)^{\frac{p}{p+1}}}
{1+\tau\lambda_k^2\exp(\lambda_k^r)}
\right)^{\!2}
\sum_{k=1}^\infty 
\exp(\lambda_k^r)\,|g_k|^2.
\end{align*}
A straightforward calculus argument yields
\[
\sup_{t>0}\frac{\tau\,t^{\frac{p}{p+1}}}{1+\tau t}
\le
C\,\tau^{\frac{1}{p+1}}.
\]
Thus,
\[
\|g_{\tau,r}-g\|
\le
C\,\tau^{\frac{1}{p+1}}\,\|g\|_{r,\mathrm{exp}}.
\]

Combining the two estimates and balancing the errors in 
\eqref{equ507_new}, we choose
\(
\tau=\delta^{\frac{(p+1)(2+r)}{(2+r)+(p+1)}},
\)
which minimizes the total error, yielding
\[
\|g_{\tau,r}^{\delta}-g\|
\le
C\!\left(1+\|g\|_{r,\mathrm{exp}}\right)
\delta^{\frac{(2+r)}{(2+r)+(p+1)}}.
\]
\hfill$\Box$

\medskip

\textbf{Proof of Theorem \ref{the08}.}

Let
\[
\sigma_k=
\frac{\eta_2}{\sqrt{\lambda_k}}l^{\alpha+\beta}
E_{\alpha,1+\frac{\beta}{\alpha},1+\frac{2\beta}{\alpha}}
\!\left(-\sqrt{\lambda_k}l^{\alpha+\beta}\right).
\]

\textbf{Case (i): $r\le0$.}
By the Morozov discrepancy principle,
\[
\rho\delta
=
\left\|
\sum_{k=1}^{\infty}
\frac{\tau e^{\lambda_k^r}}
{\sigma_k^2+\tau e^{\lambda_k^r}}
\hat h_k^\delta\varphi_k
\right\|.
\]
Since
\[
0<
\frac{\tau e^{\lambda_k^r}}
{\sigma_k^2+\tau e^{\lambda_k^r}}
<1
\]
and $\|\hat h^\delta-\hat h\|\le\delta$, we obtain
\[
(\rho-1)\delta
\le
\left\|
\sum_{k=1}^{\infty}
\frac{\tau e^{\lambda_k^r}}
{\sigma_k^2+\tau e^{\lambda_k^r}}
\hat h_k\varphi_k
\right\|.
\]
Using $\hat h_k=\sigma_k g_k$, it follows that
\[
(\rho-1)^2\delta^2
\le
\sum_{k=1}^{\infty}
\left(
\frac{\tau e^{\lambda_k^r}\sigma_k}
{\sigma_k^2+\tau e^{\lambda_k^r}}
\right)^2
|g_k|^2.
\]

For $r\le0$, $e^{\lambda_k^r}$ is uniformly bounded from above and below. Moreover, by estimate \eqref{equ208}, there exist constants $c,C>0$ such that
\[
\frac{c}{\lambda_k}\le \sigma_k\le \frac{C}{\lambda_k}.
\]
Hence,
\[
\frac{\tau e^{\lambda_k^r}\sigma_k\lambda_k^{-p}}
{\sigma_k^2+\tau e^{\lambda_k^r}}
\le
C\frac{\tau\lambda_k^{1-p}}
{1+\tau\lambda_k^2}.
\]
Therefore,
\[
(\rho-1)^2\delta^2
\le
C
\sum_{k=1}^{\infty}
\left(
\frac{\tau\lambda_k^{1-p}}
{1+\tau\lambda_k^2}
\right)^2
\lambda_k^{2p}|g_k|^2.
\]

If $p\ge1$, then
\[
\sup_{t\ge\lambda_1}
\frac{\tau t^{1-p}}{1+\tau t^2}
\le C\tau.
\]
If $0<p<1$, then a direct maximum estimate gives
\[
\sup_{t>0}
\frac{\tau t^{1-p}}{1+\tau t^2}
\le C\tau^{\frac{1+p}{2}}.
\]
Consequently,
\[
(\rho-1)^2\delta^2
\le
\begin{cases}
C C_5^2\tau^2, & p\ge1,\\
C C_5^2\tau^{p+1}, & 0<p<1.
\end{cases}
\]
Thus,
\[
\frac1{\sqrt{\tau}}
\le
\begin{cases}
C\left(\frac{C_5}{\rho-1}\right)^{\frac12}\delta^{-\frac12},
& p\ge1,\\
C\left(\frac{C_5}{\rho-1}\right)^{\frac1{p+1}}\delta^{-\frac1{p+1}},
& 0<p<1.
\end{cases}
\]

Using the perturbation estimate
\[
\|g_{\tau,r}^{\delta}-g_{\tau,r}\|
\le
C\frac{\delta}{\sqrt{\tau}},
\]
we obtain
\[
\|g_{\tau,r}^{\delta}-g_{\tau,r}\|
\le
\begin{cases}
C\left(\frac{C_5}{\rho-1}\right)^{\frac12}\delta^{\frac12},
& p\ge1,\\
C\left(\frac{C_5}{\rho-1}\right)^{\frac1{p+1}}\delta^{\frac{p}{p+1}},
& 0<p<1.
\end{cases}
\]

Next,
\[
\|g_{\tau,r}-g\|_{D((-L)^p)}
\le
\|g\|_{D((-L)^p)}
\le C_5.
\]
Moreover,
\[
K(g_{\tau,r}-g)
=
-\sum_{k=1}^{\infty}
\frac{\tau e^{\lambda_k^r}}
{\sigma_k^2+\tau e^{\lambda_k^r}}
\hat h_k\varphi_k .
\]
By adding and subtracting $\hat h_k^\delta$ and using the discrepancy principle, we get
\[
\|K(g_{\tau,r}-g)\|
\le
\rho\delta+\delta
=
(1+\rho)\delta.
\]
Applying the conditional stability estimate \eqref{equ411} with the index $2p$ in place of $p$ yields
\[
\|g_{\tau,r}-g\|
\le
C
C_5^{\frac1{p+1}}
(1+\rho)^{\frac{p}{p+1}}
\delta^{\frac{p}{p+1}}.
\]
Combining the last two estimates gives the assertion in Case (i).

\textbf{Case (ii): $r>0$.}
Since $g_{\tau,r}^{\delta}$ minimizes $J_{\tau,r}$, we have
\[
\|K g_{\tau,r}^{\delta}-\hat h^\delta\|^2
+\tau\|g_{\tau,r}^{\delta}\|_{r,\mathrm{exp}}^2
\le
\|K g-\hat h^\delta\|^2
+\tau\|g\|_{r,\mathrm{exp}}^2 .
\]
Using $\|K g-\hat h^\delta\|=\|\hat h-\hat h^\delta\|\le\delta$ and the discrepancy principle,
\[
\rho^2\delta^2+\tau\|g_{\tau,r}^{\delta}\|_{r,\mathrm{exp}}^2
\le
\delta^2+\tau\|g\|_{r,\mathrm{exp}}^2.
\]
Hence,
\[
\|g_{\tau,r}^{\delta}\|_{r,\mathrm{exp}}
\le
\|g\|_{r,\mathrm{exp}}
\le C_6.
\]
Since $r>0$, the embedding
\[
D\!\left(\exp\!\left(\frac{(-L)^r}{2}\right)\right)
\subset D((-L)^{p/2})
\]
holds for every $p>0$. Therefore,
\[
\|g_{\tau,r}^{\delta}-g\|_{D((-L)^{p/2})}
\le
2C_6.
\]
Furthermore,
\[
\|K(g_{\tau,r}^{\delta}-g)\|
\le
\|K g_{\tau,r}^{\delta}-\hat h^\delta\|
+
\|\hat h^\delta-\hat h\|
\le
(1+\rho)\delta.
\]
Applying the conditional stability estimate \eqref{equ411} yields
\[
\|g_{\tau,r}^{\delta}-g\|
\le
C\,
\|g_{\tau,r}^{\delta}-g\|_{D((-L)^{p/2})}^{\frac{2}{p+2}}\,
\|K(g_{\tau,r}^{\delta}-g)\|^{\frac{p}{p+2}}
\le
C\,(2C_6)^{\frac{2}{p+2}}\,
(1+\rho)^{\frac{p}{p+2}}\,
\delta^{\frac{p}{p+2}}.
\]
\hfill $\Box$.

%\section*{References}

\end{document}